\documentclass[10pt,A4paper]{article}

\RequirePackage{amsthm,amsmath,amsfonts,amssymb}
\RequirePackage[numbers]{natbib}
\RequirePackage[colorlinks,citecolor=blue,urlcolor=blue]{hyperref}
 \RequirePackage[scaled=.9]{helvet}
\usepackage{authblk}
\usepackage{mathtools}
\usepackage[english]{babel}

\usepackage{graphicx}
\usepackage[mathscr]{eucal}
\usepackage{enumerate}
\usepackage{cases}

\usepackage{diagbox}

\numberwithin{equation}{section}
\theoremstyle{plain}
\newtheorem{thm}{Theorem}[section]

\newtheorem{lem}{Lemma}[section]
\newtheorem{prop}{Proposition}[section]

\usepackage{color}

\newcommand{\E}{\mathbb{E}}
\newcommand{\PP}{\mathbb{P}}
\newcommand{\R}{\mathbb{R}}
\newcommand{\F}{\mathcal{F}}

\pdfstringdefDisableCommands

\topmargin -0.5cm
\textheight 22.5cm
 \addtolength\oddsidemargin{-0.4in}
\textwidth 405pt

\begin{document}

\title{Volatility and jump activity estimation in a stable Cox-Ingersoll-Ross model}

\author[1]{\textsc{Elise Bayraktar}}
\author[1]{\textsc{Emmanuelle Cl\'ement}}
\affil[1]{LAMA, Univ Gustave Eiffel, Univ Paris Est Creteil, CNRS, F-77447 Marne-la-Vall\'ee, France.}

\date{July, 26 2024}
\maketitle
\noindent
{\bf Abstract.}
We consider the parametric estimation of the volatility and jump activity in a stable Cox-Ingersoll-Ross ($\alpha$-stable CIR) model driven by a standard Brownian Motion and a non-symmetric stable L\'evy process with jump activity $\alpha \in (1,2)$. The main difficulties to obtain rate efficiency in estimating these quantities arise from the superposition of the diffusion component with jumps of infinite variation.  Extending the approach proposed in Mies \cite{MiesMoments},  we address the joint estimation of the volatility, scaling and jump activity parameters  from high-frequency observations of the process and prove that the proposed estimators are rate optimal up to a logarithmic  factor.

\noindent
\textbf{MSC $2020$}.  60G51; 60G52; 60J75; 62F12.

\noindent
\textbf{Key words}: Cox-Ingersoll-Ross model, L\'evy process; Parametric inference; Stable process; Stochastic Differential Equation.

\section{Introduction}
Jump processes are widely studied in the literature due to their many areas of application, especially in financial modeling where interest rates or asset price evolutions are often described by It\^{o} semimartingales. These processes are characterized by the superposition of a drift term, a diffusion part and a jump component and the estimation of the different components is the subject of specific studies depending on the available observations. In this paper we are interested in the estimation of the volatility and the jump activity from high-frequency observations. The estimation of such quantities has concentrated a lot of activity in the last decades and it is well known that the presence of jumps, and more precisely small jumps, makes the estimation of the volatility more difficult. Jump activity is characterized by the Blumenthal-Getoor index $\alpha \in (0,2)$ and when  jumps are of infinite variation, i.e. $\alpha >1$, the usual statistics to estimate the volatility such as the truncated realized quadratic variation have an asymptotic bias. On the other hand, the estimation of the jump activity is disturbed by the presence of a diffusion component since the process is dominated by the Brownian Motion. 

In this work, we focus on processes with infinite variation jumps. Many works have attempted to estimate the volatility or the jump activity in this context and we  briefly present  the most relevant. An efficient estimator of the volatility, with optimal rate of convergence $\sqrt{n}$, has been proposed in Jacod and Todorov \cite{JacodTodorovVolatility} in presence of infinite variation jumps with stable-like behavior. Their method is based on the local  estimation of the volatility by using the real part of the empirical characteristic function and the asymptotic bias is eliminated by combining statistics with different arguments in the characteristic function. Their estimator achieves rate and variance optimality for  $\alpha \in (1,2)$ if the jumps of the driving L\'evy process are symmetric.  For the truncated realized volatility, some debiasing procedures have been studied in Amorino and Gloter \cite{AmorinoGloter} and  in Cooper Boniece and al. \cite{FigueroaLopezVolatility}. By considering different truncation levels and with  a two-step debiasing procedure, efficiency for the estimation of the volatility is obtained in \cite{FigueroaLopezVolatility}, with the restriction $\alpha \in (1,8/5)$. Turning to the estimation of the jump activity, 
A\"{\i}t-Sahalia and Jacod in \cite{AitSahaliaJacodEstimationAlpha} have proposed an estimator of $\alpha$, in presence of a diffusion component, based on jump counting. For locally stable jumps, their estimator is consistent with  rate of convergence $n^{\alpha/10}$ ($n^{\alpha/8}$ in the stable case) but unfortunately  it fails to achieve the optimal rate $n^{\alpha/4} \log(n)^{1-\alpha/4}$ identified in A\"{\i}t-Sahalia and Jacod \cite{AitSahaliaJacodVitesseOptimale}. This estimation method is improved in Bull \cite{Bull} by  combining increments of the process with different time-scales, achieving the near optimal rate of convergence $n^{\alpha/4-\epsilon}$ for $\epsilon$ arbitrarily small. All these methods are non-parametric and apply to semimartingales but are not completely satisfactory. 
In a parametric framework, rate optimality in estimating the jump activity has been attained by Mies  \cite{MiesMoments} for a L\'evy process with characteristic triplet $(\mu, \sigma^2, \nu)$, assuming that the jump part is close to the superposition of $M$ stable processes. Using a method of moments, a joint estimator of the volatility $\sigma^2$, the jump activity indices $\alpha=(\alpha_m)_{1 \leq m \leq M}$ and some scaling  coefficients $r=(r^+_m,r^-_m)_{1 \leq m \leq M}$ is proposed. This procedure leads to an efficient estimator of the volatility (without debiasing procedure and without restriction on the jump activity) and to rate optimal estimators of the jump activity and scaling coefficients. 

In this paper, our aim is to study the joint parametric estimation of the volatility and jump activity of a stochastic differential equation with infinite variation jumps. We will focus our work on the stable Cox-Ingersoll-Ross model (stable CIR model) described by the stochastic equation
$$
  dX_t=(a - b X_t)dt + \sigma X_{t}^{1/2} dB_t + \delta^{1/ \alpha} X_{t-}^{1/ \alpha} dL^{\alpha}_t, 
$$
where $(B_t)$ is a standard Brownian Motion and $(L_t^{\alpha})$ a spectrally positive L\'evy process with jump activity $\alpha \in (1,2)$.
This model  is very popular in financial modeling (see Jiao et al. \cite{JiaoEtAl}, \cite{alpha-Heston})
and also presents some specific difficulties since the coefficients of the equation are not Lipschitz and not bounded away from zero. When $\sigma=0$ (pure-jump process), some estimation results  have been obtained in Li and Ma \cite{Li-Ma}, Yang \cite{Yang} and more recently in Bayraktar and Cl\'ement \cite{BC}. But when $\sigma \neq 0$, there are almost no estimation results in the literature for the stable CIR model and to our knowledge the only existing study concerns the estimation of the drift term $b$ proposed in Barczy et al. \cite{BarczyTransformeeLaplace} from continuous observations of the process. 
Inspired by the work of Mies \cite{MiesMoments}, we propose here a joint estimation method of the volatility coefficient $\sigma^2$, the scaling parameter $\delta$ and the jump activity index $\alpha$ from high-frequency observations of the process that leads to an efficient estimation of the volatility and attains rate optimality up to a factor $\log(n)$ for the estimation of $(\delta, \alpha)$. Our main results are derived in the case $n \Delta_n$ fixed, where $n$ is the number of observations and $\Delta_n$ the step between two consecutive observations (we recall that in this framework and in presence of a Brownian component, the drift parameters $(a,b)$ are not identifiable) but 
also in the long time asymptotics  $n \Delta_n \to + \infty$. 

The paper is organized as follows. In Section \ref{S:setting}, we present the model and notation. In Section \ref{S:main}, we describe the estimation method  and study the asymptotic properties of estimators in both cases $n \Delta_n$ fixed and $n \Delta_n \to + \infty$. Section \ref{S:auxi} contains some intermediate key results that are of intrinsic interest. Proofs are gathered in Section \ref{S:proof}.
\section{Setting and notation} \label{S:setting}
We consider $(B_t)_{t\geq0}$ a Brownian motion and $(L^\alpha_t)_{t\geq0}$ an independent stable L\'evy process with exponent $\alpha \in (1,2)$ defined on a filtered probability space $(\Omega, \F, (\F_t)_{t\geq0}, \PP)$, where $(\F_t)_{t\geq0}$ is the natural augmented filtration. We assume that $(L^\alpha_t)_{t\geq0}$ is a spectrally positive L\'evy process with characteristic function
\begin{equation}
    \E \left(e^{iz L_1^{\alpha}} \right) =  \exp \left( - |z|^\alpha \left( 1 - i \tan\left(\frac{\pi \alpha}{2}\right) \text{sgn}(z) \right) \right).
\end{equation}
This non-symmetric L\'evy process is strictly stable (see for example Sato \cite{Sato}) and admits the representation
$$
 L^\alpha_t = \int_0^t \int_{\mathbb{R}_+} z \tilde{N}(ds, dz),  \quad \tilde{N} = N - \overline{N},
$$
where  $N$ is a Poisson random measure with compensator $\overline{N}$ given by
\begin{equation}\label{eq:mesureLevy}
    \overline{N}(dt, dz) = dt F^{\alpha}(dz) \quad \text{for} \quad F^{\alpha}(dz) = \frac{c_\alpha}{z^{\alpha+1}} {\bf 1}_{(0, + \infty)} (z) dz.
\end{equation}
From Lemma 14.11 in Sato \cite{Sato}, we have
\begin{equation} \label{eq:c-alpha}
c_{\alpha}= -\frac{\alpha(\alpha-1)}{\Gamma(2-\alpha) \cos(\pi \alpha/2)}, \mbox{ with } \Gamma(\alpha)= \int_0^{\infty} x^{\alpha-1} e^{-x} dx, \quad \alpha>0.
\end{equation}

We will study the estimation of the parameter $\theta=(\sigma^2, \delta, \alpha)$ of the process (stable CIR model) given by the stochastic differential equation
\begin{equation} \label{eq:EDSCIR}
    dX_t=(a - b X_t)dt + \sigma X_{t}^{1/2} dB_t + \delta^{1/ \alpha} X_{t-}^{1/ \alpha} dL^{\alpha}_t, \quad X_0=x_0 >0.
\end{equation}
Assuming $a >0$ , $b \in \R$, $\delta> 0$, $\sigma > 0$, $x_0 > 0$ and $2a \geq \sigma^2$, we have according to Proposition 3.4. in Jiao et al \cite{JiaoEtAl} that $(X_t)_t$ is positive. 

We use the following notation. 
For any random variable $Z$, we write $\E_i(Z) = \E( Z | \F_{i \Delta_n})$ and for $p>0$, $\E^p( Z )=[\E(Z)]^p$ ($\E_i^p(Z)$ is defined accordingly). For any process $Y$, we denote the increments with step size $\Delta_n$ and the symmetrized increments by
\begin{equation*}
 \Delta _i^n Y = Y_{(i+1) \Delta_n} - Y_{i \Delta_n}, \quad \quad \Delta^{s,n}_{i}Y= \Delta^n_{i+1} Y - \Delta^n_{i} Y.
\end{equation*}
We write $\Longrightarrow$ for the convergence in distribution, and $\xRightarrow[]{\mathcal{L}-s}$ for the stable convergence in law.
Throughout the paper, we denote by C all positive constants (or $C_p$ if it depends on an auxiliary parameter $p$), which might change from line to line.

\section{Main results} \label{S:main}
From now on, we assume that we observe $(X_{i \Delta_n})_{0 \leq i \leq n}$ that solves \eqref{eq:EDSCIR} for the parameter values $(a_0,b_0, \theta_0)$ satisfying assumption {\bf H} below.

\noindent
{\bf H. } We assume that $\theta_0=(\sigma_0^2, \delta_0, \alpha_0) \in \Theta= (0, + \infty) \times (0, + \infty) \times (1,2)$
and $a_0 > \sigma_0^2$. 

Under {\bf H}, $(X_t)_{t \geq 0}$ is  positive as previously mentioned, but also satisfies that $\E(1/ X_t^2) < \infty$ (see Proposition \ref{Th:Moments}).  To estimate $\theta_0$, we propose a moment method based on the increments of $X$, that extends to stochastic equations the estimation method proposed in Mies \cite{MiesMoments} for L\'evy processes.   
For $f: \R \mapsto \R^3$, we consider the estimating function $F_n(\theta)$ defined by
\begin{equation} \label{E:estf}
F_n(\theta) = \frac{1}{n} \sum_{i=0}^{\lfloor n/2 \rfloor - 1} \left[ f\left(\frac{u_n}{\sqrt{\Delta_n}} \frac{\Delta^{s,n}_{2i} X}{\sqrt{X_{2i \Delta_n}}}\right) 
- P_n f(X_{2i \Delta_n}, \theta)\right],
\end{equation}
where the centering term $P_n f(x ,\theta)$, defined by \eqref{eq:defPnf},  is constructed on the following way. Considering  the Euler approximation of $\Delta^{s,n}_{2i} X$ omitting the drift term
 \begin{equation*}
  \Delta^{s,n}_{2i}  \overline{X}  = \sigma_0 X_{2i\Delta_n}^{1/2}\Delta^{s,n}_{2i} B  + \delta_0^{1/\alpha_0} X_{2i\Delta_n}^{1/\alpha_0}\Delta^{s,n}_{2i} L^{\alpha_0},
\end{equation*}
we approximate $\E_{2i}f\left(\frac{u_n}{\sqrt{\Delta_n}} \frac{\Delta^{s,n}_{2i} X}{\sqrt{X_{2i \Delta_n}}}\right) $ by $\E_{2i}f\left( \frac{u_n}{\sqrt{\Delta_n}} \frac{\Delta^{s,n}_{2i} \overline{X}}{\sqrt{X_{2i \Delta_n}}}\right)$. Now observing that $(\Delta^{s,n}_{2i} B , \Delta^{s,n}_{2i} L^{\alpha})$ has the distribution 
of $(\sqrt{2} \sqrt{\Delta_n} B_1, (2\Delta_n)^{1/ \alpha} S_1^{\alpha})$, where $S_1^{\alpha}$ is a symmetric stable random variable, independent of $B_1$, with characteristic function $\E e^{i z S_1^{\alpha}}=e^{-|z|^{\alpha}}$, we have
\begin{align} \label{eq:defPnf}
P_n f(x, \theta)  = 
\E_{2i}f\left( \frac{u_n}{\sqrt{\Delta_n}} \frac{\Delta^{s,n}_{2i} \overline{X}}{\sqrt{X_{2i \Delta_n}}}\right)=
  \E \left[ f \left(\sqrt{2} u_n \sigma B_1 + \frac{(2 \delta x)^{1/\alpha}}{\sqrt{x}} u_n \Delta_n^{1/\alpha - 1/2} S^{\alpha}_1 \right) \right].
\end{align}
At this stage, we assume that $u_n$ goes to zero as $n$ goes to infinity and $u_n / \sqrt{\Delta_n} $ goes to infinity, we will precise later the rate of convergence of the sequence $(u_n)$.  
We estimate the parameter $\theta_0$  by solving the estimating equation $F_n(\theta)=0$ for $F_n$ defined by \eqref{E:estf} with $f=(f_k)_{1 \leq k \leq 3}$, $f_1(x)=\cos(x)$, $f_2(x)=1- K(x)$, $f_3(x)=f_2(2x)$ and
\begin{equation*} \label{eq:defK}
K(x)=
\left\{
\begin{array}{l}
1 \; \text{ if} \; |x| \leq 1, \\ 
(1+ \exp(\frac{1}{2-|x|}-\frac{1}{1-|x|}) )^{-1} \; \text{ if} \; 1 < |x| < 2 ,\\
0 \; \text{ if} \; |x| \geq 2. 
\end{array} 
\right.
\end{equation*}
The choice of $f_1$ is natural to estimate $\sigma^2$ and has been used in Jacod and Todorov \cite{JacodTodorovVolatility}. For this choice,  
the expression of $P_n f_1$ is explicit, it is the real part of the characteristic function of $u_n \Delta^{s,n}_{2i}\overline{X}/\sqrt{\Delta_n X_{2i \Delta_n}}$
\begin{equation} \label{eq:Pncos}
P_n f_1(x, \theta)= \exp(-u_n^2 \sigma^2-2 \delta x^{1- \alpha/2} u_n^{\alpha} \Delta_n^{1- \alpha/2}).
\end{equation}
To estimate $\delta$ and $\alpha$, related to the jump component of the process, we consider here  smooth  truncations of the small jumps, with different truncation levels.
$P_n f_2$ and  $P_n f_3$ are not explicit (an asymptotic expansion is given in  Lemma \ref{L:expan}) but easy to compute numerically. 

The asymptotic properties of estimators depend on the connections between the number of observations $n$ and the discretization step $\Delta_n$ and we study separately the two cases $n \Delta_n$ fixed and $n \Delta_n \rightarrow + \infty$.

\subsection{Fixed observation window}

We consider that $n \Delta_n$ is fixed and without loss of generality, we assume that $n \Delta_n =1$.
We first prove the existence of a joint estimator with non-diagonal rate of convergence $\Lambda_n$  defined by
\begin{equation} \label{E:ratelambda}
 \Lambda_n(\theta) = \frac{1}{\sqrt{n}} \left(\begin{array}{ccc}
1 & 0 & 0 \\
0 & \frac{1}{ u_n^{\alpha/2} \Delta_n^{1/2 - \alpha/4}} & -\delta \ln(\frac{u_n}{\sqrt{\Delta_n}}) \frac{1}{ u_n^{\alpha/2} \Delta_n^{1/2 - \alpha/4}} \\
0 & 0 & \frac{1}{ u_n^{\alpha/2} \Delta_n^{1/2 - \alpha/4}} \end{array}\right).
\end{equation}
\begin{thm}\label{Th:Existence} 
We assume {\bf H},  $n\Delta_n = 1$ and  $u_n=1/[\ln(1/ \Delta_n)]^{p}$ with $p>1/2$. Then there exists a sequence of random vectors $(\hat{\theta}_n)$ solving $F_n(\hat{\theta}_n) = 0$ with probability tending to one, i.e. $\PP(F_n(\hat{\theta}_n) = 0) \to 1$.
This sequence satisfies $\hat{\theta}_n \to \theta_0$ in probability as $n \to \infty$, and
\[ \Lambda_n(\theta_0)^{-1} (\hat{\theta}_n - \theta_0)  \xRightarrow[n \to \infty]{\mathcal{L}-s} W(\theta_0)^{-1} \Sigma(\theta_0)^{1/2} \mathcal{N},\]
where $\mathcal{N}$ is a standard Gaussian variable independent of $\mathcal{F}_1$.  $\Sigma$ is the symmetric definite positive matrix and $W$ is the non-singular matrix respectively defined   by
\begin{equation} \label{E:sigma}
 \Sigma(\theta) =\frac{1}{2} \left(\begin{array}{ccc}
 2\sigma^4 & 0 & 0 \\
0 & c_{\alpha} \delta I(\alpha,X) \int_{\R} \frac{f_2^2 (v)}{|v|^{\alpha+1}}dv & c_{\alpha} \delta I(\alpha,X) \int_{\R} \frac{(f_2 f_3) (v)}{|v|^{\alpha+1}} dv 
\\
0 &c_{\alpha} \delta I(\alpha,X)\int_{\R} \frac{(f_2 f_3 )(v)}{|v|^{\alpha+1}} dv & c_{\alpha} \delta I(\alpha,X) \int_{\R} \frac{f_3^2 (v)}{|v|^{\alpha+1}} dv
\end{array}\right), 
\end{equation}
\begin{equation} \label{E:wtheta}
W(\theta) = \frac{1}{2}\left(\begin{array}{ccc}
1 & 0 & 0 \\
0 & -\psi(\alpha) I(\alpha,X)
& -\delta \partial_{\alpha}[ \psi(\alpha)I(\alpha,X)] 
\\
0 & - 2^{\alpha} \psi(\alpha) I(\alpha,X)
& -\delta \partial_{\alpha}[ 2^{\alpha}\psi(\alpha)I(\alpha,X)]
\end{array}\right),
\end{equation}
with  $I(\alpha,X)= \int_0^1 X_t^{1-\alpha/2}dt$,  $\psi(\alpha)=c_{\alpha} \int \frac{f_2(v)}{|v|^{\alpha+1}} dv$ and $c_{\alpha}$ given by \eqref{eq:c-alpha}. 
\end{thm}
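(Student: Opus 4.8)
The proof follows the classical $Z$-estimator (estimating-equation) strategy, adapted as in Mies~\cite{MiesMoments} to a non-diagonal rate. I would isolate three ingredients: (i) a stable central limit theorem for a suitably left-normalized version $S_nF_n(\theta_0)$ of the centered estimating function; (ii) the convergence of the normalized Jacobian $S_n\,\partial_\theta F_n(\theta_0)\,\Lambda_n(\theta_0)$ to the non-singular matrix $W(\theta_0)$, together with local control of $\partial_\theta F_n$ in a $\Lambda_n$-neighbourhood of $\theta_0$; and (iii) a quantitative inverse function argument combining (i)--(ii) into existence, consistency and a stochastic expansion of $\hat\theta_n-\theta_0$. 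Here $S_n$ denotes the diagonal left-normalization matching the scale of each coordinate of $F_n$. The moment bounds of Proposition~\ref{Th:Moments}, in particular $\E(1/X_t^2)<\infty$, are used throughout to accommodate the coefficients $X^{1/2},X^{1/\alpha}$ which degenerate near zero and are not Lipschitz.

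For step (i), writing $\zeta_i=f\big(\tfrac{u_n}{\sqrt{\Delta_n}}\Delta^{s,n}_{2i}X/\sqrt{X_{2i\Delta_n}}\big)$, I split
$$F_n(\theta_0)=\frac1n\sum_i\big(\zeta_i-\E_{2i}\zeta_i\big)+\frac1n\sum_i\big(\E_{2i}\zeta_i-P_nf(X_{2i\Delta_n},\theta_0)\big),$$
the second sum being the Euler bias, treated below. The first sum is a triangular array of $\F_{2(i+1)\Delta_n}$-martingale differences (the blocks $[2i\Delta_n,(2i+2)\Delta_n]$ being disjoint), and the symmetrized increment $\Delta^{s,n}_{2i}X$ is the device that cancels the leading, non-identifiable drift. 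To the martingale part I would apply a stable martingale CLT: its predictable bracket is $\frac1{n^2}\sum_i\E_{2i}\!\big[(\zeta_i-\E_{2i}\zeta_i)(\zeta_i-\E_{2i}\zeta_i)^\top\big]$, and exploiting the exact law $(\sqrt2\sqrt{\Delta_n}B_1,(2\Delta_n)^{1/\alpha}S_1^\alpha)$ together with the self-similarity of $S_1^\alpha$, the small-scale expansion $\E[f_k(\varepsilon S_1^\alpha)^2]\sim\varepsilon^\alpha c_\alpha\int_\R f_k^2(v)/|v|^{\alpha+1}\,dv$ produces the stable-tail integrals appearing in $\Sigma$, weighted by $X_{2i\Delta_n}^{1-\alpha/2}$. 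The Riemann-sum convergence $\frac1n\sum_i X_{2i\Delta_n}^{1-\alpha/2}\to\int_0^1 X_t^{1-\alpha/2}\,dt=I(\alpha,X)$ (valid since $n\Delta_n=1$ and the integrand has finite moments) yields the random, $\F_1$-measurable factor $I(\alpha,X)$; this is precisely why convergence is \emph{stable} in law, with limit mixing $I(\alpha,X)$ and a Gaussian $\mathcal N$ independent of $\F_1$. Verifying the vanishing of the bracket with $B$ (and with bounded orthogonal martingales) gives the $\F_1$-conditional Gaussianity. The block structure of $\Sigma$---the zero off-diagonal entries in the first row and column---reflects that $f_1$ probes the Brownian scale $u_n^2$ whereas $f_2,f_3$ probe the jump scale $u_n^\alpha\Delta_n^{1-\alpha/2}$, the two being asymptotically uncorrelated after rescaling.

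For step (ii), since the empirical term is free of $\theta$, $\partial_\theta F_n(\theta)=-\frac1n\sum_i\partial_\theta P_nf(X_{2i\Delta_n},\theta)$, which I would differentiate using the explicit formula \eqref{eq:Pncos} for $f_1$ and the asymptotic expansion of Lemma~\ref{L:expan} for $f_2,f_3$. Differentiating the power $u_n^\alpha\Delta_n^{1-\alpha/2}$ and the scaling $(2\delta x)^{1/\alpha}$ in $\alpha$ brings down the logarithm $\ln(u_n/\sqrt{\Delta_n})\to+\infty$; consequently the $\delta$- and $\alpha$-columns of the Jacobian are nearly collinear, which is exactly why a diagonal rate would fail and why the off-diagonal entry $-\delta\ln(u_n/\sqrt{\Delta_n})$ is built into $\Lambda_n$ in \eqref{E:ratelambda}. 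With this non-diagonal normalization, $S_n\,\partial_\theta F_n(\theta_0)\,\Lambda_n(\theta_0)$ converges to $W(\theta_0)$ of \eqref{E:wtheta}: the leading coefficients of the expansion give the $\psi(\alpha)I(\alpha,X)$ and $2^\alpha\psi(\alpha)I(\alpha,X)$ entries (the factor $2^\alpha$ from $f_3(\cdot)=f_2(2\cdot)$), while the chain rule through the $\alpha$-dependence of the scaling produces the $\partial_\alpha[\psi(\alpha)I(\alpha,X)]$ terms once the dominant log is absorbed by $\Lambda_n$. Non-singularity of $W(\theta_0)$ and a uniform Lipschitz-type bound on $\partial_\theta F_n$ over a shrinking $\Lambda_n$-neighbourhood (controlling the second derivatives) complete this step.

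Combining (i)--(ii) via a quantitative inverse function theorem (in the spirit of Mies~\cite{MiesMoments}) yields, with probability tending to one, a zero $\hat\theta_n$ of $F_n$ near $\theta_0$, together with $\hat\theta_n\to\theta_0$ in probability and the expansion
$$\Lambda_n(\theta_0)^{-1}(\hat\theta_n-\theta_0)=-\big(S_n\,\partial_\theta F_n(\theta_0)\,\Lambda_n(\theta_0)\big)^{-1}S_nF_n(\theta_0)+o_\PP(1).$$
Since $S_n\partial_\theta F_n(\theta_0)\Lambda_n(\theta_0)\to W(\theta_0)$ and $S_nF_n(\theta_0)\xRightarrow[]{\mathcal{L}-s}\Sigma(\theta_0)^{1/2}\mathcal N$, Slutsky's lemma for stable convergence gives the limit $-W(\theta_0)^{-1}\Sigma(\theta_0)^{1/2}\mathcal N$, which coincides in law with the announced $W(\theta_0)^{-1}\Sigma(\theta_0)^{1/2}\mathcal N$ by symmetry of the conditionally Gaussian factor. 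The main obstacle I anticipate is the Euler-bias term $\frac1n\sum_i(\E_{2i}\zeta_i-P_nf(X_{2i\Delta_n},\theta_0))$: one must show it is negligible at the CLT rate, i.e. bound the error of omitting the drift and of freezing $X^{1/2},X^{1/\alpha}$ at the left endpoint over each block, despite the coefficients being neither Lipschitz nor bounded below. This is exactly where the slowly vanishing choice $u_n=1/[\ln(1/\Delta_n)]^p$ with $p>1/2$ is decisive---it keeps the Brownian argument small enough to kill the Euler error at rate $\sqrt n$ while $u_n/\sqrt{\Delta_n}\to\infty$ still resolves the jumps---and where the moment estimates of Proposition~\ref{Th:Moments} and the auxiliary results of Section~\ref{S:auxi} carry the analysis.
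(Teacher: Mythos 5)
Your proposal follows essentially the same route as the paper: the paper's proof is exactly the combination of (a) the stable martingale CLT $A_n(\theta_0)F_n(\theta_0)\xRightarrow{\mathcal{L}-s}\Sigma(\theta_0)^{1/2}\mathcal{N}$ (Proposition \ref{Th:TCL}, proved by replacing $\rho_{2i}^n$ by $\overline{\rho}_{2i}^n$ via Lemma \ref{L:ramenerapplati}, then checking the Jacod--Shiryaev conditions, with the orthogonality step in Lemma \ref{L:AccM}), (b) the uniform convergence $A_n(\theta)\nabla_\theta F_n(\theta)\Lambda_n(\theta)\to W(\theta)$ over shrinking neighbourhoods (Proposition \ref{Th:LFGN}), and (c) the estimating-equation machinery of Mies and Podolskij (their Theorem A.2, conditions (E.1)--(E.3) with $r_n=1/\ln(\Delta_n)^2$), which is precisely your quantitative inverse-function step, including the sign/symmetry remark on the limit. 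The only point you gloss over is that the Jacobian limit for $f_2,f_3$ cannot be obtained by differentiating the expansion of Lemma \ref{L:expan} term by term: the paper instead computes $\nabla_\theta P_n K$ exactly by Fourier duality and then expands (Lemmas \ref{L:gradientK} and \ref{L:gradientf}), a technical rather than conceptual difference.
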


We deduce from Theorem \ref{Th:Existence} that $\sqrt{n}(\hat{\sigma}^2_n - \sigma_0^2) \to \mathcal{N}(0, 4 \sigma_0^4)$, so the estimator is rate optimal and achieves  the optimal variance up to a factor 2, which is explained by the use of the symmetrized increments  $\Delta^{s,n}_{2i} X$. Optimality for the marginal estimation of $(\delta, \alpha)$ has been discussed in A\"it-Sahalia and Jacod \cite{AitSahaliaJacodVitesseOptimale}. For the process $X_t= \sigma B_t + \delta L_T^{\alpha}$ with $\sigma>0$, they show that the optimal  rates of convergence  for the estimation of $\delta$ and $\alpha$  are respectively $
1/ ((\log(1/ \Delta_n)^{\alpha/4} \Delta_n^{\alpha/4})$ and $\log(1/ \Delta_n)/ ((\log(1/ \Delta_n)^{\alpha/4}\Delta_n^{\alpha/4})$. But from Mies \cite{MiesMoments} a diagonal normalisation with marginal optimality for each parameter leads to a singular information matrix, suggesting that the optimal rate of convergence in the joint estimation of $(\delta, \alpha)$ is non-diagonal with slower rate of convergence by a factor $\log(1/\Delta_n)$ and probably achieved by the choice $u_n=1/(\log(1/ \Delta_n))^{1/2}$ in \eqref{E:ratelambda}. This choice is theoretically possible up to a constant depending on the true value of the parameters since the proof of Lemma \ref{L:expan} still works with $u_n=1/(4 \sigma_0(\log(1/ \Delta_n))^{1/2})$ (see \eqref{eq:BI1}). This choice is not possible in practice, this is why we assume the near optimal condition $u_n=1/(\log(1/ \Delta_n))^p$ with $p>1/2$ in Theorem \ref{Th:Existence}. 

We now assume that $\alpha_0$ is known and only consider the estimation of $\beta=(\sigma^2, \delta)$ by solving $F_n(\sigma^2, \delta, \alpha_0)=0$, where $F_n$ is defined by \eqref{E:estf} with $f=(f_1,f_2)$. In that case,
we can also prove a uniqueness result, which implies that any solution to the estimating equation is consistent. 
\begin{thm}\label{Th:Unicité}
We assume that $\alpha_0$ is known and that $(\sigma_0^2, \delta_0)$ belongs to the interior of  a compact subset of $(0, + \infty) \times (0, + \infty)$. Then under the assumptions of Theorem \ref{Th:Existence}  any sequence $(\hat{\sigma}^2_n, \hat{\delta}_n)$ that solves $ F_n(\sigma^2, \delta,\alpha_0) = 0$ converges in probability to $(\sigma_0^2, \delta_0)$. Such a sequence exists and is unique in the sense that if $(\tilde{\sigma}^{2}_n, \tilde{\delta}_n)$ is another sequence that solves $ F_n(\sigma^2, \delta,\alpha_0) = 0$ then $\PP( (\hat{\sigma}^2_n, \hat{\delta}_n) \neq (\tilde{\sigma}^{2}_n, \tilde{\delta}_n))$ goes to zero as n goes to infinity. This sequence obviously satisfies the asymptotic properties of Theorem \ref{Th:Existence}.
\end{thm}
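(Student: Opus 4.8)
The plan is to use that, once $\alpha_0$ is fixed, the map $\beta=(\sigma^2,\delta)\mapsto F_n(\beta,\alpha_0)$ is, given the observations, a deterministic function whose $\beta$-dependence comes only from the centering term, since the data part $\frac1n\sum_i f\big(\frac{u_n}{\sqrt{\Delta_n}}\frac{\Delta^{s,n}_{2i}X}{\sqrt{X_{2i\Delta_n}}}\big)$ in \eqref{E:estf} does not involve $\beta$. Both consistency of any solution and uniqueness will then follow from a single ingredient: after a suitable rescaling, the Jacobian of $\beta\mapsto F_n(\beta,\alpha_0)$ converges uniformly on a convex compact neighbourhood $\mathcal K$ of $\beta_0=(\sigma_0^2,\delta_0)$ to a non-singular (random, $\F_1$-measurable) limit.

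First I would compute this Jacobian. From the explicit expression \eqref{eq:Pncos}, $\partial_{\sigma^2}P_nf_1=-u_n^2P_nf_1$ and $\partial_\delta P_nf_1=-2x^{1-\alpha_0/2}u_n^{\alpha_0}\Delta_n^{1-\alpha_0/2}P_nf_1$, with $P_nf_1\to1$ uniformly on $\mathcal K$ because its exponent tends to $0$; for the second coordinate I would differentiate the expansion of Lemma \ref{L:expan} to obtain that $\partial_\delta P_nf_2$ is of order $u_n^{\alpha_0}\Delta_n^{1-\alpha_0/2}$ with the expected leading constant, while $\partial_{\sigma^2}P_nf_2$ stays negligible relative to $u_n^{\alpha_0}\Delta_n^{1-\alpha_0/2}$ (the Brownian contribution remains below the truncation level of $f_2$). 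Introducing $D_n=\mathrm{diag}(u_n^2,u_n^{\alpha_0}\Delta_n^{1-\alpha_0/2})$, the Riemann-sum convergence $\frac1n\sum_i X_{2i\Delta_n}^{1-\alpha_0/2}\to\frac12 I(\alpha_0,X)$ and the elementary fact $u_n^{\alpha_0-2}\Delta_n^{1-\alpha_0/2}\to0$ (valid since $u_n/\sqrt{\Delta_n}\to\infty$), which kills the off-diagonal term of the first row, I would get
\[ D_n^{-1}\,\partial_\beta F_n(\beta,\alpha_0)\xrightarrow[n\to\infty]{}\ \frac12\begin{pmatrix}1&0\\0&-\psi(\alpha_0)I(\alpha_0,X)\end{pmatrix}, \]
uniformly in $\beta\in\mathcal K$; this is exactly the $(\sigma^2,\delta)$-block of $W(\theta_0)$ in \eqref{E:wtheta}, and it is non-singular almost surely since $\psi(\alpha_0)\neq0$ and $I(\alpha_0,X)>0$.

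Uniqueness is then a mean-value argument: for $\beta_1\neq\beta_2$ in $\mathcal K$,
\[ D_n^{-1}\big(F_n(\beta_1,\alpha_0)-F_n(\beta_2,\alpha_0)\big)=\Big(\int_0^1 D_n^{-1}\partial_\beta F_n\big(\beta_2+t(\beta_1-\beta_2),\alpha_0\big)\,dt\Big)(\beta_1-\beta_2), \]
and since the integrated matrix converges uniformly to the non-singular limit above, the right-hand side is non-zero for $n$ large, so with probability tending to one $\beta\mapsto F_n(\beta,\alpha_0)$ is injective on $\mathcal K$; hence any two solutions lying in $\mathcal K$ coincide. For existence and consistency I would check that $D_n^{-1}F_n(\theta_0)\to0$ in probability — which is part of the asymptotic analysis of $F_n(\theta_0)$ already underlying Theorem \ref{Th:Existence} — so that $\beta_0$ is an approximate zero; combined with the uniform non-degeneracy of the rescaled Jacobian, a quantitative inverse function theorem yields a zero $\hat\beta_n\in\mathcal K$, necessarily unique, with $\hat\beta_n\to\beta_0$ in probability. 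Being the restriction to $\alpha=\alpha_0$ of the estimating equation of Theorem \ref{Th:Existence}, this $\hat\beta_n$ inherits the stated asymptotic distribution.

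The main obstacle will be the second step, i.e. promoting the pointwise expansion of Lemma \ref{L:expan} to a uniform-in-$\beta$ expansion of $P_nf_2$ and of its partial derivatives over $\mathcal K$, with remainders controlled uniformly; in particular, showing that $\partial_{\sigma^2}P_nf_2$ is genuinely negligible compared with $u_n^{\alpha_0}\Delta_n^{1-\alpha_0/2}$ — so that the rescaled Jacobian is asymptotically diagonal and non-singular — is delicate because of the very different scales attached to the two estimating equations and to the two parameters.
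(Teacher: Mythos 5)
Your skeleton coincides with the paper's own proof: the paper also rescales the restricted estimating function by $\tilde{A}_n(\alpha_0)=\mathrm{diag}\bigl(u_n^{-2},\,u_n^{-\alpha_0}\Delta_n^{-(1-\alpha_0/2)}\bigr)$ (your $D_n^{-1}$), uses that $\tilde{A}_nF_n(\sigma_0^2,\delta_0,\alpha_0)\to 0$ (a consequence of Proposition \ref{Th:TCL}, exactly as you argue) and that the rescaled Jacobian converges uniformly on compacts to the non-singular diagonal limit $\mathrm{diag}\bigl(\tfrac12,\,-\tfrac12\psi(\alpha_0)I(\alpha_0,X)\bigr)$, which is your limit matrix. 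The only packaging difference is the last step: the paper cites Theorem 2.7.a) of Jacod and S{\o}rensen \cite{JacodSorensenUniforme} applied to the limit function $\tilde F(\sigma^2,\delta)=\bigl(\tfrac12(\sigma^2-\sigma_0^2),\ \tfrac12\psi(\alpha_0)I(\alpha_0,X)(\delta_0-\delta)\bigr)$, whose unique root is $(\sigma_0^2,\delta_0)$, whereas you run the mean-value/injectivity argument and an inverse function theorem by hand. That substitution is legitimate, provided you take $\mathcal K$ convex and large enough to contain the compact set of the statement (e.g.\ its convex hull), so that every solution covered by the theorem lies in $\mathcal K$.

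The genuine gap is the one you flag yourself, and your proposed route would not close it: you cannot obtain $\partial_\delta P_nf_2$ or $\partial_{\sigma^2}P_nf_2$ by ``differentiating the expansion of Lemma \ref{L:expan}'' --- an expansion of $P_nf_2$ with a $o(\cdot)$ remainder gives no control whatsoever on the derivatives of that remainder. The paper fills precisely this hole with a separate computation (Lemma \ref{L:gradientK}): writing $f_2=1-K$ and using $\F\F K=2\pi K$, one gets the exact identities $\partial_{\sigma^2}P_nK=u_n^2\,P_nK''$ and $\partial_\delta P_nK=-x^{1-\alpha/2}u_n^\alpha\Delta_n^{1-\alpha/2}\tfrac1\pi\int\F K(y)|y|^\alpha\F g_{n,\theta,x}(y)\,dy$, from which Lemma \ref{L:gradientf} extracts the uniform limits (via the identity \eqref{eq:Fourier} relating $\tfrac1\pi\int\F K(y)|y|^\alpha dy$ to $\psi(\alpha)$). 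Note also a quantitative subtlety for your $(2,1)$ entry: with $v_n=u_n^{-\alpha_0/2}\Delta_n^{-(1/2-\alpha_0/4)}$, the bound of Lemma \ref{L:gradientf}(i) controls only $v_n\partial_{\sigma^2}P_nf_2$ by $Cu_n(1+\sqrt x)$, which is \emph{not} enough for the normalization $v_n^2$ appearing in your Jacobian, since $v_nu_n\to\infty$. The statement you need, $|v_n^2\partial_{\sigma^2}P_nf_2|\le Cu_n^2(1+x)\to0$ uniformly, does hold, but it must be read off from the proof rather than the statement: $\partial_{\sigma^2}P_nf_2=-u_n^2P_nK''$ and Lemma \ref{L:expan} applied to $K''$ (even, bounded, vanishing on $[-1,1]$) give $|P_nK''|\le Cu_n^{\alpha_0}\Delta_n^{1-\alpha_0/2}(1+x)$. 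With these ingredients supplied, your argument closes and is equivalent to the paper's.
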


\subsection{Ergodic case}

We consider in this section the long time asymptotics $n \Delta_n \to + \infty$ and we restrict our study to the  case $b_0 > 0$. With this assumption, the process $(X_t)_{t \geq 0}$ is geometrically ergodic (we refer to Li and Ma \cite{Li-Ma} Proposition 2.2 and Theorem 2.5) and the stationary distribution $\pi_{0}$ has Laplace transform
\[ \int_0^{+ \infty} e^{-\lambda y} \pi_{0}(dy) = \exp \left( - \int_0^\lambda \frac{a_0 x}{\frac{\sigma_0^2}{2} x^2 + \frac{\overline{\delta_0}^{\alpha_0}}{\alpha_0} + b_0 x} dx \right) ,\]
with $\overline{\delta_0}^{\alpha_0}=\delta_0 \alpha_0/| \cos(\frac{\pi \alpha_0}{2}) |$.
Since the Laplace transform is $\mathcal{C}^1$ at zero,  we check easily that $\int_0^{\infty} x \pi_{0}(dx) < + \infty$,  and setting $\overline{I}(\alpha)=\int_0^{\infty} x^{1-\alpha/2} \pi_{0}(dx)$ we deduce that $\alpha \mapsto \overline{I}(\alpha)$ is derivable for $\alpha \in (1,2)$.

\begin{thm}\label{Th:ExistenceTempslong} 
We assume {\bf H} and $b_0>0$,  $n \Delta_n \to + \infty$ ,  $u_n=1/[\ln(1/ \Delta_n)]^{p}$ with $p>1/2$, and the additional condition $\sqrt{n} \Delta_n^{1/\alpha_0 + \alpha_0/4 - 1/2-\epsilon_0} \to 0$ for $\epsilon_0>0$. Then the result of Theorem \ref{Th:Existence} holds  with
\[ \Lambda_n(\theta_0)^{-1} (\hat{\theta}_n - \theta_0)  \xRightarrow[n \to \infty]{} \overline{W}(\theta_0)^{-1} \overline{\Sigma}(\theta_0)^{1/2} \mathcal{N},\]
where $\mathcal{N}$ is a standard Gaussian variable.  $\overline{\Sigma}$ is the symmetric definite positive (deterministic) matrix  and $\overline{W}$ is the (deterministic) non-singular matrix respectively defined  by \eqref{E:sigma} and \eqref{E:wtheta} replacing  $I(\alpha,X)$ by $\overline{I}(\alpha)= \int_0^{+ \infty} x^{1-\alpha/2} \pi_{0}(dx)$.
\end{thm}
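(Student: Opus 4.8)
The plan is to reproduce the M-estimation scheme underlying Theorem \ref{Th:Existence}, substituting the occupation-time (Riemann-sum) limits by their ergodic counterparts and verifying that the looser coupling between $n$ and $\Delta_n$ does not spoil the negligibility of the remainder terms. Writing $\xi_i(\theta)= f\big(\frac{u_n}{\sqrt{\Delta_n}}\frac{\Delta^{s,n}_{2i}X}{\sqrt{X_{2i\Delta_n}}}\big)-P_nf(X_{2i\Delta_n},\theta)$ so that $F_n(\theta)=\frac1n\sum_i\xi_i(\theta)$, I would start from the exact identity $F_n(\hat\theta_n)=0$ and expand to first order around $\theta_0$: with an intermediate point $\bar\theta_n$, the quantity $\Lambda_n(\theta_0)^{-1}(\hat\theta_n-\theta_0)$ is then governed by $-\Lambda_n(\theta_0)^{-1}[D_\theta F_n(\bar\theta_n)]^{-1}F_n(\theta_0)$. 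As in the fixed-window case, the two analytic inputs are a central limit theorem for $F_n(\theta_0)$ and the convergence of the suitably normalized Jacobian $D_\theta F_n$, combined through this expansion.

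For the central limit theorem I would split $F_n(\theta_0)=M_n+R_n$, where $M_n=\frac1n\sum_i\zeta_i$ with $\zeta_i=f(\cdot)-\E_{2i}f(\cdot)$ is a sum of martingale increments for the filtration $(\F_{2i\Delta_n})_i$, and $R_n$ collects the two approximation errors, namely the replacement of $\Delta^{s,n}_{2i}X$ by its drift-free Euler proxy $\Delta^{s,n}_{2i}\overline X$ and the replacement of the true conditional expectation by $P_nf$. A martingale-array CLT then pins down the limit through the normalized conditional covariance $\frac1n\sum_i\E_{2i}[\zeta_i\zeta_i^\top]$, whose nontrivial (jump) block reduces to sums of the form $\frac1n\sum_i X_{2i\Delta_n}^{1-\alpha_0/2}$ multiplied by the integrals appearing in \eqref{E:sigma}. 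Here lies the sole essential difference with Theorem \ref{Th:Existence}: rather than a Riemann sum over $[0,1]$ converging to the random $I(\alpha_0,X)=\int_0^1 X_t^{1-\alpha_0/2}\,dt$, this is now a time average over $[0,n\Delta_n]$ with $n\Delta_n\to+\infty$, which by the geometric ergodicity of $X$ converges in probability to the deterministic $\overline I(\alpha_0)=\int_0^\infty x^{1-\alpha_0/2}\pi_0(dx)$, finite thanks to Proposition \ref{Th:Moments} and the integrability of $x$ under $\pi_0$. Because the limiting covariance $\overline\Sigma(\theta_0)$ is deterministic, the stable convergence of Theorem \ref{Th:Existence} collapses to ordinary convergence in distribution to the Gaussian limit $\overline\Sigma(\theta_0)^{1/2}\mathcal N$.

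The Jacobian step is treated identically: using the asymptotic expansion of $P_nf$ from Lemma \ref{L:expan}, the normalized derivative is written through the same averages $\frac1n\sum_i X_{2i\Delta_n}^{1-\alpha_0/2}$ and their $\alpha$-derivatives, which again converge ergodically and thereby produce $\overline W(\theta_0)$ in place of $W(\theta_0)$, with non-singularity inherited from the fixed-window computation. Existence of a solution of $F_n(\hat\theta_n)=0$ with probability tending to one, together with its consistency, is obtained by transposing the argument of Theorem \ref{Th:Existence} to the deterministic limit, after which the expansion yields $\Lambda_n(\theta_0)^{-1}(\hat\theta_n-\theta_0)\Longrightarrow \overline W(\theta_0)^{-1}\overline\Sigma(\theta_0)^{1/2}\mathcal N$.

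The main obstacle will be the control of the remainder $R_n$. In the fixed-window regime the constraint $n\Delta_n=1$ forces $\sqrt n\sim\Delta_n^{-1/2}$, so the Euler and conditional-expectation errors balance automatically against the martingale normalization; in the ergodic regime $\sqrt n$ may grow arbitrarily fast relative to powers of $\Delta_n$, and one must establish $\Lambda_n(\theta_0)^{-1}R_n\to0$ directly. This is precisely the role of the extra hypothesis $\sqrt n\,\Delta_n^{1/\alpha_0+\alpha_0/4-1/2-\epsilon_0}\to0$: the dominant term of $R_n$, stemming from the interplay of the diffusive scale $\sqrt{\Delta_n}$, the stable scale $\Delta_n^{1/\alpha_0}$ and the truncation level $u_n$, carries a leading power of $\Delta_n$ that, after application of $\Lambda_n(\theta_0)^{-1}$, produces exactly the combination $\sqrt n\,\Delta_n^{1/\alpha_0+\alpha_0/4-1/2}$, the slack $\epsilon_0$ absorbing the logarithmic factors coming from $u_n$. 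Keeping the ergodic averages entering these bounds under control by the moment estimates of Proposition \ref{Th:Moments} is what makes the remainder uniformly negligible.
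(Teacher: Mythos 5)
Your proposal is correct and follows essentially the same route as the paper: the paper reduces the theorem to conditions (E.1)--(E.3) of Theorem A.2 in Mies--Podolskij (the estimating-equation machinery you unfold by hand via the first-order expansion), with (E.1) given by the ergodic-case CLT of Proposition \ref{Th:TCL} (martingale-array CLT, deterministic covariance $\overline{\Sigma}$ obtained from $\frac{1}{n}\sum_i X_{2i\Delta_n}^{1-\alpha_0/2}\to\frac12\overline{I}(\alpha_0)$, hence plain convergence in law) and (E.2) by Proposition \ref{Th:LFGN}. You also correctly identify the one genuinely new point relative to the fixed-window case, namely that the Euler-approximation remainder is no longer automatically negligible and that the hypothesis $\sqrt{n}\,\Delta_n^{1/\alpha_0+\alpha_0/4-1/2-\epsilon_0}\to0$ is exactly what makes $A_n(\theta_0)$ times this remainder vanish, the $\epsilon_0$ slack absorbing the logarithmic factors from $u_n$.
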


The condition $\sqrt{n} \Delta_n^{1/\alpha_0 + \alpha_0/4 - 1/2-\epsilon_0} \to 0$ is compatible with $n \Delta \to + \infty$ if $\alpha_0 \in (1,2)$ but stronger for $\alpha_0$ near $2$.
We also mention that we can also prove in the ergodic case a uniqueness result for the estimation of $(\sigma^2, \delta)$ (similar to Theorem \ref{Th:Unicité}) if we assume that $\alpha_0$ is known.



\section{Auxiliary results} \label{S:auxi}


\subsection{Moments and approximation} \label{Ss:moment}

Since $X_t$ is  positive but not lower bounded by some strictly positive constant, we first establish conditions to ensure  that  $\E(1/ X^p_t)$ is finite. We also give some bounds for the moments of $(X_t)_t$ and its increments.

 \begin{prop}\label{Th:Moments} 
Let $(X_t)_{t \geq 0}$ be the solution of \eqref{eq:EDSCIR} with $a >0$ , $b \in \R$, $\delta> 0$, $\sigma > 0$, $x_0 > 0$ and $2a > \sigma^2$. Then we have 
$$
\begin{array}{l}
(i) \quad \text{for}  \; 1 \leq p < \frac{2a}{\sigma^2}, \quad \sup_{t \geq 0} \E(\frac{1}{X_t^p}) < + \infty  , 
\end{array}
$$
and for $ 1 \leq p < \alpha$, $\exists C_p >0$ such that $\forall t_1,t_2 \in [0,1]$ and $\forall i \in [ \! [ 0; n-1 ] \! ]$
$$
\begin{array}{l}
(ii) \;   \E \left( \sup_{t_1 \leq u \leq t_2} X_u^p  |\mathcal{F}_{t_1}\right) \leq C_p(1+ X_{t_1}^p), \\ 
(iii)  \; 
\E_i \left( \sup_{s\in [i \Delta_n, (i+1)\Delta_n]} \left|  \int_{i \Delta_n}^s X_{t-}^{1/ \alpha} dL^\alpha_t  \right|^{p} \right) \leq C_p \Delta_n^{p/\alpha} \left( 1 + X_{i \Delta_n}^{p / \alpha} \right), \\
 (iv) \;
        \E_i \left( \sup_{s\in [i \Delta_n, (i+1)\Delta_n]} \left|X_s - X_{i \Delta_n}\right|^{p}\right) \leq C_p\Delta_n^{p/2}\left( 1 + X_{i \Delta_n}^{p} \right), 
\\
(v) \;
 \E_i \left( \left| \int_{i \Delta_n}^{(i+1) \Delta_n} (X_{s-}^{1/ \alpha} - X_{i \Delta_n}^{1/ \alpha}) dL^\alpha_s  \right|^{p} \right) \leq C_p \Delta_n^{p/\alpha+p/2} \left( 1 + \frac{1}{X_{i \Delta_n}^{p(1-1/ \alpha)}} + X_{i \Delta_n}^{p/ \alpha} \right).

\end{array}
$$
\end{prop}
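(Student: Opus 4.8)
The plan is to prove (i) by a method different from (ii)--(v), because only (i) feels the degeneracy of the diffusion coefficient at the boundary $0$ and only (i) carries the sharp threshold $2a/\sigma^2$. A naive Lyapunov computation with $f(x)=x^{-p}$ is not enough: applying the generator to $x^{-p}$ produces the term $p\big(\tfrac{\sigma^2(p+1)}2-a\big)x^{-p-1}$ plus a nonnegative jump contribution of order $x^{-(p+\alpha-1)}$, so one can only close the resulting differential inequality when $p<2a/\sigma^2-1$, which is strictly weaker than claimed. Instead I would use the affine (continuous-state branching with immigration) structure of \eqref{eq:EDSCIR}: the substitution $w=\delta^{1/\alpha}x^{1/\alpha}z$ turns the jump measure \eqref{eq:mesureLevy} into $c_\alpha\delta\,x\,w^{-\alpha-1}dw$, linear in the state, so together with the linear drift and the diffusion coefficient $\sigma^2x$ the process is a CBI process (cf. Jiao et al. \cite{JiaoEtAl}) with $\E_{x_0}(e^{-\lambda X_t})=\exp(-x_0 v_t(\lambda)-a\int_0^t v_s(\lambda)\,ds)$, where $v$ solves $\partial_t v=-R(v)$, $v_0=\lambda$, and $R(u)=\tfrac{\sigma^2}2u^2+bu-\tilde c\,u^\alpha$ with $\tilde c>0$. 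Using $x^{-p}=\tfrac1{\Gamma(p)}\int_0^\infty\lambda^{p-1}e^{-\lambda x}d\lambda$ and Tonelli, I would reduce everything to the decay of the transform: since $R(u)\sim\tfrac{\sigma^2}2u^2$ dominates the $u^\alpha$ term, the initial layer of the Riccati flow (where $\dot v\approx-\tfrac{\sigma^2}2v^2$) gives $\int_0^t v_s(\lambda)\,ds=\tfrac2{\sigma^2}\ln\lambda+O(1)$ with $v_t(\lambda)$ bounded, independently of the sign of $b$, whence $\E(e^{-\lambda X_t})\le C\lambda^{-2a/\sigma^2}$ and the integral converges exactly for $p<2a/\sigma^2$. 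For the uniform bound $\sup_{t\ge0}$ I would split at a fixed $t_0$: for $t\ge t_0$ the initial-layer logarithm already yields the uniform decay, while for $t\le t_0$ the factor $e^{-x_0 v_t(\lambda)}$, with $v_t(\lambda)\to 2/(\sigma^2 t)$ as $\lambda\to\infty$, supplies the cut-off keeping $\E(X_t^{-p})$ close to $x_0^{-p}$.

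Parts (ii)--(iv) are classical moment estimates. For (ii) I would start from \eqref{eq:EDSCIR}, apply the Burkholder--Davis--Gundy inequality to the Brownian integral and its stable analogue $\E_i\sup_v|\int H\,dL^\alpha|^p\le C\,\E_i(\int|H|^\alpha ds)^{p/\alpha}$ (valid exactly for $p<\alpha$, the source of that restriction) to the $L^\alpha$-integral, using $|X^{1/\alpha}|^\alpha=X$; bounding the drift by Hölder, using Young's inequality to absorb the lower-order sup-moments into the left-hand side, and closing with Gronwall then gives (ii). Part (iii) is the one-interval version: the stable BDG bound reduces the left-hand side to $\E_i(\int_{i\Delta_n}^{(i+1)\Delta_n}X_s\,ds)^{p/\alpha}\le\Delta_n^{p/\alpha}\E_i(\sup_s X_s)^{p/\alpha}$, and (ii) (with exponent $1$, then Jensen for the fractional power) controls the last factor by $1+X_{i\Delta_n}^{p/\alpha}$. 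For (iv) I would split the increment into drift, Brownian and jump parts, which scale as $\Delta_n^{p}$, $\Delta_n^{p/2}$ (BDG) and $\Delta_n^{p/\alpha}$ (by (iii)); since $\Delta_n\le1$ and $1/\alpha>1/2$, all are dominated by the diffusive scale $\Delta_n^{p/2}$.

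The refined bound (v) is the one that dictates the whole approach. After the stable BDG inequality the task is to estimate $\E_i\big(\int_{i\Delta_n}^{(i+1)\Delta_n}|X_s^{1/\alpha}-X_{i\Delta_n}^{1/\alpha}|^\alpha\,ds\big)^{p/\alpha}$. The cheap Hölder bound $|X_s^{1/\alpha}-X_{i\Delta_n}^{1/\alpha}|\le|X_s-X_{i\Delta_n}|^{1/\alpha}$ loses the sharp power of $\Delta_n$, so I would instead use the mean value theorem, $|X_s^{1/\alpha}-X_{i\Delta_n}^{1/\alpha}|\le\tfrac1\alpha(\inf_{[i\Delta_n,s]}X)^{1/\alpha-1}|X_s-X_{i\Delta_n}|$, trading the lost Hölder exponent for a negative power of the running infimum. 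Raising to the power $\alpha$, integrating, and taking the $(p/\alpha)$-th power produces $(\inf X)^{(1-\alpha)p/\alpha}\Delta_n^{p/\alpha}\sup_s|X_s-X_{i\Delta_n}|^p$; a Hölder split then sends the increment to (iv), which supplies the extra $\Delta_n^{p/2}$ (hence the exponent $p/\alpha+p/2$), and leaves a negative moment of the infimum, which is precisely what yields the factor $X_{i\Delta_n}^{-p(1-1/\alpha)}$.

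The step I expect to be the real obstacle is this last one: controlling the conditional negative moment of the running infimum, $\E_i[(\inf_{[i\Delta_n,(i+1)\Delta_n]}X_s)^{-r}]\le C(1+X_{i\Delta_n}^{-r})$. This is a local, pathwise-infimum analogue of (i), and proving it---while keeping the exponent $r$ below the threshold $2a/\sigma^2$ after the Hölder split---requires re-using the boundary estimates behind (i) on the short interval started from $X_{i\Delta_n}$, together with a maximal inequality to pass from the time-$s$ negative moments to that of $\inf_s X_s$. Once this ingredient is in place, the remaining computations in (v), and all of (ii)--(iv), are routine.
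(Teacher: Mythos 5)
Your plan for (i) follows the same route as the paper (the CBI Laplace transform \eqref{E:laplace}, the Riccati flow \eqref{eq:Laplace Eq Diff}, and the Mellin representation $x^{-p}=\tfrac{1}{\Gamma(p)}\int_0^\infty\lambda^{p-1}e^{-\lambda x}\,d\lambda$), but two details are wrong. First, the branching mechanism has the wrong sign: the jump contribution is $\int_0^\infty(e^{-zu}-1+zu)F^{\alpha}(du)>0$, so $R(z)=\tfrac{\sigma^2}{2}z^2+\tfrac{\overline{\delta}^{\alpha}}{\alpha}z^{\alpha}+bz$, not $\tfrac{\sigma^2}{2}z^2+bz-\tilde c\,z^{\alpha}$. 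This is not cosmetic: the \emph{positive} $z^\alpha$ term makes $az/R(z)$ smaller, i.e.\ it works \emph{against} the lower bound on the immigration integral that you need, and the paper has to control it quantitatively (through the factor $g(u)=1+\tfrac{2\overline{\delta}^{\alpha}}{\alpha\sigma^2u^{\epsilon(2-\alpha)}}\to1$ on the integration range $z\in[u^\epsilon,u]$) in order to keep the sharp exponent $2a/\sigma^2$. Second, your uniformity argument for small $t$ fails as stated: for fixed $t\le t_0$, $v_t(\lambda)$ \emph{saturates} (it increases to roughly $2/(\sigma^2 t)$ as $\lambda\to\infty$), so $e^{-x_0v_t(\lambda)}$ is bounded below by a positive constant and cannot by itself make $\int^\infty\lambda^{p-1}\E(e^{-\lambda X_t})\,d\lambda$ converge; the large-$\lambda$ decay must still come from the immigration factor, suitably combined with the initial-state factor in the regime $\lambda t\gtrsim1$. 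The clean fix is the paper's: split not at a fixed $t_0$ but at the $\lambda$-dependent hitting time $t_\lambda^{\epsilon}=\inf\{t\ge0:\,v_t(\lambda)\le\lambda^{\epsilon}\}$, using $\E(e^{-\lambda X_t})\le e^{-x_0\lambda^{\epsilon}}$ for $t\le t_\lambda^{\epsilon}$ (monotonicity of $v$) and $\E(e^{-\lambda X_t})\le\exp(-\int_{\lambda^{\epsilon}}^{\lambda}\tfrac{az}{R(z)}dz)\le C_\epsilon\,\lambda^{-(1-\epsilon)2a/(\sigma^2g(\lambda))}$ for $t\ge t_\lambda^{\epsilon}$; both bounds are uniform in $t$ and integrable against $\lambda^{p-1}d\lambda$ when $p<2a/\sigma^2$.

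For (ii)--(v) the paper gives no proof (it defers to Proposition 2.2 of \cite{BC}), and your sketches of (ii)--(iv) are the standard arguments. But your treatment of (v) has a genuine gap, and it is exactly the step you flag: the detour through negative moments of the running infimum is both unnecessary and, in the claimed generality, unworkable. The conditional bound $\E_i[(\inf_{[i\Delta_n,(i+1)\Delta_n]}X)^{-r}]\le C(1+X_{i\Delta_n}^{-r})$ is left unproven, and your H\"older split makes it unattainable: since (iv) requires $pq'<\alpha$, you must take $q'\downarrow1$, hence $q\uparrow\infty$, so the needed exponent $r=(\alpha-1)pq/\alpha$ blows up as $p\uparrow\alpha$; infimum bounds of this type, when true, carry restrictions tied to $2a/\sigma^2$ that do not appear in the statement of (v). The missing idea is elementary: replace the mean-value bound (whose intermediate point forces the infimum) by the deterministic inequality $|y^{1/\alpha}-x^{1/\alpha}|\le|y-x|\,x^{1/\alpha-1}$, valid for \emph{all} $x,y>0$ (for $y\ge x$ by concavity, tangent at $x$; for $y=\theta x<x$ because $1-\theta^{1/\alpha}\le1-\theta$ when $1/\alpha<1$). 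With $x=X_{i\Delta_n}$ the prefactor is $\F_{i\Delta_n}$-measurable and exits the conditional expectation, so the stable BDG bound plus (iv) gives directly
\begin{equation*}
\E_i\Bigl(\bigl|\textstyle\int_{i\Delta_n}^{(i+1)\Delta_n}(X_{s-}^{1/\alpha}-X_{i\Delta_n}^{1/\alpha})\,dL_s^{\alpha}\bigr|^p\Bigr)
\le C_p\,X_{i\Delta_n}^{-p(1-1/\alpha)}\Delta_n^{p/\alpha}\,\E_i\sup_s|X_s-X_{i\Delta_n}|^p
\le C_p\,\Delta_n^{p/\alpha+p/2}\bigl(X_{i\Delta_n}^{-p(1-1/\alpha)}+X_{i\Delta_n}^{p/\alpha}\bigr),
\end{equation*}
which is exactly the claimed right-hand side, with no infimum moments and no H\"older loss. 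This is the same device the paper itself uses for the square root in the proof of Lemma \ref{L:ramenerapplati} ($|y^{1/2}-x^{1/2}|\le C|y-x|/x^{1/2}$).
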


We next give a sharp bound for the Euler approximation (without drift) of the symmetrized normalized increments of $X$. This result can be seen as an extension of Lemma 11  in Jacod Todorov \cite{JacodTodorovVolatility}. We obtain here a better rate of weak approximation and this result is a key step in the proof of Proposition \ref{Th:TCL}.
To shorten the notation, we set
 \begin{equation} \label{eq:rho}
      \rho_i^{n} = \frac{u_n}{\sqrt{\Delta_n}} \frac{\Delta^{s,n}_{i} X}{\sqrt{X_{i \Delta_n}}} \quad \text{and} \quad
 \overline{\rho}_i^{n} = \frac{u_n}{\sqrt{\Delta_n}} \frac{\Delta^{s,n}_{i} \overline{X}}{\sqrt{X_{i \Delta_n}}},
 \end{equation}
 where
$
  \Delta^{s,n}_{i}  \overline{X}  = \sigma_0 X_{i\Delta_n}^{1/2}\Delta^{s,n}_{i} B  + \delta_0^{1/\alpha_0} X_{i\Delta_n}^{1/\alpha_0}\Delta^{s,n}_{i} L^{\alpha_0}
$
and $(X_t)_{t \geq 0}$ solves \eqref{eq:EDSCIR} with $a_0 >0$ , $b_0 \in \R$, $\delta_0> 0$, $\sigma_0 > 0$, $x_0 > 0$ and $2a_0 \geq \sigma_0^2$.
\begin{lem}\label{L:ramenerapplati}
 Let $f$ be an even bounded $\mathcal{C}^2$ function with bounded derivatives. Then we have (with $\rho_i^n$ and $\overline{\rho}_i^{n}$ defined by \eqref{eq:rho}) for $\epsilon>0$, $\exists C_{\epsilon}$ such that $\forall i \in [ \! [ 0; n-1 ] \! ]$
    \begin{align}
  |\E_{i}  (f (\rho_i^n) - f( \overline{\rho}_i^{n}))|  \leq  & C _{\epsilon} u_n \Delta_n^{1/ \alpha_0- \epsilon} \left(1+X_{i\Delta_n}^{1/2} 
  \frac{1}{X_{i\Delta_n}}\left[1 + \frac{1}{\Delta_n}\E_{i} \int_{i\Delta_n}^{(i+2)\Delta_n} \frac{dv}{X_v} \right]\right), \label{eq:cos-cos}
\end{align}
and for $1 \leq p< \alpha_0$,  $\exists C_p$ such that $\forall i \in [ \! [ 0; n-1 ] \! ]$
 \begin{align}  
    \E_{i}  (| f ( \rho_i^{n})  - f( \overline{\rho}_i^{n})|^p ) \leq C_p u_n^p \Delta_n^{p /2}(1+ \frac{1}{X_{i\Delta_n}^{p}} +X_{i\Delta_n}^{p/2}). \label{eq:cos-cosQ}    
     \end{align}
\end{lem}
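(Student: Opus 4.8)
The plan is to reduce both estimates to moment bounds on $\rho_i^n-\overline{\rho}_i^n$, and to obtain the sharp weak rate \eqref{eq:cos-cos} by isolating the jump contribution while cancelling the drift and Brownian ones. Writing the symmetrized increment as $\Delta^{s,n}_i Y=\int_{i\Delta_n}^{(i+2)\Delta_n}\varepsilon_i(s)\,dY_s$ with $\varepsilon_i=-1$ on $[i\Delta_n,(i+1)\Delta_n)$ and $\varepsilon_i=+1$ on $[(i+1)\Delta_n,(i+2)\Delta_n)$, I first decompose
\[
\Delta^{s,n}_i X-\Delta^{s,n}_i\overline{X}=D_i+B_i+J_i,
\]
where $D_i=\int\varepsilon_i(s)(a_0-b_0X_s)\,ds$ is the drift, $B_i=\sigma_0\int\varepsilon_i(s)(X_s^{1/2}-X_{i\Delta_n}^{1/2})\,dB_s$ the Brownian coefficient-freezing error, and $J_i=\delta_0^{1/\alpha_0}\int\varepsilon_i(s)(X_{s-}^{1/\alpha_0}-X_{i\Delta_n}^{1/\alpha_0})\,dL^{\alpha_0}_s$ the jump coefficient-freezing error, so that $\rho_i^n-\overline{\rho}_i^n=\frac{u_n}{\sqrt{\Delta_n X_{i\Delta_n}}}(D_i+B_i+J_i)$.

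For the strong bound \eqref{eq:cos-cosQ} I would use only that $f$ is Lipschitz, $|f(\rho_i^n)-f(\overline{\rho}_i^n)|\le\|f'\|_\infty|\rho_i^n-\overline{\rho}_i^n|$, and bound $\E_i|D_i+B_i+J_i|^p$ term by term for $1\le p<\alpha_0$. By Jensen and Proposition~\ref{Th:Moments}(ii), $\E_i|D_i|^p\lesssim\Delta_n^p(1+X_{i\Delta_n}^p)$; by the Burkholder--Davis--Gundy inequality together with the elementary bound $(X_s^{1/2}-X_{i\Delta_n}^{1/2})^2\le|X_s-X_{i\Delta_n}|$ and Proposition~\ref{Th:Moments}(iv), $\E_i|B_i|^p\lesssim X_{i\Delta_n}^{-p/2}\Delta_n^p(1+X_{i\Delta_n}^p)$; and Proposition~\ref{Th:Moments}(v) gives $\E_i|J_i|^p\lesssim\Delta_n^{p/\alpha_0+p/2}(1+X_{i\Delta_n}^{-p(1-1/\alpha_0)}+X_{i\Delta_n}^{p/\alpha_0})$, of smaller order since $1/\alpha_0>1/2$. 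Multiplying by $u_n^p\Delta_n^{-p/2}X_{i\Delta_n}^{-p/2}$ and collecting powers of $X_{i\Delta_n}$ yields the stated $u_n^p\Delta_n^{p/2}(1+X_{i\Delta_n}^{-p}+X_{i\Delta_n}^{p/2})$.

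For the weak bound \eqref{eq:cos-cos} the Lipschitz estimate gives only $u_n\Delta_n^{1/2}$, which is \emph{larger} than the target $u_n\Delta_n^{1/\alpha_0-\epsilon}$ because $1/\alpha_0>1/2$; worse, a second-order Taylor remainder of the Brownian part is still of size $u_n^2\Delta_n^{1/2}$, which exceeds the target when $\alpha_0$ is close to $1$. Hence the drift and Brownian contributions must be shown to cancel down to the jump scale. I would split
\[
f(\rho_i^n)-f(\overline{\rho}_i^n)=\big[f(\overline{\rho}_i^n+\Xi_i^J+\Xi_i^c)-f(\overline{\rho}_i^n+\Xi_i^J)\big]+\big[f(\overline{\rho}_i^n+\Xi_i^J)-f(\overline{\rho}_i^n)\big],
\]
where $\Xi_i^c$ and $\Xi_i^J$ are the continuous ($D_i+B_i$) and jump ($J_i$) parts of $\rho_i^n-\overline{\rho}_i^n$. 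The jump bracket is handled at first order, $|\E_i[\cdots]|\le\|f'\|_\infty\E_i|\Xi_i^J|$; a refinement of $J_i$ in which $X_{s-}^{1/\alpha_0}-X_{i\Delta_n}^{1/\alpha_0}$ is expanded by the mean value theorem produces the negative power $X_v^{1/\alpha_0-1}$, hence the reciprocal-moment factor $\frac1{\Delta_n}\E_i\int_{i\Delta_n}^{(i+2)\Delta_n}dv/X_v$, and gives exactly the $u_n\Delta_n^{1/\alpha_0-\epsilon}$ rate with the stated $X$-dependence.

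For the continuous bracket I would Taylor-expand in $\Xi_i^c$ (which has finite moments of every order, since $X_s^{1/2}$ is square integrable, so $\E_i B_i^2<\infty$). The second-order remainder is $O(u_n^2\Delta_n^{1/2})$ and, as noted, is not by itself small enough, so the first-order term $\frac{u_n}{\sqrt{\Delta_n X_{i\Delta_n}}}\E_i[f'(\overline{\rho}_i^n+\Xi_i^J)(D_i+B_i)]$ must be treated by cancellation rather than by size. Here I would exploit that, conditionally on $\F_{i\Delta_n}$, the frozen increment $\overline{\rho}_i^n$ is symmetric --- because $\Delta^{s,n}_i B$ and $\Delta^{s,n}_i L^{\alpha_0}$ are differences of i.i.d.\ increments --- together with the evenness of $f$ (so $f'$ is odd) and the vanishing amplitude of $\overline{\rho}_i^n$ as $u_n\to0$. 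Replacing $f'$ at the frozen increment by its value at the $B$-independent jump part and using $\int\varepsilon_i(s)\,ds=0$, the leading drift contribution cancels ($\E_i D_i=O(\Delta_n^2)$) and the mean-zero Brownian martingale structure cancels, leaving only higher-order residuals driven by the coefficient variation, of order $O(u_n\Delta_n^{1/\alpha_0-\epsilon})$. I expect this cancellation to be the main obstacle: extracting it rigorously with only $\mathcal{C}^2$ smoothness of $f$ and with the degenerate, non-Lipschitz coefficient $X^{1/2}$ that is not bounded away from zero (whence the reciprocal moments of $X$), while tracking the $\Delta_n^{-\epsilon}$ loss incurred by replacing the unavailable $L^{\alpha_0}$/$L^2$ estimates by $L^p$ estimates with $p\uparrow\alpha_0$.
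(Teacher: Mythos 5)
Your decomposition of $\rho_i^n-\overline{\rho}_i^n$ into drift, Brownian coefficient-freezing and jump coefficient-freezing errors is exactly the paper's, and your treatment of \eqref{eq:cos-cosQ} (Lipschitz bound plus term-by-term moments) is the paper's argument — up to one slip: the inequality $(X_s^{1/2}-X_{i\Delta_n}^{1/2})^2\le|X_s-X_{i\Delta_n}|$ together with Proposition \ref{Th:Moments} only yields $\E_i|B_i|^p\le C\Delta_n^{3p/4}(1+X_{i\Delta_n}^{p/2})$, hence $u_n^p\Delta_n^{p/4}$ after normalization; to reach the stated $\Delta_n^{p/2}$ and the $X_{i\Delta_n}^{-p}$ factor you need $|y^{1/2}-x^{1/2}|\le C|y-x|/x^{1/2}$, which is what the paper uses. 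The genuine gap is in \eqref{eq:cos-cos}, and it is twofold. First, you correctly flag that the second-order Taylor remainder of the continuous bracket is $O(u_n^2\Delta_n^{1/2})$ and too large (in fact it exceeds the target for \emph{every} $\alpha_0\in(1,2)$ once $\epsilon<1/\alpha_0-1/2$, since $u_n$ is only logarithmically small), but you offer no mechanism to improve it — and no cancellation in the first-order term can repair a remainder that is already above the target. The paper's device, absent from your plan, is It\^{o}'s formula applied to $x\mapsto\sqrt{x}$ inside the Brownian freezing error: it splits $\theta(1)_i^n=\frac{u_n}{\sqrt{\Delta_n X_{i\Delta_n}}}B_i$ into $\mu(1)_i^n+\mu(2)_i^n+\mu(3)_i^n$ (drift-of-$\sqrt{X}$, quadratic-Brownian, and jumps-of-$\sqrt{X}$ pieces integrated against $dB$), and this is precisely what yields the sharper second-moment bound \eqref{pr:cos-cos2}, $\E_i|\theta(1)_i^n|^2\le Cu_n^2\Delta_n(\cdots)$, which \emph{is} admissible because $u_n^2\Delta_n\le Cu_n\Delta_n^{1/\alpha_0-\epsilon}$.

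Second, your proposed cancellation for the first-order term goes in the wrong direction. Writing $\overline{\rho}_i^n=\overline{\rho}_i^{1,n}+\overline{\rho}_i^{2,n}$ with $\overline{\rho}_i^{1,n}=\sigma_0u_n\Delta_i^{s,n}B/\sqrt{\Delta_n}$ the Brownian part, you replace $f'$ at the frozen increment by its value at the $B$-independent jump part and invoke the martingale property of $B_i$ in the filtration enlarged by $\sigma(L^{\alpha_0})$. That martingale step is sound, but the replacement error it creates is $\|f''\|_\infty\,\E_i\bigl[|\overline{\rho}_i^{1,n}|\,|\theta(1)_i^n|\bigr]$, and $|\overline{\rho}_i^{1,n}|$ is of order $u_n$ in $L^2$ — not small in any power of $\Delta_n$ — so this error is again $O(u_n^2\Delta_n^{1/2})$, exactly the order you are trying to beat. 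The paper organizes the cancellation so that the discarded quantity is the \emph{jump} part $\overline{\rho}_i^{2,n}$, which is $O(u_n\Delta_n^{1/\alpha_0-1/2})$ in $L^q$ for $q<\alpha_0$ by the scaling of $L^{\alpha_0}$: after the It\^{o} decomposition, $\mu(1)$ and $\mu(3)$ are handled by size alone ($\E_i|\mu(1)_i^n|\lesssim u_n\Delta_n$ and $\E_i^{1/p}|\mu(3)_i^n|^p\lesssim u_n\Delta_n^{1/p}$ with $p=\alpha_0+\epsilon$, whence the $\epsilon$-loss), and only the purely quadratic-in-$B$ term $\mu(2)_i^n$ requires cancellation; one pairs it with $f'(\overline{\rho}_i^{1,n})$, whose conditional expectation against $\mu(2)_i^n$ vanishes exactly by parity ($\mu(2)_i^n$ is even in $B$, $f'$ is odd, $\overline{\rho}_i^{1,n}$ is odd in $B$), and the residue $\E_i|\mu(2)_i^n(f'(\overline{\rho}_i^n)-f'(\overline{\rho}_i^{1,n}))|$ is then controlled by H\"older through $|\overline{\rho}_i^{2,n}|$, giving the admissible bound $Cu_n^2\Delta_n^{1/\alpha_0}(1+X_{i\Delta_n}^{-1/2})$ of \eqref{eq:mu2W}. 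Without these two ingredients — the $\sqrt{X}$ It\^{o} decomposition and discarding the small stable part rather than the $O(u_n)$ Brownian part — your scheme stalls at $u_n^2\Delta_n^{1/2}$ and cannot reach $u_n\Delta_n^{1/\alpha_0-\epsilon}$.
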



Our estimating equations are based on the computation of the expectation $P_nf(x, \theta)$ given by \eqref{eq:defPnf}. For  $f(x) =\cos(x)$ this expectation has an explicit expression but not for $f(x)=1 -K(x)$. The next lemma gives an asymptotic expansion 
of $P_nf(x, \theta)$ for functions vanishing in a neighborhood of zero. It is similar to Lemma 6.1 (i) in \cite{MiesMoments} derived from It\^{o}'s Formula.  Here we propose a different proof, which easily allows higher order expansion,  by using the explicit form of the conditional distribution of the Euler approximation $ \Delta^{s,n}_{i}  \overline{X} $ as a convolution between a Gaussian distribution and a Stable distribution.

\begin{lem}\label{L:expan}
Let $f:\R \mapsto \R$ be a bounded even function such that $f=0$ on $[-\eta,+ \eta]$ for some $\eta>0$. We assume that $u_n [\ln(1/ \Delta_n)]^{1/2} \rightarrow 0$. Then we have $\forall x>0$
\begin{eqnarray*}
P_n f(x, \theta)= u_n^{\alpha} \Delta_n^{1-\alpha/2}c_{\alpha} \delta x^{1-\alpha/2} \int \frac{f(v)}{|v|^{\alpha+1}} dv 
 +u_n^{\alpha} \Delta_n^{1-\alpha/2} \varepsilon_n(\theta,x), \quad \text{with $c_{\alpha}$  given by \eqref{eq:c-alpha},}
\end{eqnarray*}
where   for any compact subset $A$ of $\Theta$, 
$
\sup_{\theta \in A} |\varepsilon_n(\theta,x)| \leq (1+x) \varepsilon_n,  
$
with $\varepsilon_n \rightarrow 0$.
\end{lem}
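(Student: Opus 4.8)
The plan is to exploit the explicit law of the Euler increment in \eqref{eq:defPnf} and reduce everything to the tail behaviour of a symmetric stable density. First I would write
\[ P_nf(x,\theta)=\E\bigl[f(G_n+c_nS)\bigr],\qquad c_n=(2\delta)^{1/\alpha}x^{1/\alpha-1/2}u_n\Delta_n^{1/\alpha-1/2}, \]
where $G_n=\sqrt2\,u_n\sigma B_1$ is centred Gaussian with variance $2u_n^2\sigma^2$, and $S=S_1^\alpha$ is symmetric $\alpha$-stable (characteristic function $e^{-|z|^\alpha}$) independent of $B_1$; note $c_n\to0$ and $c_n^\alpha=2\delta\,x^{1-\alpha/2}u_n^\alpha\Delta_n^{1-\alpha/2}$, which already carries the announced order $u_n^\alpha\Delta_n^{1-\alpha/2}$. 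Conditioning on $G_n$ exhibits the law as the Gaussian--stable convolution mentioned in the statement, and with $p_\alpha$ the (bounded, smooth) density of $S$,
\[ P_nf(x,\theta)=\E\Bigl[\tfrac1{c_n}\int_\R f(G_n+y)\,p_\alpha(y/c_n)\,dy\Bigr]. \]

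The key input I would use is the tail expansion $p_\alpha(t)=\tfrac{c_\alpha}{2}|t|^{-1-\alpha}(1+R_\alpha(t))$ as $|t|\to\infty$, with $|R_\alpha(t)|\le C|t|^{-\alpha}$ uniformly for $\alpha$ in a compact subinterval of $(1,2)$; the leading constant $\tfrac1\pi\Gamma(\alpha+1)\sin(\pi\alpha/2)$ equals $c_\alpha/2$ by the reflection formula, which is exactly how $c_\alpha$ from \eqref{eq:c-alpha} enters. I would split on $\{|G_n|\le\eta/2\}$ and its complement. On $\{|G_n|\le\eta/2\}$, since $f$ vanishes on $[-\eta,\eta]$ the integrand is supported on $|y|\ge\eta/2$, so $|y/c_n|\ge\eta/(2c_n)\to\infty$ and the expansion applies:
\[ \tfrac1{c_n}\int f(G_n+y)p_\alpha(y/c_n)\,dy=\tfrac{c_\alpha}{2}c_n^\alpha\int f(G_n+y)\frac{1+R_\alpha(y/c_n)}{|y|^{1+\alpha}}\,dy. \]
Since $\tfrac{c_\alpha}{2}c_n^\alpha=c_\alpha\delta x^{1-\alpha/2}u_n^\alpha\Delta_n^{1-\alpha/2}$ and $\int f(v)|v|^{-1-\alpha}dv$ converges ($f$ bounded, vanishing near $0$), this yields the announced leading term once $G_n$ and $R_\alpha$ are sent to $0$.

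It then remains to control three errors and divide by $u_n^\alpha\Delta_n^{1-\alpha/2}$. For the shift, after $w=G_n+y$ I would bound $\int f(w)[|w-G_n|^{-1-\alpha}-|w|^{-1-\alpha}]dw=O(|G_n|)$ by the mean value theorem applied to the smooth kernel $|w|^{-1-\alpha}$ on $\{|w|\ge\eta/2\}$, giving an error $O(c_n^\alpha\,\E|G_n|)$ with $\E|G_n|=O(u_n)$. For the next stable order, $|R_\alpha(y/c_n)|=O(c_n^\alpha)$ on the support produces $O(c_n^{2\alpha})$. Each of these, after division, is a power $x^{\beta}$ with $\beta\in(0,1)$ (hence $\le 1+x$) times a null sequence, uniformly over a compact $A\subset\Theta$ where $\delta,\sigma$ are bounded and $\alpha$ is bounded away from $1,2$. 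The delicate term is the complement, $|\E[f(G_n+c_nS)\mathbf{1}_{|G_n|>\eta/2}]|\le\|f\|_\infty\,\PP(|G_n|>\eta/2)\le C\exp(-\eta^2/(16\sigma^2u_n^2))$.

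The hard part will be verifying that this Gaussian tail is negligible compared with $\Delta_n^{1-\alpha/2}$: this is exactly what the hypothesis $u_n[\ln(1/\Delta_n)]^{1/2}\to0$ guarantees, since it forces $1/u_n^2\gg\ln(1/\Delta_n)$ and hence $\exp(-\eta^2/(16\sigma^2u_n^2))=o(\Delta_n^K)$ for every $K>0$, in particular $o(u_n^\alpha\Delta_n^{1-\alpha/2})$. Assembling the pieces, every error is $(1+x)$ times a sequence tending to $0$ uniformly on $A$, which gives the claimed $\sup_{\theta\in A}|\varepsilon_n(\theta,x)|\le(1+x)\varepsilon_n$ with $\varepsilon_n\to0$. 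The only other genuinely technical point is establishing the stable tail expansion with a remainder uniform in $\alpha$ and matching its constant to $c_\alpha/2$.
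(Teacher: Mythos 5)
Your proposal is correct and takes essentially the same route as the paper: both write $P_nf$ as the Gaussian--stable convolution, split according to whether the Gaussian part exceeds $\eta/2$, dispose of the bad event via the Gaussian tail $\exp(-\eta^2/(16\sigma^2u_n^2))=o(\Delta_n^K)$ guaranteed by $u_n[\ln(1/\Delta_n)]^{1/2}\to0$, and on the good event apply the stable-density tail expansion $g_\alpha(t)=\frac{c_\alpha}{2}t^{-1-\alpha}(1+O(t^{-\alpha}))$ together with a first-order expansion in the Gaussian shift yielding the $O(u_n)$ error. Your mean-value-theorem bound on $|w-G_n|^{-1-\alpha}-|w|^{-1-\alpha}$ is just the paper's expansion of $(1-\sqrt{2}\sigma u_n z/v)^{-(\alpha+1)}$ in different clothing, so the two arguments coincide step for step.
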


\subsection{Convergences} \label{Ss:conv}
 
 The asymptotic behavior of the estimator $\hat{\theta}_n$ is derived using the theory of estimating equations (see \cite{JacodSorensenUniforme}) and is based on the
 convergence  of  $F_n(\theta_0)$ defined by \eqref{E:estf}  and  its gradient $\nabla_{\theta} F_n(\theta)$.
We set 
\begin{equation} \label{E:An}
A_n (\theta) = \sqrt{n} \; diag\left(\frac{1}{u_n^2} ,\frac{1}{u_n^{\alpha/2} \Delta_n^{1/2 - \alpha/4}}  , \frac{1}{u_n^{\alpha/2} \Delta_n^{1/2 - \alpha/4}} \right). 
\end{equation}
We first state a Central Limit Theorem for $F_n(\theta_0)$ with rate $A_n(\theta_0)$.
\begin{prop}\label{Th:TCL} 
We assume {\bf H} and $u_n=1/[\ln(1/ \Delta_n)]^{p}$ with $p>1/2$. 
\begin{enumerate}
\item
If $n \Delta_n = 1$, then we have the stable convergence in law
\[ A_n(\theta_0) F_n(\theta_0) \xRightarrow[n \to \infty]{\mathcal{L}-s} \Sigma(\theta_0)^{1/2} \mathcal{N},\]
where $\mathcal{N}$ is a standard Gaussian variable independent of $\mathcal{F}_1$ and $\Sigma(\theta_0)$ is the symmetric definite positive matrix defined by \eqref{E:sigma}.
\item
 If $n \Delta_n \to + \infty$, $b_0>0$ and   $\sqrt{n} \Delta_n^{1/\alpha_0 + \alpha_0/4 - 1/2-\epsilon_0} \to 0$  for some $\epsilon_0 >0$, then we have the  convergence in distribution
\[ A_n(\theta_0) F_n(\theta_0) \xRightarrow[n \to \infty]{} \overline{\Sigma}(\theta_0)^{1/2} \mathcal{N},\]
where $\mathcal{N}$ is a standard Gaussian variable and $\overline{\Sigma}(\theta_0)$ is the symmetric definite positive matrix defined by \eqref{E:sigma} replacing  $I(\alpha,X)$ by $\overline{I}(\alpha)= \int_0^{+ \infty} x^{1-\alpha/2} \pi_{0}(dx)$.
\end{enumerate}
\end{prop}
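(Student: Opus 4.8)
The plan is to view $A_n(\theta_0)F_n(\theta_0)$ as a triangular array of martingale differences, obtained after replacing the true increments by their drift-free Euler counterparts, and to apply a (stable) central limit theorem.

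\textbf{Step 1: martingale decomposition.} Since $P_nf(X_{2i\Delta_n},\theta_0)=\E_{2i}f(\overline{\rho}_{2i}^{n})$ by \eqref{eq:defPnf}, I would write $F_n(\theta_0)=M_n+R_n$ with
\[ M_n=\frac1n\sum_{i=0}^{\lfloor n/2\rfloor-1}g_{2i}^{n},\qquad g_{2i}^{n}:=f(\overline{\rho}_{2i}^{n})-\E_{2i}f(\overline{\rho}_{2i}^{n}),\qquad R_n=\frac1n\sum_{i=0}^{\lfloor n/2\rfloor-1}\bigl(f(\rho_{2i}^{n})-f(\overline{\rho}_{2i}^{n})\bigr). \]
The even indexing together with the symmetrized increments makes the blocks $[2i\Delta_n,(2i+2)\Delta_n]$ disjoint, so each $g_{2i}^{n}$ is $\F_{(2i+2)\Delta_n}$-measurable with $\E_{2i}g_{2i}^{n}=0$; hence $(g_{2i}^{n})_i$ is a martingale-difference array for the discrete filtration $(\F_{2i\Delta_n})_i$, and the same is true of the centered version of the summands of $R_n$.

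\textbf{Step 2: negligibility of the Euler error.} I would split $R_n$ into its compensator $\frac1n\sum_i\E_{2i}(f(\rho_{2i}^{n})-f(\overline{\rho}_{2i}^{n}))$ and a martingale remainder. The compensator is controlled by the sharp bound \eqref{eq:cos-cos}, whose rate $u_n\Delta_n^{1/\alpha_0-\epsilon}$ is exactly what makes $A_n(\theta_0)R_n\to0$: after multiplying by the entries of $A_n$ and summing $\lfloor n/2\rfloor$ terms (using Proposition \ref{Th:Moments} to bound the averaged $X$-moments, in particular $\E(1/X_t^p)<\infty$ for $p<2a_0/\sigma_0^2$), the surviving power of $\Delta_n$ is positive and beats the logarithmic growth of $1/u_n$. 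The martingale remainder is handled in $L^2$: by \eqref{eq:cos-cosQ} with some $p\in(1,\alpha_0)$, upgraded to the second moment thanks to the boundedness of $f_1,f_2,f_3$, its conditional variance is $O(u_n^p\Delta_n^{p/2})$ per term, which is negligible after the $A_n$-scaling. In the ergodic regime the factors $\sqrt n$ and $\Delta_n$ decouple, and forcing $A_n(\theta_0)R_n\to0$ for the $(\delta,\alpha)$-components is precisely where the extra hypothesis $\sqrt n\,\Delta_n^{1/\alpha_0+\alpha_0/4-1/2-\epsilon_0}\to0$ enters.

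\textbf{Step 3: CLT for the main term.} Setting $\zeta_i^{n}=A_n(\theta_0)\frac1n g_{2i}^{n}$, I would verify the conditions of a martingale CLT. For the conditional covariance I compute $\E_{2i}(g_{2i,k}^{n}g_{2i,l}^{n})=\E_{2i}(f_kf_l)(\overline{\rho}_{2i}^{n})-\E_{2i}f_k(\overline{\rho}_{2i}^{n})\,\E_{2i}f_l(\overline{\rho}_{2i}^{n})$. The $(\cos,\cos)$ entry is obtained from the explicit expression \eqref{eq:Pncos} by a Taylor expansion, giving leading variance $2\sigma_0^4u_n^4$ (the jump contribution being of smaller order); the $(f_2,f_3)$-block follows from Lemma \ref{L:expan} applied to the products $f_kf_l$, giving $u_n^{\alpha}\Delta_n^{1-\alpha/2}c_{\alpha}\delta_0 X_{2i\Delta_n}^{1-\alpha/2}\int f_kf_l(v)/|v|^{\alpha+1}dv$; the $(\cos,f_j)$ cross-block (to which Lemma \ref{L:expan} also applies, since $\cos\cdot f_j$ vanishes near zero) carries a spare positive power of $\Delta_n$ after scaling and therefore vanishes. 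Everything then reduces to the convergence of $\frac1n\sum_iX_{2i\Delta_n}^{1-\alpha_0/2}$: a Riemann-sum argument gives $\tfrac12 I(\alpha_0,X)$ when $n\Delta_n=1$, while the ergodic theorem for the geometrically ergodic chain $(X_{2i\Delta_n})_i$ gives $\tfrac12\overline I(\alpha_0)$ when $n\Delta_n\to+\infty$; in both cases $\sum_i\E_{2i}(\zeta_i^{n}(\zeta_i^{n})^{T})\to\Sigma(\theta_0)$, resp. $\overline\Sigma(\theta_0)$. The conditional Lindeberg condition follows from the boundedness of $f_1,f_2,f_3$ together with Lemma \ref{L:expan}, the $(2+\epsilon)$-conditional moments of $\zeta_i^{n}$ again carrying a positive power of $\Delta_n$ up to logarithmic factors.

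\textbf{Step 4: stable convergence and identification.} For $n\Delta_n=1$ the limiting covariance $\Sigma(\theta_0)$ is $\F_1$-measurable through $I(\alpha_0,X)$, so I would upgrade the CLT to stable convergence with limit independent of $\F_1$ by checking asymptotic orthogonality of $(\zeta_i^{n})$ to a generating family of martingales for $\F_1$. Orthogonality to $B$ holds exactly, because $\overline{\rho}_{2i}^{n}$ depends on $B$ only through $\Delta^{s,n}_{2i}B$, which is independent of the block increment $B_{(2i+2)\Delta_n}-B_{2i\Delta_n}$; hence $\E_{2i}(g_{2i}^{n}(B_{(2i+2)\Delta_n}-B_{2i\Delta_n}))=0$. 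Orthogonality to the compensated jump martingales $\int h\,d\tilde{N}$ is the delicate point, since $L^{\alpha_0}$ is non-symmetric and the sum and difference of the two half-block jump increments are no longer independent; here I would show that the corresponding covariation, once multiplied by $A_n$ and summed, is negligible. In the ergodic case $\overline\Sigma(\theta_0)$ is deterministic, so ordinary convergence in distribution already yields the claim and no stable upgrade is needed. The main obstacle is the combination arising in Step 2, namely forcing $A_n(\theta_0)R_n\to0$ at the aggressive rate $A_n$, which genuinely requires the refined weak-approximation bound \eqref{eq:cos-cos} rather than a crude $\Delta_n^{1/2}$ estimate, together with the jump-orthogonality needed for the stable-convergence upgrade.
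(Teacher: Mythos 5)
Your Steps 1--3 follow essentially the same route as the paper: the same decomposition (replace $\rho_{2i}^{n}$ by $\overline{\rho}_{2i}^{n}$, control the conditional bias with \eqref{eq:cos-cos} and the fluctuation with \eqref{eq:cos-cosQ} upgraded to a second moment via boundedness of $f$), the same covariance computations (explicit cosine formula and Taylor expansion for the $(1,1)$ entry, Lemma \ref{L:expan} applied to $f_k$, $f_l$, $f_kf_l$ and to $f_1f_k$ for the jump block and the cross terms), the same Riemann-sum, respectively ergodic, limits for $\frac1n\sum_i X_{2i\Delta_n}^{1-\alpha_0/2}$, and you correctly locate where the extra hypothesis $\sqrt{n}\,\Delta_n^{1/\alpha_0+\alpha_0/4-1/2-\epsilon_0}\to0$ enters. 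Your exact-orthogonality argument for $M=B$ (independence of $\Delta^{s,n}_{2i}B$ from the block increment $B_{2(i+1)\Delta_n}-B_{2i\Delta_n}$, together with the independence of $L^{\alpha_0}$ from $B$) is correct, and in fact cleaner than the paper's splitting of $\overline{\rho}_{2i}^{n}$ into Brownian and jump parts at that point.

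The genuine gap is in Step 4, for martingales orthogonal to $B$: you declare that ``the corresponding covariation, once multiplied by $A_n$ and summed, is negligible'' without any argument, and this is precisely the hardest part of the proof --- condition \eqref{eq:point4cvTCL}, which the paper establishes in Lemma \ref{L:AccM}(ii) following Bull \cite{Bull}. The difficulty is quantitative, not routine: the only bound available from your toolkit is $\E_{2i}\bigl(f_k(\overline{\rho}_{2i}^{n})-P_nf_k(X_{2i\Delta_n},\theta_0)\bigr)^2\le C u_n^{\alpha_0}\Delta_n^{1-\alpha_0/2}(1+X_{2i\Delta_n})$ (Lemma \ref{L:expan}), and feeding it into Cauchy--Schwarz gives, after the $A_n$-scaling, $\frac{C}{\sqrt n}\sum_i(1+X_{2i\Delta_n})^{1/2}\,\E_{2i}^{1/2}\bigl(M_{2(i+1)\Delta_n}-M_{2i\Delta_n}\bigr)^2$, which a second Cauchy--Schwarz over $i$ only bounds by $O_P(1)$, not $o_P(1)$. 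So negligibility genuinely requires exploiting finer structure: write $M=\int\!\!\int G\,d\tilde N$ and the centered statistic as $\int h\,dB+\int\!\!\int H\,d\tilde N$ via the representation theorem, reduce by It\^o's formula to the predictable bracket $\E_{2i}\int\!\!\int H_sG_s\,\overline N(ds,dz)$, and split $G$ at a threshold $\epsilon_n=\Delta_n^{\alpha_0/8}$: the part $\{|G_s|>\epsilon_n\}$ is killed by a Chebyshev bound and the square-integrability of $M$, while the part $\{|G_s|\le\epsilon_n\}$ goes to zero by Cauchy--Schwarz and dominated convergence. Without this (or an equivalent device) the stable convergence in part 1 --- which is truly needed there, since $\Sigma(\theta_0)$ is random through $I(\alpha_0,X)$ --- is not established; part 2 is unaffected, as you note, because $\overline\Sigma(\theta_0)$ is deterministic.
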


Turning to the convergence of $\nabla_{\theta} F_n$ and recalling that $\Lambda_n(\theta)$ is defined by \eqref{E:ratelambda}, we obtain the following convergence.

\begin{prop}\label{Th:LFGN} Let $A$ be a compact subset of $\Theta$. We assume {\bf H} and $u_n=1/[\ln(1/ \Delta_n)]^{p}$ with $p>1/2$.
\begin{enumerate}
\item
 If $n \Delta_n = 1$, then we have the  convergence in probability
\[ \sup_{\theta \in A} |A_n(\theta) \nabla_\theta F_n(\theta) \Lambda_n(\theta) - W(\theta)| \to 0, \]
where $W(\theta)$ is the non-singular matrix defined by \eqref{E:wtheta}.
\item
If  $n \Delta_n \to + \infty$ and $b_0>0$, then we have the  convergence in probability
\[ \sup_{\theta \in A} |A_n(\theta) \nabla_\theta F_n(\theta) \Lambda_n(\theta) - \overline{W}(\theta)| \to 0, \]
where $\overline{W}(\theta)$ is  defined by \eqref{E:wtheta} replacing  $I(\alpha,X)$ by $\overline{I}(\alpha)= \int_0^{+ \infty} x^{1-\alpha/2} \pi_{0}(dx)$.
\end{enumerate}
\end{prop}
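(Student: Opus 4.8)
The plan is to exploit that in $F_n(\theta)$ only the centering term depends on $\theta$, so that
\[ A_n(\theta)\nabla_\theta F_n(\theta)\Lambda_n(\theta) = -\frac{1}{n}\sum_{i=0}^{\lfloor n/2\rfloor-1} D_n(\theta)\,\nabla_\theta P_n f(X_{2i\Delta_n},\theta)\,M_n(\theta), \]
where I write $A_n(\theta)=\sqrt n\,D_n(\theta)$ with $D_n=\mathrm{diag}(u_n^{-2},u_n^{-\alpha/2}\Delta_n^{-(1/2-\alpha/4)},u_n^{-\alpha/2}\Delta_n^{-(1/2-\alpha/4)})$ and $M_n(\theta)=\sqrt n\,\Lambda_n(\theta)$; the two $\sqrt n$ factors cancel. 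The right-hand side is thus a genuine empirical average of a deterministic matrix function $H_n(x,\theta):=D_n(\theta)\nabla_\theta P_n f(x,\theta)M_n(\theta)$ of the observation $X_{2i\Delta_n}$. First I would show that $H_n(x,\theta)$ converges, uniformly in $\theta\in A$ and with remainder controlled by $(1+x)\varepsilon_n$, to an explicit limit $H(x,\theta)$, and then pass the empirical average to its limit, which will be $-W(\theta)$ in Case~1 and $-\overline W(\theta)$ in Case~2.

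The central task is the asymptotic expansion of $\partial_{\sigma^2}P_n f_k$, $\partial_\delta P_n f_k$ and $\partial_\alpha P_n f_k$. For $f_1=\cos$ this is elementary from the explicit formula \eqref{eq:Pncos}: one gets $\partial_{\sigma^2}P_n f_1=-u_n^2P_n f_1$, $\partial_\delta P_n f_1=-2x^{1-\alpha/2}u_n^\alpha\Delta_n^{1-\alpha/2}P_n f_1$, and crucially $\partial_\alpha P_n f_1=-2\delta x^{1-\alpha/2}u_n^\alpha\Delta_n^{1-\alpha/2}[\ln(u_n/\sqrt{\Delta_n})-\tfrac12\ln x]P_n f_1$, so the $\alpha$-derivative carries the divergent factor $\ln(u_n/\sqrt{\Delta_n})$. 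For $f_2,f_3$, which vanish near $0$, I would differentiate the expansion of Lemma~\ref{L:expan} by re-running its proof: since the conditional law of $\Delta^{s,n}_i\overline X$ is a convolution of a Gaussian with a scaled stable, $P_n f$ is an integral against smooth densities and one may differentiate under the integral in $\sigma^2,\delta,\alpha$. This gives $\partial_\delta P_n f_2=u_n^\alpha\Delta_n^{1-\alpha/2}x^{1-\alpha/2}\psi(\alpha)(1+o(1))$ and $\partial_\delta P_n f_3=u_n^\alpha\Delta_n^{1-\alpha/2}x^{1-\alpha/2}2^\alpha\psi(\alpha)(1+o(1))$ (using $c_\alpha\int f_3(v)|v|^{-\alpha-1}dv=2^\alpha\psi(\alpha)$), the $\alpha$-derivatives being of the form $u_n^\alpha\Delta_n^{1-\alpha/2}x^{1-\alpha/2}\{[\ln(u_n/\sqrt{\Delta_n})-\tfrac12\ln x]\psi(\alpha)+\psi'(\alpha)\}$ (times $2^\alpha$ for $f_3$), and $\partial_{\sigma^2}P_n f_2,\partial_{\sigma^2}P_n f_3=O(u_n^\alpha\Delta_n^{1-\alpha/2})$, all uniformly over $\theta\in A$.

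Next I would compute $H_n$ entrywise. The block structure of $M_n$ makes the first column trivial ($M_n e_1=e_1$) and confines the $\ln(u_n/\sqrt{\Delta_n})$ mixing to the $(2,3)$ position. The decisive algebraic point is that the off-diagonal coefficient $-\delta\ln(u_n/\sqrt{\Delta_n})u_n^{-\alpha/2}\Delta_n^{-(1/2-\alpha/4)}$ of $M_n$ is designed precisely so that the third column of $H_n$ involves the combination $-\delta\ln(u_n/\sqrt{\Delta_n})\partial_\delta P_n f_k+\partial_\alpha P_n f_k$, in which the divergent $\ln(u_n/\sqrt{\Delta_n})$ term cancels and a convergent remainder survives. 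A direct check gives $H_{n,22}(x,\theta)\to\psi(\alpha)x^{1-\alpha/2}$ and $H_{n,23}(x,\theta)\to\delta x^{1-\alpha/2}\{\psi'(\alpha)-\tfrac12\psi(\alpha)\ln x\}=\delta\,\partial_\alpha[\psi(\alpha)x^{1-\alpha/2}]$, and the same with the extra factor $2^\alpha$ for the $f_3$ row; the $f_1$ row tends to $(1,0,0)$, its $(1,2)$ and $(1,3)$ entries vanishing as $u_n^{\alpha/2-2}\Delta_n^{1/2-\alpha/4}\to0$ (a log power times a genuine power of $\Delta_n$), while the $(2,1),(3,1)$ entries vanish as $O(u_n^{\alpha/2}\Delta_n^{1/2-\alpha/4})$. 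This identifies the limit $H(x,\theta)$ as the matrix obtained from \eqref{E:wtheta} on replacing $I(\alpha,X)$ by the pointwise value $x^{1-\alpha/2}$, so that formally $-\tfrac12\int_0^1 H(X_t,\theta)\,dt=W(\theta)$.

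The final step is the convergence of the averages. Splitting $-\frac1n\sum_i H_n(X_{2i\Delta_n},\theta)=-\frac1n\sum_i H(X_{2i\Delta_n},\theta)-\frac1n\sum_i[H_n-H](X_{2i\Delta_n},\theta)$, the second term is bounded by $\varepsilon_n\frac1n\sum_i(1+X_{2i\Delta_n})$, which tends to $0$ in probability since $\frac1n\sum_i(1+X_{2i\Delta_n})$ is tight by the moment bounds of Proposition~\ref{Th:Moments}. For the main term, in Case~1 ($n\Delta_n=1$) the even grid $2i\Delta_n$ has spacing $2\Delta_n$ and the Riemann-sum convergence $\frac1n\sum_i g(X_{2i\Delta_n})\to\tfrac12\int_0^1 g(X_t)\,dt$ holds in probability by continuity of $t\mapsto X_t$ and the integrability (of both $X_t^{1-\alpha/2}$ and the singular $X_t^{1-\alpha/2}\ln X_t$) furnished by Proposition~\ref{Th:Moments}; this yields $W(\theta)$ with $I(\alpha,X)=\int_0^1X_t^{1-\alpha/2}dt$, the $\tfrac12$ being exactly the prefactor of \eqref{E:wtheta}. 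In Case~2 ($n\Delta_n\to+\infty$, $b_0>0$) the geometric ergodicity of $(X_t)$ (Li and Ma \cite{Li-Ma}) gives $\frac1n\sum_i g(X_{2i\Delta_n})\to\tfrac12\int g\,d\pi_0$, replacing $I(\alpha,X)$ by $\overline I(\alpha)$ and yielding $\overline W(\theta)$, the required integrability under $\pi_0$ following from $\int_0^\infty x\,\pi_0(dx)<\infty$. Uniformity in $\theta\in A$ follows by combining the uniform remainder bounds with the smoothness in $\alpha$ of the limiting integrands (on compact $A$, $\alpha$ stays away from $1$ and $2$) through a standard covering argument. The main obstacle is the expansion step: producing the $\partial_\alpha$ expansion with the exact $\ln(u_n/\sqrt{\Delta_n})$ leading coefficient and a genuinely $o(1)$ remainder after cancellation, for which differentiating the convolution representation behind Lemma~\ref{L:expan} in $\alpha$ — which also hits the stable density itself — is the right instrument.
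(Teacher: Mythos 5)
Your overall architecture coincides with the paper's: the gradient is the empirical average of $A_n(\theta)\nabla_\theta P_nf(X_{2i\Delta_n},\theta)\Lambda_n(\theta)$ (the $\sqrt n$'s cancelling), the $f_1$ row is handled by explicit differentiation of \eqref{eq:Pncos}, the off-diagonal entry of $\Lambda_n$ is exactly what cancels the divergent $\ln(u_n/\sqrt{\Delta_n})$ produced by $\partial_\alpha P_nf_k$, and the final step is a uniform-in-$\alpha$ law of large numbers (Riemann sums when $n\Delta_n=1$, ergodic theorem when $n\Delta_n\to+\infty$). All of this matches the paper's computation of $W_n$, Lemma \ref{L:gradientf}, and the convergences \eqref{E:uni1}--\eqref{E:uni2}, \eqref{E:uni1ergo}--\eqref{E:uni2ergob}.

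The place where you genuinely diverge --- and where your proposal has a real gap --- is the expansion of $\partial_\delta P_nf_k$ and $\partial_\alpha P_nf_k$ for $k=2,3$, which you yourself identify as "the main obstacle". You propose to differentiate the physical-space convolution representation behind Lemma \ref{L:expan}, which forces you to differentiate the stable density $g_\alpha$ in its index: you would need existence of $\partial_\alpha g_\alpha$ together with a tail expansion of $\partial_\alpha g_\alpha(y)$ of the form $\partial_\alpha\left[\frac{c_\alpha}{2y^{\alpha+1}}\right]$ plus a remainder controlled uniformly over $\alpha$ in compacts --- a nontrivial analytic fact that you neither prove nor cite. Note also that simply differentiating the \emph{statement} of Lemma \ref{L:expan} is not legitimate (an $o(1)$ bound on $\varepsilon_n$ gives no information on $\partial_\alpha\varepsilon_n$), so "re-running its proof" with these extra inputs is precisely the whole work, and it is left undone. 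The paper sidesteps this difficulty entirely by passing to the Fourier side (Lemma \ref{L:gradientK}): since $K$ is compactly supported and even, duality gives $P_nK(x,\theta)=\frac{1}{2\pi}\int\F K(y)\,\F g_{n,\theta,x}(y)\,dy$, and all the $\theta$-dependence now sits in the explicit exponential \eqref{eq:fouriergn}, so $\nabla_\theta$ is computed trivially; the identity \eqref{eq:Fourier} then converts $\frac{2}{2\pi}\int\F K(y)|y|^\alpha dy$ into $\psi(\alpha)$, and the uniform remainders come for free from $\sup_{\theta\in A}|\F g_{n,\theta,x}(y)-1|\leq Cu_n(1+y^2)(1+\sqrt x)$. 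This Fourier duality step is the key lemma your plan is missing. Two smaller points: your claimed limit $(1,0,0)$ for the $f_1$ row is inconsistent with your own normalization $-\tfrac12\int_0^1H(X_t,\theta)\,dt=W(\theta)$; since $\partial_{\sigma^2}P_nf_1=-u_n^2P_nf_1$, that row tends to $(-1,0,0)$. And the Riemann-sum convergence cannot be justified by "continuity of $t\mapsto X_t$" (the paths jump); the paper instead uses a Lipschitz-in-$x$ bound on $h(\alpha,x)=x^{1-\alpha/2}\ln\sqrt x$ combined with the negative-moment bounds of Proposition \ref{Th:Moments}, which is also what delivers the uniformity in $\alpha$ that your covering argument would require.
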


\section{Proofs} \label{S:proof}
\subsection{Auxiliary results}
\subsubsection{Proof of Proposition \ref{Th:Moments}}

We only prove $(i)$ since the other items can be deduced with similar arguments as the ones given for the proof of Proposition 2.2 in \cite{BC}.
The Laplace transform  of $X_t$ has the explicit expression (we refer to Li \cite{Li}, see also Section 3.1 in \cite{JiaoEtAl} and Proposition 2.1. in \cite{BarczyTransformeeLaplace})
\begin{equation} \label{E:laplace}
\E(e^{-u X_t}) = \exp \left(-x_0 v_t(u) - \int_{v_t(u)}^u \frac{F(z)}{R(z)} dz \right),
\end{equation}
where $t \to v_t(u)$ is the unique locally bounded solution of 
\begin{equation} \label{eq:Laplace Eq Diff}
    \frac{\partial}{\partial t} v_t(u) = - R(v_t(u)), \quad v_0(u)=u.
\end{equation}
For the $\alpha$-CIR model, the branching mechanism and the immigration rate are given by
\[R(z)= \frac{\sigma^2}{2} z^2 + \frac{\overline{\delta}^\alpha}{\alpha} z^\alpha +bz, \quad F(z)=az, \quad z \in [0, + \infty),\]
with $\overline{\delta}=\delta^{1/ \alpha} (\alpha/| \cos(\frac{\pi \alpha}{2}) |)^{1/ \alpha}$. Setting $z_0 = \inf \{ z>0, \; R(z) \geq 0 \}$, $z_0 = 0$ when $b \geq 0$ and $z_0 > 0$ when $b < 0$ with $z_0 < + \infty$ as $\lim_{z \to + \infty} R(z) = + \infty$. From Corollary 3.2 of Li \cite{Li}, $\lim_{t \to \infty} v_t(u) = z_0$ and $\forall \; u > z_0$, $t \to v_t(u)$ is decreasing. 
Now we have $\forall t \geq 0$
\[ \E(\frac{1}{X_t^p}) = C_p \int_{\mathbb{R}_+} u^{p-1} \E(e^{-u X_t}) du\leq C_p(z_0) +  C_p  \int_{z_0+1}^{+ \infty} u^{p-1} \E(e^{-u X_t}) du.\]
We fix $\epsilon \in (0, 1)$ and for $u \geq z_0 + 1$ we set $t_u^\epsilon = \inf \{ t\geq 0, v_t(u) \leq u^\epsilon \}$. With $u_{0, \epsilon} = \max( z_0 + 1,z_0^{1/\epsilon} )$
\begin{align}
    \sup_{t \geq 0} \; \int_{u_{0, \epsilon}}^{+ \infty} u^{p-1} \E(e^{-u X_t}) du \leq \int_{u_{0, \epsilon}}^{+ \infty} u^{p-1} \left[ \sup_{t \in [0, t_u^\epsilon]} \;\E(e^{-u X_t}) + \sup_{t \geq t_u^\epsilon} \;\E(e^{-u X_t}) \right] du.
\end{align}
Using that $ t \to v_t(u)$ is non-increasing for $u > z_0$, we obtain from \eqref{E:laplace} and observing that $v_{t_u^\epsilon}(u) = u^\epsilon$
\begin{align*} 
    \sup_{t\in [0, t_u^\epsilon]} \E(e^{-u X_t}) &\leq \sup_{t\in [0, t_u^\epsilon]} \exp(-x_0 v_t(u) ) \leq \exp (-x_0 u^\epsilon ),\\
    \sup_{t \geq t_u^\epsilon} \E(e^{-u X_t}) &\leq \sup_{t \geq t_u^\epsilon} \exp \left(- \int_{v_t(u)}^u \frac{F(z)}{R(z)} dz\right) 
    \leq \exp \left(- \int_{u^\epsilon}^u \frac{F(z)}{R(z)} dz\right).
\end{align*}
For $z \in [u^\epsilon, u]$
\begin{align*}
\frac{F(z)}{R(z)}  \geq \frac{2 a}{\sigma^2} \frac{1}{z(1 + \frac{\overline{\delta}^\alpha 2}{\alpha \sigma^2 z^{2 - \alpha}})+ \frac{2b}{\sigma^2}} 
\geq \frac{2 a}{\sigma^2} \frac{1}{z g(u)+ \frac{2b}{\sigma^2}}, \;  \text{with} \; 
g(u)=1 + \frac{\overline{\delta}^\alpha 2}{\alpha \sigma^2 u^{\epsilon(2 - \alpha)}}.
\end{align*}
Hence integrating the previous inequality
\begin{align*}
    \int_{u^\epsilon}^u \frac{F(z)}{R(z)} dz 
     \geq \frac{2 a}{\sigma^2 g(u)} \ln \left(\frac{u g(u) + \frac{2b}{\sigma^2}}{u^\epsilon g(u) + \frac{2b}{\sigma^2}} \right)
     = \frac{2 a}{\sigma^2 g(u)} [\ln( u^{1- \epsilon}) + h(u)],
\end{align*}
with
$$
h(u)=\ln \left(\frac{ g(u) + \frac{2b}{u\sigma^2}}{ g(u) + \frac{2b}{u^\epsilon\sigma^2}} \right).
$$
We have $g(u)\geq 1$, $\lim_{u \to + \infty} g(u) = 1$ and $\lim_{u \to + \infty} h(u) = 0$, then
\begin{align*}
    \exp \left( - \int_{u^\epsilon}^u \frac{F(z)}{R(z)} dz \right) 
  &  \leq C_{\epsilon}/u^{(1-\epsilon)\frac{2a}{\sigma^2 g(u)} }.
\end{align*}
If $1 \leq p < \frac{2a}{\sigma^2}$, we choose $\epsilon > 0$ such that $p - \frac{2a}{\sigma^2} + \epsilon \frac{2a}{\sigma^2} < 0$, then we deduce
\[ \int_{u_{0, \epsilon}}^{+ \infty} u^{p-1} \exp \left(- \int_{u^\epsilon}^u \frac{F(z)}{R(z)} dz\right)du < + \infty, \]
and this allows us to conclude.

\subsubsection{Proof of Lemma \ref{L:ramenerapplati}}

We introduce a little more notation. We set
$$
a_t={\bf 1}_{\{ (i +1)\Delta_n\leq t \leq (i+2) \Delta_n \} }-{\bf 1}_{\{ i \Delta_n\leq t < (i+1) \Delta_n\} }, \quad t \in [i \Delta_n, (i+2) \Delta_n].
$$
With this notation, we have 
$$
\Delta^{s,n}_{i} X=\int_{i \Delta_n}^{(i+2) \Delta_n} a_t dX_t \quad \text{and} \quad
 \Delta^{s,n}_{i} \overline{X}=\sigma_0 X_{i \Delta_n}^{1/2} \int_{i \Delta_n}^{(i+2) \Delta_n} a_t dB_t + \delta_0^{1/ \alpha_0} X_{i \Delta_n}^{1/\alpha_0} \int_{i \Delta_n}^{(i+2) \Delta_n} a_t dL^{\alpha_0}_t.
$$
We then deduce the decomposition $ \rho_i^{n}  =  \overline{\rho}_i^{n} + \theta(1)_i^n + \theta(2)_i^n + \theta(3)_i^n$ with
\begin{align*}
    \theta(1)_i^n & = \sigma_0\frac{u_n}{\sqrt{\Delta_n}\sqrt{X_{i \Delta_n}}}  \int_{i\Delta_n}^{(i+2)\Delta_n} a_t  (X_t^{1/2} - X_{i\Delta_n}^{1/2})dB_t,\\
    \theta(2)_i^n & = \delta_0^{1/\alpha_0}\frac{u_n}{\sqrt{\Delta_n}\sqrt{X_{i \Delta_n}}}  \int_{(\Delta_n}^{(i+2)\Delta_n} a_t  (X_t^{1/\alpha_0} - X_{i\Delta_n}^{1/\alpha_0})dL^{\alpha_0}_t,\\
    \theta(3)_i^n & = -b_0\frac{u_n}{\sqrt{\Delta_n}\sqrt{X_{i \Delta_n}}} \int_{i\Delta_n}^{(i+2)\Delta_n}  a_t(X_t - X_{i\Delta_n})dt .
\end{align*}
We have the following estimates 
\begin{align}
 \forall p \in [1,\alpha_0), \; &\E_{i} (|\theta(1)_i^n|^p )  \leq  C_p u_n^p \Delta_n^{p/2}(1+ \frac{1}{X_{i\Delta_n}^{p}} ), \label{eq:theta12}\\
 \forall p \in [1,\alpha_0), \;   & \E_{i}(|\theta(2)_i^n|^p ) \leq C_p u_n^p\Delta_n^{p/\alpha_0} (1+\frac{1}{X_{i\Delta_n}^{p/2+p(1 - 1/ \alpha_0)}} + X_{i\Delta_n}^{p/\alpha_0-p/2}), \label{eq:theta22}\\
 \forall p \in [1,\alpha_0), \;  &  \E_{i}(|\theta(3)_i^n|^p )  \leq C_p u_n^p \Delta_n^p (1+ \frac{1}{X_{i\Delta_n}^{p/2}} + X_{i\Delta_n}^{p/2}). \label{eq:theta32}
\end{align}
We deduce easily \eqref{eq:theta22} and \eqref{eq:theta32} from Proposition \ref{Th:Moments},
\eqref{eq:theta12} follows from Burkholder's inequality, Proposition \ref{Th:Moments} and the bound
$$
\forall x, y \in (0, + \infty) \; |y^{1/2}-x^{1/2}| \leq C|y-x | / x^{1/2}.
$$
 Combining the estimates \eqref{eq:theta12}, \eqref{eq:theta22}, \eqref{eq:theta32} and $|f(x+y) - f(x)| \leq C |y|$ as $f^{\prime}$ is bounded, we deduce immediately \eqref{eq:cos-cosQ}.

We now prove \eqref{eq:cos-cos}. With the previous notation, we have the decomposition
\begin{align*}
    f( \rho_i^{n}) - f(  \overline{\rho}_i^{n})  = &
     f( \rho_i^{n}) - f( \overline{\rho}_i^{n} + \theta(1)_i^n + \theta(2)_i^n) 
    +   f( \overline{\rho}_i^{n} + \theta(1)_i^n + \theta(2)_i^n) - f( \overline{\rho}_i^{n} + \theta(1)_i^n) \\
  &  +  f( \overline{\rho}_i^{n} + \theta(1)_i^n) - f( \overline{\rho}_i^{n}),
\end{align*}
and since $f^{\prime}$ is bounded it yields
$$
| \E_i (f( \rho_i^{n}) - f(  \overline{\rho}_i^{n})) | \leq C(\E_i | \theta(3)_i^n| + \E_i | \theta(2)_i^n| +| \E_i (f( \overline{\rho}_i^{n} + \theta(1)_i^n) - f( \overline{\rho}_i^{n}))|).
$$
Using  \eqref{eq:theta22} and \eqref{eq:theta32} with $p=1$
\begin{align}
   \E_i | \theta(3)_i^n| + \E_i | \theta(2)_i^n| 
     \leq C u_n \Delta_n^{1/\alpha_0} (1 + \frac{1}{X_{i\Delta_n}^{3/2 - 1/ \alpha_0}} + X_{i\Delta_n}^{1/2})   \label{pr:cos-cos1}.
\end{align}
It remains to bound $| \E_i (f( \overline{\rho}_i^{n} + \theta(1)_i^n) - f( \overline{\rho}_i^{n}))|$. Since $f^{\prime \prime}$ is bounded, we have
\begin{align*}
|\E_{i}  ( f( \overline{\rho}_i^{n} + \theta(1)_i^n) - f( \overline{\rho}_i^{n}))| \leq |\E_{i} (\theta(1)_i^n f^{\prime}( \overline{\rho}_i^{n}) )| + C \E_i |\theta(1)_i^n|^2.
\end{align*}
To  finish the proof, we show the following estimates
\begin{equation}\label{pr:cos-cos2}
   \E_i |\theta(1)_i^n|^2 \leq C u_n^2 \Delta_n \left(1+\frac{1}{X_{i\Delta_n}}\left[1 + \frac{1}{\Delta_n}\E_{i} \int_{i\Delta_n}^{(i+2)\Delta_n} \frac{dv}{X_v} \right]\right),
\end{equation}
and for $\epsilon >0$
\begin{align}
    | \E_{i} (\theta(1)_i^n  f^{\prime}( \overline{\rho}_i^{ n})) |
    \leq C _{\epsilon} u_n \Delta_n^{1/ \alpha_0- \epsilon} \left(1+\frac{1}{X_{i\Delta_n}^{1/2}}\left[1 + \frac{1}{\Delta_n}\E_{i} \int_{i\Delta_n}^{(i+2)\Delta_n} \frac{dv}{X_v^{1/2}} \right]\right) .\label{eq:cos-cosEtape3}
\end{align}
Collecting \eqref{pr:cos-cos1}, \eqref{pr:cos-cos2} and \eqref{eq:cos-cosEtape3}, we deduce \eqref{eq:cos-cos}.

\noindent
\underline{Proof of \eqref{pr:cos-cos2}}
We write It\^{o}'s formula for the function $x \to \sqrt{x}$ which is $\mathcal{C}^2$ on $(0, +\infty)$ 
\[ \sqrt{X_t} - \sqrt{X_s} = \int_s^t \overline{b}_v dv + \int_s^t \frac{\sigma_0}{2} dB_v
    + \int_s^t \int \overline{c}_{v-} \tilde{N}(dv, dz),\]
where
\begin{align*}
    \overline{b}_v & = \frac{(a_0-b_0X_v )}{2 X_v^{1/2}} - \frac{\sigma_0^2 }{8 X_v^{1/2}} 
     + \int( \sqrt{X_{v-} + \delta_0^{1/\alpha_0} X_{v-}^{1/\alpha_0}z} - \sqrt{X_{v-}} - \frac{1}{2 X_{v-}^{1/2}} \delta_0^{1/\alpha_0} X_{v-}^{1/\alpha_0}z ) F^{\alpha_0}(z) dz, \\
    \overline{c}_{v-} & = (X_{v-} + \delta_0^{1/\alpha_0} X_{v-}^{1/\alpha_0}z)^{1/2} - (X_{v-})^{1/2}.
\end{align*}
Moreover, we check that $|\overline{b}_v| \leq C (1 +\frac{1}{\sqrt{X_v}} + \sqrt{X_v})$ and that 
$
\vert \overline{c}_v | \leq C(1+\sqrt{X_v}) (\sqrt{z}{\bf 1}_{\{z>1\}} +z {\bf 1}_{\{0<z \leq 1\}}).
$
We obtain  the decomposition
\begin{equation} \label{eq:Dtheta1}
\theta(1)_i^n=\mu(1)_i^n + \mu(2)_i^n + \mu(3)_i^n ,
\end{equation}
with
\begin{align*}
 \mu(1)_i^n   &  = \frac{u_n \sigma_0}{\sqrt{\Delta_n X_{i \Delta_n} }} \int_{i\Delta_n}^{(i+2)\Delta_n} a_t (\int_{i\Delta_n}^t \overline{b}_v dv )dB_t, \\
\mu(2)_i^n &    = \frac{u_n \sigma_0}{\sqrt{\Delta_n X_{i \Delta_n}}} \frac{\sigma_0}{2} \int_{i\Delta_n}^{(i+2)\Delta_n} a_t (B_t - B_{i\Delta_n})dB_t, \\
\mu(3)_i^n   &  = \frac{u_n \sigma_0}{\sqrt{\Delta_n X_{i \Delta_n}}} \int_{i\Delta_n}^{(i+2)\Delta_n} a_t\left( \int_{i \Delta_n}^t \int \overline{c}_{v-} \tilde{N}(dv, dz) \right) dB_t.
\end{align*}
From Burkholder's inequality  and Proposition  \ref{Th:Moments} (ii), we have
\begin{align}
\forall p \in & [1,2], \,  \E_i | \mu(1)_i^n|^p \leq C_p u_n^p \Delta_n^p \left(1+\frac{1}{X_{i\Delta_n}^{p/2}}\left[1 + \frac{1}{\Delta_n}\E_{i} \int_{i\Delta_n}^{(i+2)\Delta_n} \frac{dv}{X_v^{p/2}} \right]\right),  \label{eq:mu1}\\
\forall p & \geq 1 ,\,  \E_i | \mu(2)_i^n|^p \leq C_p \frac{u_n^p }{X_{i\Delta_n}^{p/2}} \Delta_n^{p/2} , \label{eq:mu2}\\
\forall p & \in (\alpha_0,2], \;  \E_i | \mu(3)_i^n|^p \leq C_p u_n^p \Delta_n \left(1+\frac{1}{X_{i\Delta_n}^{p/2}} \right). \label{eq:mu3}
\end{align}
Note that to obtain \eqref{eq:mu3}, we have used 
$$
\E_i \sup_{i \Delta_n \leq t \leq (i+2) \Delta_n} |\int_{i \Delta_n}^t \int \overline{c}_{v-}  \tilde{N}(dv, dz)|^p\leq C_p \E_i \int_{i\Delta_n}^{(i+2)\Delta_n} \int |\overline{c}_{v-}|^p \overline{N}(dv,dz),
$$
combined with $\vert \overline{c}_v | \leq C(1+\sqrt{X_v}) (\sqrt{z}{\bf 1}_{\{z>1\}} +z {\bf 1}_{\{0<z \leq 1\}})$. We then deduce \eqref{pr:cos-cos2} from the previous bounds with $p=2$.

\noindent
\underline{Proof of \eqref{eq:cos-cosEtape3}}
Since $f^{\prime}$ is bounded, we have using \eqref{eq:Dtheta1} and H\"older's inequality with $\alpha_0 <p\leq 2$
\begin{equation} \label{eq:mu2Wc}
  | \E_{i} (\theta(1)_i^n  f^{\prime}( \overline{\rho}_i^{ n})) | \leq C(\E_i | \mu(1)_i^n| + |\E_{i} (\mu(2)_i^n  f^{\prime}( \overline{\rho}_i^{ n})) | + \E^{1/p}_i | \mu(3)_i^n|^p).
\end{equation}
It remains to consider $\E_{i} (\mu(2)_i^n  f^{\prime}( \overline{\rho}_i^{ n}))$. We split $\overline{\rho}_i^{ n}$ in two parts
(Brownian part and pure-jump part)
$$
\overline{\rho}_i^{n}=\frac{\sigma_0  u_n}{\sqrt{\Delta_n}}\Delta^{s,n}_{i} B  + 
\frac{\delta_0^{1/\alpha_0} X_{i\Delta_n}^{1/\alpha_0} u_n}{\sqrt{\Delta_n} \sqrt{X_{i \Delta_n}}} \Delta^{s,n}_{i} L^{\alpha_0} := \overline{\rho}_i^{1, n}+\overline{\rho}_i^{2, n},
$$
and write 
$$
\E_{i} (\mu(2)_i^n f^{\prime}( \overline{\rho}_i^{ n}))=\E_i (\mu(2)_i^nf^{\prime}( \overline{\rho}_i^{1, n}))+ \E_i  (\mu(2)_i^n (f^{\prime}( \overline{\rho}_i^{ n})-f^{\prime}( \overline{\rho}_i^{1, n}))).
$$
Since $f^{\prime}$ is odd, an explicit computation gives $\E_i (\mu(2)_i^nf^{\prime}( \overline{\rho}_i^{1, n}))=0$. For the second term, we use successively  that $f^{\prime \prime}$ is bounded, H\"older's inequality with $1 < q <\alpha_0$ and the scaling property of $L^{\alpha_0}$ to obtain
\begin{align}
\E_i | \mu(2)_i^n (f^{\prime}( \overline{\rho}_i^{ n})-f^{\prime}( \overline{\rho}_i^{1, n}))| & \leq C \E_i | \mu(2)_i^n\overline{\rho}_i^{2, n} | 
 \leq C_q \frac{u_n}{\sqrt{\Delta_n}}(1+ X_{i \Delta_n}^{1/2})  \E_i^{\frac{1}{q'}} | \mu(2)_i^n|^{q'} \E^{\frac{1}{q}}_i | \Delta^{s,n}_{i} L^{\alpha_0} |^q  \nonumber
\\
&  \leq C_q u_n^2(1+ \frac{1}{X_{i \Delta_n}^{1/2}} )\Delta_n^{1/ \alpha_0}. \label{eq:mu2W}
\end{align}
We finally deduce  \eqref{eq:cos-cosEtape3} from  \eqref{eq:mu2Wc} (using \eqref{eq:mu1} with $p=1$ and \eqref{eq:mu3} with  $p=\alpha_0+ \epsilon$) and \eqref{eq:mu2W}.

\subsubsection{Proof of Lemma \ref{L:expan}}

We assume that $\theta=(\sigma^2, \delta, \alpha) \in A$ a compact subset of $\Theta$. 
We denote by $g_{\alpha}$ the density of $S_1^{\alpha}$, by $\Phi$ the density of the standard Gaussian distribution and we set 
$a_n(x, \theta)=(2 \delta )^{1/\alpha} u_n (x \Delta_n)^{1/\alpha- 1/2}$. Then
\begin{eqnarray*}
P_n f(x, \theta )  
  = 2 \int_{\R_+} f(v) dv \int_{\R} g_{\alpha} \left( \frac{v-y}{a_n(x, \theta) }\right)
 \frac{1}{a_n(x, \theta)}
  \Phi\left( \frac{y}{\sqrt{2} u_n \sigma }\right) \frac{1}{\sqrt{2} u_n \sigma }dy.
\end{eqnarray*}
Using successively a change of variables and Fubini's Theorem yields
\begin{align*}
P_n f(x, \theta ) &  = 2 \int_{\R} \Phi(z )dz \int_{\R_+} f(v) g_{\alpha} \left( \frac{v-\sqrt{2} u_n \sigma z}{a_n(x, \theta) }\right)
 \frac{1}{a_n(x, \theta)} dv .
\end{align*}
Recalling that $f=0$ on $[-\eta, \eta]$, we split the previous integral in two parts $P_n f(x, \theta )=I_n^{1} + I_n^{2}$ where
\begin{align*}
I_n^{1} =2 \int_{\{ |z] > \frac{\eta}{2 \sqrt{2} u_n \sigma}\}} \Phi(z )dz \int_{\R_+} f(v) g_{\alpha} \left( \frac{v-\sqrt{2} u_n \sigma z}{a_n(x, \theta) }\right)
 \frac{1}{a_n(x, \theta)} dv, \\
I_n^{2}= 2 \int_{\{ |z] \leq \frac{\eta}{2 \sqrt{2} u_n \sigma}\}} \Phi(z )dz \int_{\{v>\eta\}} f(v) g_{\alpha} \left( \frac{v-\sqrt{2} u_n \sigma z}{a_n(x, \theta) }\right)
 \frac{1}{a_n(x, \theta)} dv.
\end{align*}
By assumption, $u_n^2=\overline{u}_n /\ln(1/ \Delta_n)$ with $\overline{u}_n \rightarrow 0$, so we deduce immediately, since $f$ is bounded, that
\begin{equation} \label{eq:BI1}
| I_n^1| \leq C \exp(-\frac{\eta^2}{ 16 u_n^2 \sigma^2 }) \leq C \Delta_n^{C' /\overline{u}_n},
\end{equation}
where $C$ and $C'$ do not depend on $\theta$. Turning to $I_n^2$, we observe that for $|z| \leq \frac{\eta}{2 \sqrt{2} u_n \sigma}$ and $v >\eta$,
$$
\frac{v-\sqrt{2} u_n \sigma z}{a_n(x, \theta) } \geq \frac{\eta}{2 a_n(x, \theta)} >0.
$$ 
Using the asymptotic expansion of $g_{\alpha}$ (see Sato \cite{Sato}), we have
$$
\forall y >0 \quad | g_{\alpha} (y)- \frac{c_{\alpha}}{2y^{\alpha+1}}| \leq \frac{C}{y^{2 \alpha+1}},  \; \text{with} \; c_{\alpha} \; \text{ given by \eqref{eq:c-alpha}}
$$
and we deduce
$$
g_{\alpha} \left( \frac{v-\sqrt{2} u_n \sigma z}{a_n(x, \theta) }\right)=\frac{c_{\alpha} a_n(x, \theta)^{\alpha+1}}{2 v^{\alpha+1}} \frac{1}{(1-\sqrt{2} \sigma u_n z/v)^{\alpha+1}}(1+ R^0_n(z,v)),
$$
with 
$$
|R^0_n(z,v)| \leq C\frac{a_n(x, \theta)^{\alpha}}{ v^{\alpha}} \frac{1}{(1-\sqrt{2} \sigma u_n z/v)^{\alpha}} \leq C\frac{a_n(x, \theta)^{\alpha}}{ v^{\alpha}}, 
$$
where we used $| \sqrt{2} \sigma u_n z/v| \leq 1/2$ for the last inequality. Moreover, we have the first order expansion
$$
\frac{1}{(1-\sqrt{2} \sigma u_n z/v)^{\alpha+1}}=1+ R_n^1(z,v), \; \text{with} \; |R_n^1(z,v)| \leq C u_n z/v.
$$
Reporting these results in $I_n^2$, we obtain
\begin{eqnarray*}
I_n^2  = &  \int_{\{ |z] \leq \frac{\eta}{2 \sqrt{2} u_n \sigma}\}} \Phi(z )dz \int_{\{v>\eta\}} f(v) \frac{c_{\alpha} a_n(x, \theta)^{\alpha}}{ v^{\alpha+1}} (1+R_n^1(z,v))(1+ R^0_n(z,v))dv \\
 = & c_{\alpha} a_n(x, \theta)^{\alpha}\left( \int_{\{v>\eta\}}  \frac{f(v)}{ v^{\alpha+1}} dv +  \int_{\{ |z] > \frac{\eta}{2 \sqrt{2} u_n \sigma}\}} \Phi(z )dz \int_{\{v>\eta\}}  \frac{f(v)}{ v^{\alpha+1}} dv \right. \\
& \left.+ \int_{\{ |z] \leq \frac{\eta}{2 \sqrt{2} u_n \sigma}\}} \Phi(z )dz \int_{\{v>\eta\}}\frac{f(v) }{ v^{\alpha+1}} [R_n^1(z,v)(1+ R^0_n(z,v))+R^0_n(z,v)]dv \right).
\end{eqnarray*}
This finally gives
\begin{equation} \label{eq:In2}
I_n^2 = c_{\alpha} a_n(x, \theta)^{\alpha} \left( \int_{\{v>\eta\}}  \frac{f(v)}{ v^{\alpha+1}} dv +R_n \right),
\end{equation}
with 
$$
| R_n| \leq C (\Delta_n^{C' /\overline{u}_n} + u_n + u_n^{\alpha} (x \Delta_n)^{1- \alpha/2}) \leq C(1 + \sqrt{x}) (\Delta_n^{C' /\overline{u}_n}+u_n).
$$
From \eqref{eq:BI1} and \eqref{eq:In2}, we deduce the result of Lemma \ref{L:expan}.

\subsubsection{Proof of Proposition \ref{Th:TCL}}

The proof consists first to replace, in the expression of $F_n(\theta_0)$,  $\rho_{2i}^{n}$ by $\overline{\rho}_{2i}^{n}$ (defined by \eqref{eq:rho}) by using Lemma \ref{L:ramenerapplati} and next to study the convergence in distribution of 
$$
A_n(\theta_0) \frac{1}{n} \sum_{i=0}^{\lfloor \frac{n}{2} \rfloor - 1} [ f( \overline{\rho}_{2i}^n) 
- P_n f(X_{2i \Delta_n}, \theta_0) ].
$$
We set 
\begin{equation} \label{E:zeta}
 \zeta_{2i}^{n} = A_n(\theta_0) \frac{1}{n} [ f (\overline{\rho}_{2i}^n) 
- P_n f(X_{2i \Delta_n}, \theta_0)],
\end{equation}
hence $\zeta_{2i}^{n} =(\zeta_{2i}^{k,n})_{1 \leq k \leq 3} $ with
\begin{align*}
    \zeta_{2i}^{1,n} & = \frac{1}{u_n^2 \sqrt{n}} [ f_1 (\overline{\rho}_{2i}^n) 
- P_n f_1(X_{2i \Delta_n}, \theta_0) ], \\
    \zeta_{2i}^{k,n} & = \frac{1}{u_n^{\alpha_0/2}\Delta_n^{1/2 - \alpha_0/4} \sqrt{n}} [ f_k( \overline{\rho}_{2i}^n) 
- P_n f_k (X_{2i \Delta_n}, \theta_0)], \quad \text{for} \quad k \in \{2,3\}.
\end{align*}

\noindent
1. \underline{ Case $n \Delta_n =1$.}
We first remark that for any $0 < p \leq 1$ we have the convergence in probability
$$
\frac{1}{n} \sum_{i=0}^{\lfloor \frac{n}{2} \rfloor - 1} X_{2 i \Delta_n}^p \xrightarrow[n \to \infty]{\PP} \frac{1}{2}\int_0^1 X_t^p dt.
$$
 Moreover from Proposition \ref{Th:Moments} and recalling  $a_0 > \sigma_0^2$, we have  for $1 \leq p \leq 2$
\begin{align} \label{E:momneg1}
\E\left( \frac{1}{n}  \sum_{i=0}^{\lfloor \frac{n}{2} \rfloor - 1} \frac{1}{X_{i\Delta_n}^p} \right) \leq  \sup_{t \geq 0} \E (\frac{1}{X_t^p}) < + \infty, 
\end{align} 
and using $1/(X_{i\Delta_n}X_v) \leq 1/(2X_{i\Delta_n}^2) +1/(2X_v^2) $
\begin{align} \label{E:momneg2}
\E\left( \frac{1}{n} \sum_{i=0}^{\lfloor \frac{n}{2} \rfloor - 1} \frac{1}{\Delta_n}\E_{i} \int_{i\Delta_n}^{(i+2)\Delta_n} \frac{dv}{X_{i\Delta_n}X_v} \right) \leq 
\sup_{t \geq 0} \E (\frac{1}{X_t^2}) < + \infty.
\end{align}
Next from Lemma \ref{L:ramenerapplati}  and  choosing $\epsilon>0$ such that 
$
1/\alpha_0- \epsilon >1/2  $
and $1/\alpha_0- \epsilon  >1- \alpha_0/4$,
which is possible since $1 < \alpha_0<2$, we have the convergence in probability
\begin{equation*} 
  A_n(\theta_0) \frac{1}{n} \sum_{i=0}^{\lfloor \frac{n}{2} \rfloor - 1} | \E_{2i} (f( \rho_{2i}^{n}) - f( \overline{\rho}_{2i}^{n}) ) | \to 0.
\end{equation*}
Observing that for $f$ bounded 
$\E_{2i} | f( \overline{\rho}_{2i}^n) - P_n f(X_{2i \Delta_n}, \theta_0) |^{2} \leq  C \E_{2i} | f( \overline{\rho}_{2i}^n) - P_n f(X_{2i \Delta_n}, \theta_0) |^p$ with $p< \alpha_0$, we check with  Lemma \ref{L:ramenerapplati}
\begin{align*}
\frac{n}{u_n^4} \frac{1}{n^2} & \sum_{i=0}^{\lfloor \frac{n}{2} \rfloor - 1} \E_{2i} | f_1( \rho_{2i}^{n}) - f_1( \overline{\rho}_{2i}^{n}) |^2\to 0, \\
 \frac{n}{u_n^{\alpha_0}\Delta_n^{1 - \alpha_0/2}} & \frac{1}{n^2}  \sum_{i=0}^{\lfloor \frac{n}{2} \rfloor - 1} \E_{2i} | f_k( \rho_{2i}^{n}) - f_k( \overline{\rho}_{2i}^{n}) |^2\to 0 \quad k \in \{2,3\}.
\end{align*}
Consequently, we have the convergence in probability 
\begin{align} \label{E:app}
    A_n(\theta_0) \frac{1}{n} \sum_{i=0}^{\lfloor \frac{n}{2} \rfloor - 1} [ f( \rho_{2i}^{n}) - f( \overline{\rho}_{2i}^{n}) ] \to 0,
\end{align}
and the proof of  Proposition \ref{Th:TCL} reduces to show the stable convergence
\[ A_n(\theta_0) \frac{1}{n} \sum_{i=0}^{\lfloor \frac{n}{2} \rfloor - 1} [ f( \overline{\rho}_{2i}^n) 
- P_n f(X_{2i \Delta_n}, \theta_0) ]\xRightarrow[n \to \infty]{\mathcal{L}-s} \Sigma(\theta_0)^{1/2}\mathcal{N},\]
for $\mathcal{N}$ a standard Gaussian variable independent of $\Sigma(\theta_0)$ defined by \eqref{E:sigma}. 
According to Theorem IX.7.26 in Jacod and Shiryaev \cite{JacodShiryaev}, it is sufficient to show the following convergences in probability
(with $\zeta_{2i}^{ n}$ defined by \eqref{E:zeta})
\begin{align}
    & \sum_{i=0}^{\lfloor \frac{n}{2} \rfloor - 1}  \E_{2i} (\zeta_{2i}^{ n} ) \to 0,  \label{eq:point1cvTCL} \\
    & \sum_{i=0}^{\lfloor \frac{n}{2} \rfloor - 1}   \E_{2i} (|\zeta_{2i}^{k, n}|^{p}) \to 0   \quad \text{for some} \; p>2,  \quad k=1,2,3, \label{eq:point2cvTCL} \\
    & \sum_{i=0}^{\lfloor \frac{n}{2} \rfloor - 1}   \E_{2i} (\zeta_{2i}^{k, n} \zeta_{2i}^{l, n}) \to \Sigma_{k,l}(\theta_0) 
     \quad \text{for} \quad k,l=1,2,3, \label{eq:point3cvTCL} \\
    & \sum_{i=0}^{\lfloor \frac{n}{2} \rfloor - 1}  \E_{2i}(\zeta_{2i}^{k, n}  (M_{2(i+1)\Delta_n} -M_{2i \Delta_n})) \to 0  \quad \text{for} \quad k=1,2,3,  \label{eq:point4cvTCL}
\end{align}
where the last convergence holds for any square integrable martingale $M$.
The convergence \eqref{eq:point1cvTCL} is immediate since $\E_{2i}  f( \overline{\rho}_{2i}^n) 
=P_n f(X_{2i \Delta_n}, \theta_0) $ and  \eqref{eq:point4cvTCL} is addressed in Lemma \ref{L:AccM} below.

We deduce  \eqref{eq:point2cvTCL} from Lemma \ref{L:expan} for $k=2$ and $k=3$. For $k=1$,
we recall that using the notation \eqref{eq:rho} with $f_1(x)= \cos(x)$
\begin{equation} \label{eq:cosrho}
P_nf_1(X_{2i \Delta_n}, \theta_0)= \E_{2i} f_1( \overline{\rho}_{2i}^{n}) =\exp(-u_n^2 \sigma_0^2- 2 \delta_0 X_{2 i \Delta_n}^{1-1/ \alpha_0}u_n^{\alpha_0} \Delta_n^{1-1/ \alpha_0}),
\end{equation}
and
$$
\E_{2i} f_1( 2\overline{\rho}_{2i}^{n})=\exp(-4u_n^2 \sigma_0^2- 2^{1+ \alpha_0} \delta_0 X_{2 i \Delta_n}^{1-1/ \alpha_0}u_n^{\alpha_0} \Delta_n^{1-1/ \alpha_0}).
$$
Using $\cos^2(a) = (1+ \cos(2a) )/2$, we deduce
$$
\E_{2i} (f_1( \overline{\rho}_{2i}^{n})^2)  = (1+ \E_{2i} f_1( 2\overline{\rho}_{2i}^{n}) )/2,
$$
and for $q \geq 2$, since $f_1$ is bounded
\begin{equation}
 \E_{2i }(|f_1(\overline{\rho}_{2i}^{n}) - P_nf_1(X_{2i \Delta_n}, \theta_0)  |^q )  \leq  \E_{2i} (f_1(\rho_{2i}^n)  - \E_{2i} f_1( \overline{\rho}_{2i}^{n}))^2    =  \frac{\E_{2i} f_1( 2\overline{\rho}_{2i}^{n}) - 2 \E^2_{2i} f_1( \overline{\rho}_{2i}^{n}) +1}{2}. \label{eq:cos2}
\end{equation}
We now expand $\E_{2i} f_1( \overline{\rho}_{2i}^{n})$ and $\E_{2i} f_1( 2\overline{\rho}_{2i}^{n})$ around 0 and get that
\begin{equation} \label{eq:expcos}
\E_{2i} f_1( 2\overline{\rho}_{2i}^{n}) - 2 \E^2_{2i} f_1( \overline{\rho}_{2i}^{n}) +1 = 4 u_n^4 \sigma_0^4 + R_n(x, \theta_0), 
\; \text{with} \; |R_n(x, \theta_0)| \leq C(u_n^6+ u_n^{\alpha_0} \Delta_n^{1 - \alpha_0/2})(1 + x).
\end{equation}
This gives \eqref{eq:point2cvTCL} for $k=1$.

We now prove \eqref{eq:point3cvTCL}. From \eqref{eq:cos2} and \eqref{eq:expcos} we have
\begin{align*}
 \sum_{i=0}^{\lfloor \frac{n}{2} \rfloor - 1}   \E_{2i} (\zeta_{2i}^{1, n} )^2  = \frac{1}{n}
    \frac{1}{u_n^4}  \sum_{i=0}^{\lfloor \frac{n}{2} \rfloor - 1}   \frac{\E_{2i} f_1( 2\overline{\rho}_{2i}^{n}) - 2 \E^2_{2i} f_1( \overline{\rho}_{2i}^{n}) +1}{2}
      \to  \sigma_0^4.
\end{align*}
We then apply Lemma \ref{L:expan} to $f_k $, $f_l$  and $f_k f_l$ with $k, l \in \{2,3\}$.  It yields
\begin{align*}
\E_{2i} &[ f_k ( \overline{\rho}_{2i}^n) 
- P_n f_k (X_{2i \Delta_n}, \theta_0) ] [ f_l ( \overline{\rho}_{2i}^n) 
- P_n f_l (X_{2i \Delta_n}, \theta_0) ] \\
&=P_n (f_k f_l )(X_{2i \Delta_n}, \theta_0)
- P_n f_k (X_{2i \Delta_n}, \theta_0) P_n f_l (X_{2i \Delta_n}, \theta_0)  \\
&=
u_n^{\alpha_0} \Delta_n^{1-\alpha_0/2}c_{\alpha_0} \delta_0 X_{2i \Delta_n}^{1-\alpha_0/2} \int \frac{f_k f_l (v)}{|v|^{\alpha_0+1}} dv 
 +u_n^{\alpha_0} \Delta_n^{1-\alpha_0/2} \varepsilon_n(\theta_0,X_{2 i \Delta_n}),  
\end{align*}
where
$$
 |\varepsilon_n(\theta_0, X_{2 i \Delta_n})| \leq (1+X_{2 i \Delta_n}) \varepsilon_n, \quad \text{with} \quad \varepsilon_n \rightarrow 0.
$$
Consequently, we deduce for $k,l \in \{2,3\}$
\begin{align*}
 \sum_{i=0}^{\lfloor \frac{n}{2} \rfloor - 1}   \E_{2i} (\zeta_{2i}^{k, n} \zeta_{2i}^{l, n}) \to 
 \frac{1}{2}c_{\alpha_0} \delta_0 \int_0^1X_{s}^{1-\alpha_0/2} ds \int \frac{f_k f_l (v)}{|v|^{\alpha_0+1}} dv .
 \end{align*}
 We finally   address the cross terms $ \sum_{i=0}^{\lfloor \frac{n}{2} \rfloor - 1}   \E_{2i} (\zeta_{2i}^{1, n} \zeta_{2i}^{k, n})$ for $k \in \{2,3\}$.
 We have
 \begin{align*}
 \E_{2i} & [ (f_1( \overline{\rho}_{2i}^n) 
- P_n f_1(X_{2i \Delta_n}, \theta_0)] [f_k( \overline{\rho}_{2i}^n) 
- P_n f_k (X_{2i \Delta_n}, \theta_0) )] \\
& =P_n (f_1 f_k)(X_{2i \Delta_n}, \theta_0)
- P_n f_1 (X_{2i \Delta_n}, \theta_0) P_n f_k (X_{2i \Delta_n}, \theta_0) . 
\end{align*}
Using  again Lemma \ref{L:expan} for $f_1 f_k$ and $f_k$ with $k \in \{2,3\}$ and observing that $P_n f_1 (X_{2i \Delta_n}, \theta_0)$ is bounded
we have
$$
|P_n (f_1 f_k)(X_{2i \Delta_n}, \theta_0)
- P_n f_1 (X_{2i \Delta_n}, \theta_0) P_n f_k (X_{2i \Delta_n}, \theta_0)| \leq C u_n^{\alpha_0} \Delta_n^{1-\alpha_0/2} (1+X_{2 i \Delta_n}),
$$
and we obtain for $k \in \{2,3\}$,
$$
\sum_{i=0}^{\lfloor \frac{n}{2} \rfloor - 1}   \E_{2i} (\zeta_{2i}^{1, n} \zeta_{2i}^{k, n}) \to 0.
$$
We finally conclude that
$$
\sum_{i=0}^{\lfloor \frac{n}{2} \rfloor - 1}   \E_{2i} (\zeta_{2i}^{ n} (\zeta_{2i}^{n})^T) \to \Sigma(\theta_0).
$$
We have proved equation \eqref{eq:point3cvTCL}. 

We check easily  that $\Sigma(\theta_0)$ is definite positive and we finish the proof of Proposition \ref{Th:TCL} in the case $n \Delta_n=1$ with Lemma \ref{L:AccM} below. 

\begin{lem}\label{L:AccM} Let $M$ be a square-integrable martingale. Let $f_1$ and $f_2$ be even bounded functions. We assume moreover that $f_1$ is $\mathcal{C}^1$ with bounded derivative,  $f_2=0$ on $[-\eta, \eta]$, for $\eta>0$ and that $\Delta_n = 1/n$ and $u_n=1/[\ln(1/ \Delta_n)]^{p}$ with $p>1/2$. Then we have the convergence in probability (where $\overline{\rho}_{2i}^n$ is defined by \eqref{eq:rho} and $\Delta_{2i}^{2,n}M= M_{2(i+1)\Delta_n} -M_{2i \Delta_n}$)
\[ (i) \; \frac{1}{u_n^2 \sqrt{n}} \sum_{i=0}^{\lfloor \frac{n}{2} \rfloor - 1} \E_{2i} [( f_1( \overline{\rho}_{2i}^n) 
- P_n f_1(X_{2i \Delta_n}, \theta_0) ) \Delta_{2i}^{2,n}M ]  \to 0,\]
\[ (ii) \; \frac{1 }{u_n^{\alpha_0/2}\Delta_n^{1/2 - \alpha_0/4} \sqrt{n}} \sum_{i=0}^{\lfloor \frac{n}{2} \rfloor - 1} \E_{2i}  [(f_2 ( \overline{\rho}_{2i}^n) 
- P_n f_2 (X_{2i \Delta_{n}}, \theta_0)) \Delta_{2i}^{2,n}M ]  \to 0.\]
\end{lem}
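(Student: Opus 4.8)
The plan is to establish \eqref{eq:point4cvTCL} by reducing, through a standard density argument, to a total family of martingales and then treating separately the continuous and the purely discontinuous generators. Since the filtration $(\F_t)$ is generated by $B$ and the Poisson measure $N$, the set $\{B\}\cup\{\tilde N(\cdot,g)\}$, with $\tilde N(\cdot,g)_t=\int_0^t\int g(z)\,\tilde N(ds,dz)$ and $g$ ranging over bounded functions supported in some $[a,R]$ with $a>0$, is total in $\mathcal{H}^2$: such $g$ are dense in $L^2(F^{\alpha_0})$ and, together with $B$, generate all square-integrable martingales. The crucial observation is that one may restrict to integrands $g$ \emph{vanishing in a neighbourhood of $0$}, which sidesteps the small-jump singularity of $F^{\alpha_0}$ and is exactly what makes the correlations negligible. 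To pass back to an arbitrary square-integrable $M$, I would invoke the uniform control $\sum_i\E_{2i}((\zeta_{2i}^{k,n})^2)=O_\PP(1)$ already furnished by \eqref{eq:point3cvTCL}, together with the conditional Cauchy--Schwarz bound $|\sum_i\E_{2i}(\zeta_{2i}^{k,n}\Delta^{2,n}_{2i}M')|\le(\sum_i\E_{2i}((\zeta_{2i}^{k,n})^2))^{1/2}(\sum_i\E_{2i}((\Delta^{2,n}_{2i}M')^2))^{1/2}$, the last factor being of order $\|M'\|_{\mathcal{H}^2}$; this is the reduction underlying Theorem IX.7.26 in \cite{JacodShiryaev}.

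For $M=B$ the contribution is \emph{exactly zero}, which is the payoff of the symmetrization. Conditionally on $\F_{2i\Delta_n}$ the increments $\Delta^n_{2i}B$ and $\Delta^n_{2i+1}B$ are i.i.d.\ centred Gaussian, so the symmetrized increment $\Delta^{s,n}_{2i}B=\Delta^n_{2i+1}B-\Delta^n_{2i}B$ and the summed increment $\Delta^{2,n}_{2i}B=\Delta^n_{2i+1}B+\Delta^n_{2i}B$ are uncorrelated, hence independent; since moreover $L^{\alpha_0}\perp B$, the whole variable $\overline{\rho}_{2i}^n$ is conditionally independent of $\Delta^{2,n}_{2i}B$. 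As $\E_{2i}(\Delta^{2,n}_{2i}B)=0$ and $f(\overline{\rho}_{2i}^n)-P_nf(X_{2i\Delta_n},\theta_0)$ is $\F_{2i\Delta_n}$-centred, each summand in $(i)$ and in $(ii)$ vanishes identically.

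For $M=\tilde N(\cdot,g)$ I would compute the per-term covariance by the Mecke (integration-by-parts) formula for Poisson functionals, applied conditionally on $\F_{2i\Delta_n}$ and on the Brownian path, which only shifts $\overline{\rho}_{2i}^n$ by a constant. Adding an atom $(s,z)$ raises $\overline{\rho}_{2i}^n$ by $+c_iz$ if $s$ lies in the second half-interval and by $-c_iz$ if it lies in the first, where $c_i=\delta_0^{1/\alpha_0}u_n X_{2i\Delta_n}^{1/\alpha_0-1/2}/\sqrt{\Delta_n}$ is $\F_{2i\Delta_n}$-measurable. Since $\E_{2i}(\Delta^{2,n}_{2i}M)=0$, this gives
\[ \E_{2i}\big[(f(\overline{\rho}_{2i}^n)-P_nf(X_{2i\Delta_n},\theta_0))\,\Delta^{2,n}_{2i}M\big]=\Delta_n\,\E_{2i}\!\int g(z)\big(f(\overline{\rho}_{2i}^n+c_iz)+f(\overline{\rho}_{2i}^n-c_iz)-2f(\overline{\rho}_{2i}^n)\big)F^{\alpha_0}(dz). \]
Because $g$ vanishes for $z<a$ and $F^{\alpha_0}$ is finite on $[a,R]$, the second difference is bounded by $4\|f\|_\infty$ (only boundedness of $f$ enters), so the right-hand side is $O(\Delta_n)$ uniformly in $i$, without even using a negative moment of $X$. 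Summing the $\lfloor n/2\rfloor$ terms yields a bound of order $n\Delta_n=O(1)$; dividing by the normalizations $u_n^2\sqrt n=u_n^2\Delta_n^{-1/2}$ in $(i)$, respectively $u_n^{\alpha_0/2}\Delta_n^{1/2-\alpha_0/4}\sqrt n=u_n^{\alpha_0/2}\Delta_n^{-\alpha_0/4}$ in $(ii)$, leaves $O(u_n^{-2}\Delta_n^{1/2})$, resp.\ $O(u_n^{-\alpha_0/2}\Delta_n^{\alpha_0/4})$, both of which tend to $0$ since $u_n^{-1}$ grows only polylogarithmically while the powers of $\Delta_n$ decay polynomially.

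The main obstacle is conceptual rather than computational, namely the reduction step. One must recognise that testing against the \emph{full} jump structure (integrands with $g(0)\neq0$) would make the second difference behave like $c_i^2z^2$ over the divergent small-jump region, producing a contribution of order $\Delta_n c_i^{\alpha_0}\sim u_n^{\alpha_0}\Delta_n^{1-\alpha_0/2}X_{2i\Delta_n}^{1-\alpha_0/2}$ per term, whose normalized sum diverges. Restricting to the total subfamily of integrands supported away from $0$---legitimate because those martingales still generate $\mathcal{H}^2$---confines the correlation to rare large jumps and yields the harmless $O(\Delta_n)$ bound. Checking carefully that this restricted family is total, and that the uniform $L^2$ control permits the passage back to arbitrary $M$, is the delicate point; the two explicit cases $M=B$ and $M=\tilde N(\cdot,g)$ are then routine.
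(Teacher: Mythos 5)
Your two explicit computations are sound, and partly nicer than the paper's: the conditional-independence argument for $M=B$ (the sum $\Delta^{2,n}_{2i}B$ and the difference $\Delta^{s,n}_{2i}B$ of the two Brownian increments are independent Gaussians, and $L^{\alpha_0}\perp B$, so each summand vanishes identically) is cleaner than the paper's treatment, which for (i) splits $\overline{\rho}_{2i}^n$ into Brownian and jump parts and uses evenness of $f_1$, and for (ii) with $M=B$ uses H\"older's inequality together with Lemma \ref{L:expan}. Your Mecke computation for $M=\tilde N(\cdot,g)$ with $g$ supported in $[a,R]$, $a>0$, is also correct, as is the $O(\Delta_n)$ per-term bound and the normalization arithmetic. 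The genuine gap is the reduction step, which you yourself flag as the delicate point: the family $\{B\}\cup\{\tilde N(\cdot,g):\ g \text{ bounded, supported away from } 0\}$ is \emph{not} total in $\mathcal{H}^2$. Since $g\mapsto\tilde N(\cdot,g)$ is (up to a constant) an isometry of $L^2(F^{\alpha_0})$ onto a closed subspace of $\mathcal{H}^2$, the closed linear span of your family is exactly $\{aB+\tilde N(\cdot,g):\ a\in\R,\ g\in L^2(F^{\alpha_0})\}$, i.e.\ only constant-coefficient, time-homogeneous integrands. It misses, for instance, $M_t=\int_0^t\int \mathbf{1}_{\{s>1/2\}}\,Y\,g_0(z)\,\tilde N(ds,dz)$ with $Y$ bounded, $\F_{1/2}$-measurable, centered and non-constant: this $M$ is orthogonal in $\mathcal{H}^2$ to every member of your family, hence cannot be approximated by linear combinations of them. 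Your justification ("such $g$ are dense in $L^2(F^{\alpha_0})$ and, together with $B$, generate all square-integrable martingales") is true only in the sense of generation through \emph{stochastic integration with predictable integrands} (martingale representation); your Cauchy--Schwarz passage to a general $M$ requires density of the \emph{linear span}, and these two notions do not coincide. As written, your proof establishes the lemma only for $M$ in the proper closed subspace above.

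The argument can be repaired in two ways. (a) Enlarge the family to stochastic integrals with \emph{simple predictable} integrands, $h_s=\sum_j Y_j\mathbf{1}_{(s_j,s_{j+1}]}(s)$ against $B$ and $G_s(z)=\sum_j Y_j\mathbf{1}_{(s_j,s_{j+1}]}(s)g_j(z)$ against $\tilde N$, with $g_j$ supported away from $0$: the linear span of these \emph{is} dense in $\mathcal{H}^2$, and both of your computations go through on each block $[2i\Delta_n,2(i+1)\Delta_n]\subset(s_j,s_{j+1}]$ because $Y_j$ is then $\F_{2i\Delta_n}$-measurable and factors out of $\E_{2i}$ (the finitely many straddling blocks are negligible). (b) Follow the paper: reduce via Theorem 2.2.15 of \cite{JacodProtter} to $M=B$ or $M$ orthogonal to $B$, represent any such $M$ as $\int\int G_s(z)\tilde N(ds,dz)$ with a \emph{general} predictable $G$, and control the contribution of small $G$ by the truncation $G\mathbf{1}_{\{|G|>\epsilon_n\}}+G\mathbf{1}_{\{|G|\le\epsilon_n\}}$ with $\epsilon_n=\Delta_n^{\alpha_0/8}$, Cauchy--Schwarz against the bound of Lemma \ref{L:expan}, and dominated convergence. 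Note finally that the divergence you predict when the integrand charges small jumps is an artifact of your crude second-difference bound, not a real obstruction: the paper's truncation argument shows that the cross term vanishes for \emph{every} square-integrable $M$, which is what the lemma asserts.
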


\begin{proof}
From Theorem 2.2.15 in \cite{JacodProtter}, we only consider the cases where $M$ is the Brownian Motion $B$ or $M$ is orthogonal to $B$.

(i) Proceeding as in the proof of  \eqref{eq:cos-cosEtape3}, we separate the Brownian component and the pure-jump component  $\overline{\rho}_{2i}^n=\overline{\rho}_{2i}^{1,n}+\overline{\rho}_{2i}^{2,n}$ and  since $f_1$ and $f_1^{\prime}$ are bounded, for $1 \leq p<\alpha_0$
$$
\E_{2i} | f_1( \overline{\rho}_{2i}^n) -f_1( \overline{\rho}_{2i}^{1,n}) |^2 \leq C_p u_n^p \Delta_n^{p(1/ \alpha_0-1/2)} X_{2i \Delta_n}^{p(1/ \alpha_0-1/2)}.
$$
Choosing $p$ close to $\alpha_0$ and using Proposition \ref{Th:Moments} (ii), it yields for $\epsilon >0$
$$
\E | f_1( \overline{\rho}_{2i}^n) -f_1( \overline{\rho}_{2i}^{1,n}) |^2 \leq C_{\epsilon} u_n^{\alpha_0-\epsilon} \Delta_n^{1- \alpha_0/2 - \epsilon}. 
$$
Setting
$$
S_n^1=\frac{1}{u_n^2 \sqrt{n}} \sum_{i=0}^{\lfloor \frac{n}{2} \rfloor - 1} \E_{2i} [( f_1( \overline{\rho}_{2i}^n) 
- f_1( \overline{\rho}_{2i}^{1,n}) ) \Delta_{2i}^{2,n}M ],
$$
we deduce  from Cauchy-Schwarz inequality and the previous bound with $\epsilon$ such that  $1- \alpha_0/2-\epsilon>0$
\begin{align*}
\E|S_n^{1}| &\leq \frac{C_{\epsilon} }{u_n^2 \sqrt{n}} \left(\sum_{i=0}^{\lfloor \frac{n}{2} \rfloor - 1} u_n^{\alpha_0-\epsilon} \Delta_n^{1- \alpha_0/2 - \epsilon}
 \sum_{i=0}^{\lfloor \frac{n}{2} \rfloor - 1} \E (\Delta_{2i}^{2,n}M)^2 \right)^{1/ 2}
  \to 0,
\end{align*}
observing that the second sum is bounded since $M$ is a square-integrable martingale. So the proof of (i) reduces to $S_n^2 \to 0$ where
$$
S_n^2=\frac{1}{u_n^2 \sqrt{n}} \sum_{i=0}^{\lfloor \frac{n}{2} \rfloor - 1} \E_{2i} [ f_1( \overline{\rho}_{2i}^{1,n}) 
 \Delta_{2i}^{2,n}M ].
$$
If $M=B$, using that $f_1$ is even and  
recalling that $\overline{\rho}_{2i}^{1,n}=\frac{\sigma_0  u_n}{\sqrt{\Delta_n}}\Delta^{s,n}_{2i} B $, an explicit calculus gives $\E_{2i} [ f_1( \overline{\rho}_{2i}^{1,n}) 
 \Delta_{2i}^{2,n}M ]=0$. If $M$ is a square integrable martingale orthogonal to $B$, we also have $\E_{2i} [ f_1( \overline{\rho}_{2i}^{1,n}) 
 \Delta_{2i}^{2,n}M ]=0$ from the representation Theorem and It\^{o}'s Formula. 
 
 (ii) We follow the proof of Lemma 8 in \cite{Bull}. We first assume that $M=B$. From H\"{o}lder's inequality with $p>1$, we have
 $$
 |\E_{2i}[f_2 ( \overline{\rho}_{2i}^n) 
 \Delta_{2i}^{2,n}B ]| \leq C\E^{1/p}_{2i}(f_2 ( \overline{\rho}_{2i}^n))^p \sqrt{\Delta_n} \leq C\Delta_n^{(1- \alpha_0/2)/p} \sqrt{\Delta_n}(1+ X_{2i \Delta_n}), 
 $$
 where for the second inequality we applied  Lemma \ref{L:expan} to $f_2^p$. We deduce then (ii) in that case choosing $p<2$.
 
 We now assume that $M$ is orthogonal to $B$. From the representation Theorem (we refer to Chapter III.4 of \cite{JacodShiryaev}), we have for some predictable process $(G_t)$
 $$
 M_t =\int_0^t \int G_s \tilde{N}(ds,dz) \; \text{with} \; \E \int_0^1 \int G_s^2 \overline{N}(ds,dz) < \infty,
 $$
 and also with $(h_t)$ and $(H_t)$ predictable and $(H_t)$ bounded (since $f_2$ is bounded)
 $$
 f_2 ( \overline{\rho}_{2i}^n) 
- P_n f_2 (X_{2i \Delta_{n}}, \theta_0)=\int_{2i \Delta_n}^{2(i+1) \Delta_n} h_s dB_s + \int_{2i \Delta_n}^{2(i+1) \Delta_n} \int H_s \tilde{N}(ds,dz).
 $$ 
 We then deduce from It\^{o}'s Formula
 $$
  \E_{2i}  [(f_2 ( \overline{\rho}_{2i}^n) 
- P_n f_2 (X_{2i \Delta_{n}}, \theta_0)) \Delta_{2i}^{2,n}M ]= \E_{2i}  \int_{2i \Delta_n}^{2(i+1) \Delta_n}  \int H_s G_s  \overline{N}(ds,dz).
 $$
 With the decomposition $G_s =G_s{\bf 1}_{| G_s| > \epsilon_n}  +G_s {\bf 1}_{| G_s| \leq \epsilon_n} $, the proof reduces to study  the convergence of
 \begin{align*}
 T^1_n= \frac{1 }{u_n^{\alpha_0/2}\Delta_n^{1/2 - \alpha_0/4} \sqrt{n}} \sum_{i=0}^{\lfloor \frac{n}{2} \rfloor - 1} \E_{2i}  \int_{2i \Delta_n}^{2(i+1) \Delta_n}  \int H_s G_s {\bf 1}_{| G_s| > \epsilon_n}  \overline{N}(ds,dz), \\
  T^2_n= \frac{1 }{u_n^{\alpha_0/2}\Delta_n^{1/2 - \alpha_0/4} \sqrt{n}} \sum_{i=0}^{\lfloor \frac{n}{2} \rfloor - 1} \E_{2i}  \int_{2i \Delta_n}^{2(i+1) \Delta_n}  \int H_s G_s {\bf 1}_{| G_s| \leq \epsilon_n}  \overline{N}(ds,dz).
 \end{align*}
Since $H$ is bounded, we have immediately
 \begin{align} \label{eq:Tn1}
\E  |T^1_n| &\leq   \frac{C }{u_n^{\alpha_0/2}\Delta_n^{1/2 - \alpha_0/4} \sqrt{n}} \sum_{i=0}^{\lfloor \frac{n}{2} \rfloor - 1} \E  \int_{2i \Delta_n}^{2(i+1) \Delta_n}  \int \frac{G^2_s}{\epsilon_n}  \overline{N}(ds,dz), \nonumber \\
& \leq \frac{C }{\epsilon_n u_n^{\alpha_0/2}\Delta_n^{1/2 - \alpha_0/4} \sqrt{n}} \E \int_0^1 \int G_s^2  \overline{N}(ds,dz) \leq  \frac{C }{\epsilon_n u_n^{\alpha_0/2}\Delta_n^{1/2 - \alpha_0/4} \sqrt{n}} .
\end{align}
With $\epsilon_n= \Delta_n^{\alpha_0/8}$, we  deduce $\E |T_n^1 |\to 0$.
 
We now turn to  $T_n^2$.  We first apply Lemma \ref{L:expan} to $f_2$ to obtain
\begin{align*}
\E_{2i}  \int_{2i \Delta_n}^{2(i+1) \Delta_n}  \int H^2_s  \overline{N}(ds,dz)  \leq  & \E_{2i}  (f_2 ( \overline{\rho}_{2i}^n) 
- P_n f_2 (X_{2i \Delta_{n}}, \theta_0))^2
\leq  C u_n^{\alpha_0} \Delta_n^{1- \alpha_0/2} (1+ X_{2i \Delta_n}).
\end{align*}
 From Cauchy-Schwarz inequality it yields
 \begin{align*}
|\E_{2i} & \int_{2i \Delta_n}^{2(i+1) \Delta_n}  \int H_s G_s {\bf 1}_{| G_s| \leq \epsilon_n}  \overline{N}(ds,dz)| \leq \\
& C u_n^{\alpha_0/2} \Delta_n^{1/2- \alpha_0/4} (1+ \sqrt{X_{2i \Delta_n}})\E^{1/2}_{2i}  \int_{2i \Delta_n}^{2(i+1) \Delta_n}  \int G^2_s {\bf 1}_{| G_s| \leq \epsilon_n}  \overline{N}(ds,dz), 
\end{align*}
 and applying once again Cauchy-Schwarz inequality
\begin{align*}
\E |T_n^2|  \leq  
  C  \left(\E \frac{1}{n} \sum_{i=0}^{\lfloor \frac{n}{2} \rfloor - 1} (1+ X_{2i \Delta_n}) 
 \times\E \sum_{i=0}^{\lfloor \frac{n}{2} \rfloor - 1} \int_{2i \Delta_n}^{2(i+1) \Delta_n}  \int G^2_s {\bf 1}_{| G_s| \leq \epsilon_n}  \overline{N}(ds,dz) \right)^{1/ 2}.
\end{align*}
The first expectation is bounded from Proposition \ref{Th:Moments}, consequently
\begin{align} \label{eq:Tn2}
\E |T_n^2|  \leq C \E^{1/2} \int_0^1  \int G^2_s {\bf 1}_{| G_s| \leq \epsilon_n}  \overline{N}(ds,dz).
\end{align}
As $\epsilon_n$ goes to zero, we conclude $\E |T_n^2|  \to 0$ by dominated convergence and the proof of (ii) is finished.
\end{proof}

\noindent
2. \underline{ Case $n \Delta_n \to + \infty$.}
The proof follows essentially the same lines as in the previous case and we only indicate the main changes. 
The bounds \eqref{E:momneg1} and \eqref{E:momneg2} still hold.
From the ergodic Theorem (if $b_0>0$, the process $(X_t)$ is geometrically ergodic, see \cite{Li-Ma}), we have the convergence in probability 
for $0<p \leq 1$
\[
\frac{1}{n  \Delta_n} \int_0^{n \Delta_n}
 X_{t}^p dt\xrightarrow[n \to \infty]{} \int_0^{+ \infty} x^p \pi_{0}(dx),
\]
since $\int_0^{+ \infty} x \pi_{0}(dx) < + \infty$.  Moreover we know that $\E(X_t)  \rightarrow a_0/b_0$ (see \cite{BarczyTransformeeLaplace}), then $\sup_t \E(X_t) < + \infty$ and combining this with Proposition \ref{Th:Moments} (iv) we can prove
$$
\E |\frac{1}{n } \sum_{i=0}^{\lfloor \frac{n}{2} \rfloor - 1} X_{2i \Delta_n}^p -\frac{1}{2n  \Delta_n} \int_0^{n \Delta_n}
 X_{t}^p dt |
 \xrightarrow[n \to \infty]{} 0.
$$
We deduce the convergence in probability 
\begin{equation} \label{E:ergo}
\frac{1}{n } \sum_{i=0}^{\lfloor \frac{n}{2} \rfloor - 1} X_{2i \Delta_n}^p  \xrightarrow[n \to \infty]{} \frac{1}{2}\int_0^{+ \infty} x^p \pi_{0}(dx).
\end{equation}
Now replacing $\rho_{2i}^{n}$ by $\overline{\rho}_{2i}^{n}$, we obtain \eqref{E:app} under the additional assumption $\sqrt{n} \Delta_n^{1/ \alpha + \alpha/4 -1/2-\epsilon_0} \rightarrow 0$. It remains to prove the convergence in distribution of 
$$
A_n(\theta_0) \frac{1}{n} \sum_{i=0}^{\lfloor \frac{n}{2} \rfloor - 1} [ f( \overline{\rho}_{2i}^n) 
- P_n f(X_{2i \Delta_n}, \theta_0) ].
$$
We check  \eqref{eq:point1cvTCL}, \eqref{eq:point2cvTCL}, \eqref{eq:point3cvTCL} (with $\overline{\Sigma}$) by proceeding as in the case $n \Delta_n$ fixed but using 
\eqref{E:ergo} and $\sup_t \E(X_t) < + \infty$ to conclude.



\subsubsection{Proof of Proposition \ref{Th:LFGN}}
By definition of $F_n(\theta)$, we have 
$$
A_n(\theta)\nabla_{\theta} F_n(\theta)  \Lambda_n(\theta)=- \frac{1}{n} \sum_{i=0}^{\lfloor n/2 \rfloor - 1}A_n(\theta) \nabla_{\theta}P_nf (X_{2i \Delta_n}, \theta) \Lambda_n(\theta).
$$
We start by computing 
$$
\nabla_{\theta} P_n f(x, \theta)= \left(
\begin{array}{ccc}
 \partial_{\sigma^2} P_n f_1 (x, \theta)& \partial_{\delta} P_n f_1(x, \theta)&  \partial_{\alpha} P_n f_1(x, \theta)\\
  \partial_{\sigma^2} P_n f_2 (x, \theta)& \partial_{\delta} P_n f_2(x, \theta)&  \partial_{\alpha} P_n f_2(x, \theta)\\
   \partial_{\sigma^2} P_n f_3 (x, \theta)& \partial_{\delta} P_n f_3(x, \theta)&  \partial_{\alpha} P_n f_3(x, \theta)
\end{array}
\right).
$$
As $P_n f_1(x, \theta) = \exp(-u_n^2 \sigma^2 -2 \delta x^{1-\alpha/2} u_n^{\alpha} \Delta_n^{1 - \alpha/2})$, we have
\begin{align} \label{eq:gradientcos}
    \partial_{\sigma^2} P_n f_1(x, \theta) & = -  u_n^2 P_n f_1(x, \theta), \nonumber \\
    \partial_{\delta} P_n f_1(x, \theta) & = -2 x^{1-\alpha/2} u_n^{\alpha} \Delta_n^{1 - \alpha/2}P_n f_1(x, \theta), \\
    \partial_{\alpha} P_n f_1(x, \theta) & = -2 \delta x^{1-\alpha/2} u_n^{\alpha} \Delta_n^{1 - \alpha/2} \ln( u_n/ \sqrt{x \Delta_n})P_n f_1(x, \theta). \nonumber
 \end{align}
 Next observing that  $\nabla_{\theta} P_n f_2(x, \theta) =- \nabla_{\theta} P_n K(x, \theta)$ (the same connection holds for $f_3$ replacing $K(x)$ by $K(2x)$), we evaluate $\nabla_{\theta} P_n K(x, \theta)$. We recall that from \eqref{eq:defPnf}
 $$
 P_n K(x, \theta)= \E  K(\sqrt{2} u_n \sigma B_1 + \frac{(2 \delta x)^{1/\alpha}}{\sqrt{x}} u_n \Delta_n^{1/\alpha - 1/2} S^{\alpha}_1),
 $$
 and we denote by $g_{n,\theta, x}$ the density of 
 $\sqrt{2} u_n \sigma B_1 + \frac{(2 \delta x)^{1/\alpha}}{\sqrt{x}} u_n \Delta_n^{1/\alpha - 1/2} S^{\alpha}_1$.
 We obtain the following result where $\F f$ is the Fourier transform of $f$, defined by $ \F f(y) = \int_{\R} e^{-iyz}f(z)dz$. 
 \begin{lem} \label{L:gradientK}
 Let $K$ be defined by \eqref{eq:defK}. Then we have
 \begin{align*}
  \partial_{\sigma^2}  P_n K (x, \theta) & = u_n^2 P_n K^{\prime \prime} (x, \theta), \\
  \partial_{\delta}  P_n K(x, \theta)  &= - x^{1-\alpha/2} u_n^{\alpha} \Delta_n^{1 - \alpha/2} \frac{2}{2 \pi} \int \F K(y) |y|^{\alpha}\F g_{n,\theta, x}(y) dy, \\
  \partial_{\alpha}  P_n K(x, \theta) & = - \delta x^{1-\alpha/2} u_n^{\alpha} \Delta_n^{1 - \alpha/2} \left(
  \frac{2}{2 \pi} \int \F K(y) |y|^{\alpha} \ln(|y|)\F g_{n,\theta, x}(y) dy \right. \\
& \left. +(\ln(\frac{u_n}{\sqrt{\Delta_n}})   -\ln(\sqrt{x}))\frac{2}{2 \pi} \int \F K(y) |y|^{\alpha}\F g_{n,\theta, x}(y) dy \right).
\end{align*}
 \end{lem}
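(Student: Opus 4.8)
The plan is to reduce everything to a single Fourier representation of $P_n K$ and then differentiate under the integral sign. First I would identify $\F g_{n,\theta,x}$ explicitly. Since $B_1$ and $S_1^\alpha$ are independent with $\E e^{izB_1}=e^{-z^2/2}$ and $\E e^{izS_1^\alpha}=e^{-|z|^\alpha}$, the variable $Y=\sqrt{2}\,u_n\sigma B_1+\frac{(2\delta x)^{1/\alpha}}{\sqrt{x}}u_n\Delta_n^{1/\alpha-1/2}S_1^\alpha$ is symmetric with $\F g_{n,\theta,x}(y)=\exp\!\left(-u_n^2\sigma^2y^2-2\delta x^{1-\alpha/2}u_n^\alpha\Delta_n^{1-\alpha/2}|y|^\alpha\right)$. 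I would set $c=c(\delta,\alpha,x)=2\delta x^{1-\alpha/2}u_n^\alpha\Delta_n^{1-\alpha/2}$, so that $\F g_{n,\theta,x}(y)=e^{-u_n^2\sigma^2y^2-c|y|^\alpha}$.

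Next I would establish the representation $P_nK(x,\theta)=\frac{1}{2\pi}\int \F K(y)\,\F g_{n,\theta,x}(y)\,dy$. The key observation is that $K$ defined by \eqref{eq:defK} is $\mathcal{C}^\infty$ and supported in $[-2,2]$ (it equals $0$ for $|x|\ge2$), so $\F K$ is rapidly decreasing; moreover $\F g_{n,\theta,x}$ is integrable thanks to the Gaussian factor $e^{-u_n^2\sigma^2y^2}$. Writing $P_nK=\int K(z)g_{n,\theta,x}(z)\,dz$, inserting the Fourier inversion of $g_{n,\theta,x}$ and applying Fubini (legitimate since $|\F g_{n,\theta,x}|\in L^1$ and $K$ is bounded with compact support), together with the evenness of $K$ (which gives $\F K(-y)=\F K(y)$), yields the stated identity.

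I would then differentiate this representation under the integral with respect to each parameter. For $\sigma^2$, since $\partial_{\sigma^2}\F g_{n,\theta,x}(y)=-u_n^2y^2\F g_{n,\theta,x}(y)$, I use the standard identity $-y^2\F K(y)=\F(K'')(y)$ (two integrations by parts, valid as $K\in\mathcal{C}^\infty_c$) and re-apply the representation in reverse to recover $\partial_{\sigma^2}P_nK=u_n^2\,\frac{1}{2\pi}\int \F(K'')\,\F g_{n,\theta,x}\,dy=u_n^2P_nK''(x,\theta)$. For $\delta$, only $c$ depends on $\delta$ with $\partial_\delta c=c/\delta$, so $\partial_\delta\F g_{n,\theta,x}(y)=-2x^{1-\alpha/2}u_n^\alpha\Delta_n^{1-\alpha/2}|y|^\alpha\F g_{n,\theta,x}(y)$, which gives the second formula after factoring constants out of the integral.

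The only genuinely new computation is $\partial_\alpha$, where $\alpha$ enters both through the exponent $|y|^\alpha$ and through $c$. Writing $c|y|^\alpha=2\delta\exp\!\big((1-\tfrac{\alpha}{2})\ln(x\Delta_n)+\alpha\ln u_n+\alpha\ln|y|\big)$ and differentiating gives $\partial_\alpha(c|y|^\alpha)=c|y|^\alpha\big(\ln(u_n/\sqrt{\Delta_n})-\ln\sqrt{x}+\ln|y|\big)$; substituting, separating the $\ln|y|$ term from the $y$-independent logarithmic factor, and pulling out $c=2\delta x^{1-\alpha/2}u_n^\alpha\Delta_n^{1-\alpha/2}$ produces exactly the claimed expression. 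The main (though routine) obstacle throughout is justifying differentiation under the integral: the derivative factors $y^2$, $|y|^\alpha$ and $|y|^\alpha\ln|y|$ grow at most polynomially in $|y|$ and remain integrable near the origin since $\alpha>0$ removes the logarithmic singularity, so they are dominated by the rapidly decreasing $\F K$ uniformly on any compact subset $A$ of $\Theta$, and dominated convergence applies.
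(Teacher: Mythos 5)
Your proposal is correct and follows essentially the same route as the paper: pass to the Fourier side (your Fourier-inversion-plus-Fubini step is just the duality identity $\int (\F\F K)\,g = \int \F K\,\F g$ that the paper invokes, using $\F\F K = 2\pi K$ for the even, compactly supported $K$), use the explicit form $\F g_{n,\theta,x}(y)=\exp(-u_n^2\sigma^2 y^2 - 2\delta x^{1-\alpha/2}u_n^{\alpha}\Delta_n^{1-\alpha/2}|y|^{\alpha})$, differentiate under the integral by domination, and convert the $-u_n^2 y^2$ factor back via $-y^2\F K = \F K''$. Your parameter-by-parameter computations, including the logarithmic bookkeeping for $\partial_\alpha$, match the paper's, and your justification of differentiation under the integral sign is if anything more explicit than the paper's appeal to Lebesgue's theorem.
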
 
 \begin{proof}
 Since $K$ is compactly supported and even, its Fourier transform is a Schwartz function and $\F \F K = 2 \pi K$. We also have $-y^2 \F K (y) = \F K^{\prime \prime} (y)$. With the previous notation it yields 
\begin{align*}
    \nabla_{\theta} P_n K (x, \theta) 
    & =  \nabla_{\theta} \int_{\R} K(y) g_{n,\theta, x}(y) dy 
    = \frac{1}{2\pi} \nabla_{\theta} \int_{\R} ( \F \F K) (y) g_{n,\theta, x}(y) dy \\
    = & \frac{1}{2\pi} \nabla_{\theta} \int_{\R} \F K (y) \F g_{n,\theta, x}(y) dy 
    =  \frac{1}{2\pi} \int_{\R} \F K (y) \nabla_{\theta} \F g_{n,\theta, x}(y) dy,
\end{align*}
where we successively used duality and Lebesgue's Theorem. By definition of $g_{n,\theta, x}$, we have
\begin{equation} \label{eq:fouriergn}
\F g_{n,\theta, x}(y)= \exp(-u_n^2 \sigma^2y^2 -2 |y|^{\alpha} \delta x^{1-\alpha/2} u_n^{\alpha} \Delta_n^{1 - \alpha/2}).
\end{equation}
The computation of $\nabla_{\theta} \F g_{n,\theta, x}$ is straightforward and the expression of $\nabla_{\theta} P_n K $ follows. 
\end{proof}
 Coming back to $\nabla_{\theta} P_n f_2(x, \theta)$ we obtain the following result.
 \begin{lem} \label{L:gradientf}
We assume that $u_n [\ln(1/ \Delta_n)]^{1/2} \to 0$. Then we have $\forall x>0$
 \begin{align*}
 (i) &\;  \left|  \frac{1}{u_n^{\alpha/2} \Delta_n^{1/2- \alpha/4}}  \partial_{\sigma^2}  P_n f_2(x, \theta) \right| \leq  \varepsilon_n^1(x, \theta), \\
(ii)& \; \frac{1}{u_n^{\alpha} \Delta_n^{1- \alpha/2}}  \partial_{\delta}  P_n f_2(x, \theta) =  x^{1- \alpha/2} \psi(\alpha) + x^{1- \alpha/2} \varepsilon_n^2(x, \theta), \\
(iii) & \frac{1}{u_n^{\alpha} \Delta_n^{1- \alpha/2}}  \left(\partial_{\alpha} P_n f_2(x, \theta) -  \delta \ln(\frac{u_n}{\sqrt{\Delta_n}})\partial_{\delta} P_n f_2(x, \theta)\right) 
=  \delta x^{1- \alpha/2} [ \partial_{\alpha} \psi(\alpha)-\ln(\sqrt{x}) \psi(\alpha)] \\
& \quad \quad  +x^{1- \alpha/2}  \varepsilon_n^3(x, \theta)-  x^{1- \alpha/2} \ln(\sqrt{x}) \varepsilon_n^2(x, \theta),
  \end{align*}
 where $\psi(\alpha)=c_{\alpha} \int \frac{f_2(y)}{|y|^{\alpha+1}} dy=c_{\alpha} \int \frac{1-K(y)}{|y|^{\alpha+1}} dy$ and for any compact subset $A \subset \Theta$ 
  $$
 \sup_{\theta \in A} ( |\varepsilon_n^1(x, \theta)|+|\varepsilon_n^2(x, \theta)|+|\varepsilon_n^3(x, \theta)|) \leq C u_n (1+ \sqrt{x}).
  $$
 The same result holds for $f_3$ replacing $\psi(\alpha)$ by $2^{\alpha} \psi(\alpha)$.
  \end{lem}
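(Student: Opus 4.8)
The plan is to start from the three explicit gradient formulas of Lemma~\ref{L:gradientK}, recall that $\nabla_\theta P_n f_2 = -\nabla_\theta P_n K$ (and likewise for $f_3$ with $K(2\cdot)$), and in each entry isolate a main term obtained by replacing $\F g_{n,\theta,x}(y)$ by its pointwise limit $1$, the error being controlled through the rapid (Schwartz) decay of $\F K$ together with the elementary bound
\[
|\F g_{n,\theta,x}(y) - 1| \le u_n^2 \sigma^2 y^2 + 2\delta\, x^{1-\alpha/2} u_n^{\alpha}\Delta_n^{1-\alpha/2}|y|^{\alpha},
\]
which follows from $|e^{-t}-1|\le t$ applied to \eqref{eq:fouriergn}. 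Throughout I will use the two uniform facts that $x^{1-\alpha/2}\le 1+\sqrt{x}$ (since $1-\alpha/2\in(0,1/2)$) and that $u_n^{\alpha-1}\Delta_n^{1-\alpha/2}$ stays bounded; combined they turn the second contribution above into a term of the required size $C u_n(1+\sqrt{x})$, uniformly for $\theta$ in the compact set $A$.

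For (ii), the formula for $\partial_\delta P_n K$ and division by $u_n^{\alpha}\Delta_n^{1-\alpha/2}$ reduce the claim to showing that $\frac{1}{\pi}\int \F K(y)|y|^{\alpha}\F g_{n,\theta,x}(y)\,dy = \psi(\alpha) + \varepsilon_n^2(x,\theta)$. The crucial point is the limiting identity
\[
\frac{1}{\pi}\int \F K(y)\,|y|^{\alpha}\,dy = c_\alpha\int \frac{1-K(v)}{|v|^{\alpha+1}}\,dv = \psi(\alpha),
\]
which is the Fourier representation of the symmetric $\alpha$-stable generator at $0$: since $|y|^{\alpha}$ is the symbol of the generator and $K\equiv 1$ near $0$, duality gives $\frac{1}{2\pi}\int \F K\,|y|^{\alpha}\,dy = A_\alpha\int \frac{K(0)-K(v)}{|v|^{\alpha+1}}\,dv$ for the Lévy normalisation $A_\alpha$ of $S_1^\alpha$, and one checks that $2A_\alpha=c_\alpha$ with $c_\alpha$ given by \eqref{eq:c-alpha} (the factor $2$ accounting for the evenness of $K$). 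The remaining error $\varepsilon_n^2 = \frac{1}{\pi}\int \F K|y|^{\alpha}(\F g_{n,\theta,x}-1)\,dy$ is bounded by the displayed inequality and the convergence of $\int|\F K||y|^{\alpha+2}dy$ and $\int|\F K||y|^{2\alpha}dy$, giving $|\varepsilon_n^2|\le C(u_n^2 + x^{1-\alpha/2}u_n^{\alpha}\Delta_n^{1-\alpha/2})\le Cu_n(1+\sqrt{x})$.

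For (iii), I would differentiate the previous identity in $\alpha$ under the integral sign (legitimate since $\F K$ is Schwartz and $\int|\F K||y|^{\alpha}|\ln|y||\,dy<\infty$), obtaining $\partial_\alpha\psi(\alpha)=\frac{1}{\pi}\int \F K(y)|y|^{\alpha}\ln|y|\,dy$. Forming the combination $\partial_\alpha P_n f_2 - \delta\ln(u_n/\sqrt{\Delta_n})\partial_\delta P_n f_2$ from Lemma~\ref{L:gradientK} makes the $\ln(u_n/\sqrt{\Delta_n})$ contributions cancel exactly, leaving, after division by $u_n^{\alpha}\Delta_n^{1-\alpha/2}$,
\[
\delta x^{1-\alpha/2}\Big[\frac{1}{\pi}\int \F K|y|^{\alpha}\ln|y|\,\F g_{n,\theta,x}\,dy - \ln\sqrt{x}\,\frac{1}{\pi}\int \F K|y|^{\alpha}\,\F g_{n,\theta,x}\,dy\Big].
\]
Substituting the expansion from (ii) into the second bracket yields the main term $-\delta x^{1-\alpha/2}\ln\sqrt{x}\,\psi(\alpha)$ and the error $-x^{1-\alpha/2}\ln\sqrt{x}\,\varepsilon_n^2$ (absorbing the bounded factor $\delta$), while replacing the first bracket by $\partial_\alpha\psi(\alpha)$ produces $\delta x^{1-\alpha/2}\partial_\alpha\psi(\alpha)$ plus $x^{1-\alpha/2}\varepsilon_n^3$, with $\varepsilon_n^3$ bounded exactly as $\varepsilon_n^2$ but with the harmless extra weight $|\ln|y||$ in the integrals.

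Finally, (i) is the delicate point and the main obstacle. Here $\partial_{\sigma^2}P_n f_2 = -u_n^2 P_n K''$, and the prefactor $1/(u_n^{\alpha/2}\Delta_n^{1/2-\alpha/4})$ carries the \emph{diverging} factor $\Delta_n^{\alpha/4-1/2}$; consequently the crude bound $|P_n K''|\le Cu_n^2$ would blow up, and one genuinely needs the sharper $|P_n K''|\le C x^{1-\alpha/2}u_n^{\alpha}\Delta_n^{1-\alpha/2}$. This improvement comes from the cancellation encoded in $K\equiv1$ near $0$: writing $P_n K''=\frac{1}{2\pi}\int \F K''(y)\F g_{n,\theta,x}(y)\,dy$ with $\F K''=-y^2\F K$ and keeping the Gaussian factor intact, the part of $\F g_{n,\theta,x}$ equal to $e^{-u_n^2\sigma^2 y^2}$ integrates against $\F K''$ to $\E K''(\sqrt{2}\,u_n\sigma Z)$, which is exponentially small since $K''$ vanishes near $0$ and $u_n\to0$ (the hypothesis $u_n[\ln(1/\Delta_n)]^{1/2}\to0$ makes $e^{-c/u_n^2}\le\Delta_n^{c/\overline{u}_n}$ negligible against the prefactor), so that only $e^{-b_n|y|^{\alpha}}-1$ contributes, at order $b_n=2\delta x^{1-\alpha/2}u_n^{\alpha}\Delta_n^{1-\alpha/2}$; equivalently one applies Lemma~\ref{L:expan} directly to the even function $K''$. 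Multiplying by $u_n^2/(u_n^{\alpha/2}\Delta_n^{1/2-\alpha/4})$ and using $x^{1-\alpha/2}\le 1+\sqrt{x}$ then gives $|\varepsilon_n^1|\le Cu_n(1+\sqrt{x})$. The only remaining genuine difficulties are the constant matching $2A_\alpha=c_\alpha$ in the $\psi(\alpha)$ identity and the uniformity in $\theta\in A$ with at most linear growth in $\sqrt{x}$, both handled by the two uniform estimates recorded at the outset.
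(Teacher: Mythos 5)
Your proposal is correct and follows essentially the same route as the paper: invert $\nabla_\theta P_n f_2=-\nabla_\theta P_nK$ via Lemma \ref{L:gradientK}, replace $\F g_{n,\theta,x}$ by $1$ with error $Cu_n(1+y^2)(1+\sqrt{x})$ against the Schwartz decay of $\F K$, identify the limit through the identity $\frac{1}{\pi}\int \F K(y)|y|^{\alpha}dy=\psi(\alpha)$ and its $\alpha$-derivative, exploit the exact cancellation of the $\ln(u_n/\sqrt{\Delta_n})$ terms in (iii), and handle (i) by applying Lemma \ref{L:expan} to $K''$ (using that $K''$ vanishes near $0$). The only cosmetic difference is that you justify the key identity \eqref{eq:Fourier} by the stable generator/symbol duality with the constant matching $2A_\alpha=c_\alpha$, whereas the paper carries out the equivalent explicit Fubini computation with the cosine representation of $|y|^{\alpha}$.
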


\begin{proof}
We first prove that
\begin{equation} \label{eq:Fourier}
\frac{2}{2 \pi} \int \F K(y) |y|^{\alpha} dy =c_{\alpha} \int \frac{(1-K(y))}{|y|^{\alpha+1}} dy= \psi(\alpha).
\end{equation}
 By definition of $c_{\alpha}$ we have
 $
 2 |y|^{\alpha}= c_{\alpha}\int_{\R} \frac{1- \cos(uy) }{|u|^{\alpha+1} } du.
 $
 Then for $\epsilon >0$
 \begin{align*}
 \frac{2}{2 \pi} \int \F K(y) |y|^{\alpha} dy &=  \frac{c_{\alpha}}{2 \pi} \int \F K(y)  \left(\int_{\R} \frac{1- \cos(uy) }{|u|^{\alpha+1} } du\right)dy \\
  & =  \frac{c_{\alpha}}{2 \pi} \int \F K(y)  \left(\int_{\R} \frac{1- \cos(uy) }{|u|^{\alpha+1} } ({\bf 1}_{|u| \leq \epsilon} + {\bf 1}_{|u| > \epsilon}) du\right)dy.
 \end{align*}
 Since $1- \cos(uy)=h(uy)u^2y^2$ with $h$ bounded, we can apply Fubini's Theorem
 \begin{align*}
 \frac{2}{2 \pi} \int \F K(y) |y|^{\alpha} dy 
   = & \frac{c_{\alpha}}{2 \pi}   \int_{\R} \frac{u^2 }{|u|^{\alpha+1} } {\bf 1}_{|u| \leq \epsilon}  \left( \int \F K(y)h(uy) y^2dy\right) du \\
& + \frac{c_{\alpha}}{2 \pi}   \int_{\R} \frac{1 }{|u|^{\alpha+1} } {\bf 1}_{|u| >\epsilon}  \left(  \int \F K(y)(1- \cos(uy))dy\right)du.
 \end{align*}
  Moreover, we have
 $$
 \int \F K(y) dy=2 \pi K(0)=2 \pi , \;\ \int \F K(y) \cos(uy) dy=\F(\F K)(y)=2 \pi K(u).
 $$
 Letting $\epsilon$ go to zero, we obtain \eqref{eq:Fourier}.

Next from \eqref{eq:fouriergn}, we have
$$
\sup_{{\theta} \in A} | \F g_{n,\theta, x}(y) -1| \leq C u_n(1+y^2)(1+ \sqrt{x}),
$$
and consequently
\begin{align*}
 \sup_{\theta \in A} |\frac{2}{2 \pi} \int \F K(y) |y|^{\alpha}\F g_{n,\theta, x}(y) dy - \psi(\alpha)| \leq  C u_n (1+ \sqrt{x}),\\
 \sup_{\theta \in A} |\frac{2}{2 \pi} \int \F K(y) |y|^{\alpha} \ln(|y|) \F g_{n,\theta, x}(y) dy - \partial_{\alpha}\psi(\alpha)| \leq C u_n (1+ \sqrt{x}) .
\end{align*}
Since $f_2=1-K$, we obtain (i) combining Lemma \ref{L:gradientK} with Lemma \ref{L:expan} (applied to $K^{\prime \prime}$), (ii) and (iii) follow from 
Lemma  \ref{L:gradientK}, \eqref{eq:Fourier} and the previous uniform bounds.
 \end{proof}
With this background, we can finish the proof of Proposition \ref{Th:LFGN}. An explicit calculus leads to $A_n(\theta) \nabla_{\theta}P_nf(x, \theta) \Lambda_n(\theta)=W_n(x, \theta)$ with 
$$
W_n=
 \left(
\begin{array}{ccc}
 \frac{1}{u_n^2}\partial_{\sigma^2} P_n f_1 & \frac{v_n}{u_n^2}\partial_{\delta} P_n f_1 &  \frac{v_n}{u_n^2}[\partial_{\alpha} P_n f_1-\delta \ln(\frac{u_n}{\sqrt{\Delta_n}})\partial_{\delta} P_n f_1] \\
v_n  \partial_{\sigma^2} P_n f_2  & v_n^2\partial_{\delta} P_n f_2 &  v_n^2[\partial_{\alpha} P_n f_2 - \delta \ln(\frac{u_n}{\sqrt{\Delta_n}})\partial_{\delta} P_n f_2  ]\\
  v_n \partial_{\sigma^2} P_n f_3 & v_n^2\partial_{\delta} P_n f_3 &  v_n^2[\partial_{\alpha} P_n f_3- \delta \ln(\frac{u_n}{\sqrt{\Delta_n}})\partial_{\delta} P_n f_3] 
\end{array}
\right), \quad v_n=\frac{1}{u_n^{\alpha/2} \Delta_n^{1/2- \alpha/4}}.
$$

\noindent
1. \underline{Case $n \Delta_n=1$}.
We  check the uniform convergences in probability for $\alpha \in [\underline{a}, \overline{a}]$, with $1<\underline{a}< \overline{a}<2$
\begin{equation} \label{E:uni1}
\sup_{\alpha \in [\underline{a}, \overline{a}]} |\frac{1}{n} \sum_{i=0}^{\lfloor n/2  \rfloor - 1} X_{2i \Delta_n}^{1- \alpha/2} -\frac{1}{2} \int_0^1 X_t^{1- \alpha/2} dt | \to 0,
\end{equation}
\begin{equation} \label{E:uni2}
\sup_{\alpha \in [\underline{a}, \overline{a}]} |\frac{1}{n} \sum_{i=0}^{\lfloor n/2 \rfloor - 1} X_{2i \Delta_n}^{1- \alpha/2}\ln\sqrt{X_{2 i \Delta_n}} -\frac{1}{2} \int_0^1 X_t^{1- \alpha/2}\ln \sqrt{X_t} dt | \to 0.
\end{equation}
We just detail \eqref{E:uni2}. Let $h(\alpha,x)=x^{1-\alpha/2} \ln (\sqrt{x})$. We have for $\epsilon >0$, 
$$
\forall x >0, \; |h^{\prime}(\alpha,x)| \leq \frac{1}{x^{\alpha/2}} (1 + |\ln(x)|) \leq C(1+ \frac{1}{x^{\alpha/2 + \epsilon}}),
$$
and from Taylor's formula for $x, y >0$
$$
|h(\alpha,y)-h(\alpha,x)| \leq C |y-x| \int_0^1 (1+ \frac{1}{((1-c)x+cy)^{\alpha/2 + \epsilon}}) dc.
$$
Choosing $\epsilon$ such that $\overline{a}/2 + \epsilon <1$, we deduce
$$
\sup_{\alpha \in [\underline{a}, \overline{a}]} |h(\alpha,y)-h(\alpha,x)| \leq C |y-x| (1+ \frac{1}{x}).
$$
From Proposition \ref{Th:Moments} it yields
$$
\E_{2i} \sup_{t \in [2i \Delta_n,  2(i+1) \Delta_n]}  \sup_{\alpha \in [\underline{a}, \overline{a}]} |h(\alpha,X_t)-h(\alpha,X_{2i \Delta_n})| 
\leq C \sqrt{\Delta_n} (1+ X_{2i \Delta_n} + \frac{1}{ X_{2i \Delta_n}}),
$$
and we obtain \eqref{E:uni2}.

Furthermore, from Lebesgue's Theorem $\alpha \mapsto I(\alpha, X)= \int_0^1 X_t^{1- \alpha/2} dt$ admits a derivative and $\partial_{\alpha} I(\alpha,X)=-\int_0^1 X_t^{1-\alpha/2}\ln(\sqrt{X_t}) dt $, so in the expression of $W(\theta)$ \eqref{E:wtheta}, we have
$$
\partial_{\alpha}(\psi(\alpha)I(\alpha,X))=\partial_{\alpha}\psi(\alpha)\int_0^1 X_t^{1-\alpha/2} dt-\psi(\alpha) \int_0^1 X_t^{1-\alpha/2}\ln(\sqrt{X_t}) dt.  
$$
Consequently, combining \eqref{eq:gradientcos} and Lemma \ref{L:gradientf} with \eqref{E:uni1} and \eqref{E:uni2}, we  obtain
$$
\sup_{\theta \in A} |\frac{1}{n} \sum_{i=0}^{\lfloor n/2 \rfloor - 1} W_n(X_{2i \Delta_n}, \theta) + W(\theta)| \to 0,
$$
and we finally conclude from $A_n(\theta)\nabla_{\theta} F_n(\theta) \Lambda_n(\theta)=- \frac{1}{n} \sum_{i=0}^{\lfloor n/2 \rfloor - 1} W_n(X_{2i \Delta_n}, \theta) $. We see easily that $W$ is non-singular since
$$
\det W( \theta)=\frac{1}{8} \delta \psi(\alpha)^2 I(\alpha,X)^2 2^{\alpha} \ln (2) \neq 0. 
$$

\noindent
2. \underline{Case $n \Delta_n \rightarrow \infty$}.
When $n \Delta_n \to + \infty$, we first prove the convergences
\begin{equation} \label{E:uni1ergo}
\sup_{\alpha \in [\underline{a}, \overline{a}]} |\frac{1}{n} \sum_{i=0}^{\lfloor n/2  \rfloor - 1} X_{2i \Delta_n}^{1- \alpha/2} -\frac{1}{2} \int_0^{+ \infty} x^{1- \alpha/2} \pi_{0}(dx) | \to 0,
\end{equation}
\begin{equation} \label{E:uni2ergo}
\sup_{\alpha \in [\underline{a}, \overline{a}]} |\frac{1}{n} \sum_{i=0}^{\lfloor n/2 \rfloor - 1} X_{2i \Delta_n}^{1- \alpha/2}\ln\sqrt{X_{2 i \Delta_n}} -\frac{1}{2} \int_0^{+ \infty} x^{1- \alpha/2}\ln \sqrt{x} \pi_{0}(dx) | \to 0.
\end{equation}
We give the details for $\eqref{E:uni2ergo}$ and study separately  the uniform convergences in probability
\begin{equation} \label{E:uni2ergoa}
\sup_{\alpha \in [\underline{a}, \overline{a}]} |\frac{1}{n} \sum_{i=0}^{\lfloor n/2 \rfloor - 1} X_{2i \Delta_n}^{1- \alpha/2}\ln\sqrt{X_{2 i \Delta_n}} -\frac{1}{2n \Delta_n} \int_0^{n \Delta_n} X_t^{1- \alpha/2}\ln \sqrt{X_t} dt | \to 0,
\end{equation}
\begin{equation} \label{E:uni2ergob}
\sup_{\alpha \in [\underline{a}, \overline{a}]} | \frac{1}{n \Delta_n} \int_0^{n \Delta_n} X_t^{1- \alpha/2}\ln \sqrt{X_t} dt -\int_0^{+ \infty} x^{1- \alpha/2}\ln \sqrt{x} \pi_{0}(dx)| \to 0.
\end{equation}
The proof of \eqref{E:uni2ergoa} is similar to the one of \eqref{E:uni2} and we omit it. It remains to check \eqref{E:uni2ergob}. We set 
$H_n(\alpha)=\frac{1}{n \Delta_n} \int_0^{n \Delta_n} X_t^{1- \alpha/2}\ln \sqrt{X_t} dt$ and $H(\alpha)=\int_0^{+ \infty} x^{1- \alpha/2}\ln \sqrt{x} \pi_{0}(dx)$. The map $\alpha \to H_n(\alpha)$ is continuous on $[\underline{a}, \overline{a}]$ and $H_n(\alpha) \to H(\alpha)$, $ \forall \alpha \in [\underline{a}, \overline{a}]$. Moreover  for $\alpha_1, \alpha_2 \in [\underline{a}, \overline{a}]$ and $x>0$, $\exists c \in (\alpha_1, \alpha_2 )$ such that
\begin{align*}
| x^{1-\alpha_1/2} \ln(\sqrt{x})-x^{1-\alpha_2/2} \ln(\sqrt{x})|  \leq |\alpha_1- \alpha_2| (\ln(x))^2 x^{1-c/2} 
 \leq C |\alpha_1- \alpha_2| (1+ x).
\end{align*}
With $\sup_t \E X_t < + \infty$, it follows that $\forall \delta>0$, 
$
\sup_n \E \sup_{ |\alpha_1- \alpha_2| \leq \delta} | H_n(\alpha_1)-H_n(\alpha_2)| \leq C \delta,
$ 
so the sequence $(H_n(\alpha)_{\alpha \in [\underline{a}, \overline{a}]})$ is tight and we obtain the uniform convergence \eqref{E:uni2ergob}.

Observing that  $\partial_{\alpha} \overline{I}(\alpha)=-\int_0^{\infty} x^{1-\alpha/2} \ln(\sqrt{x} ) \pi_{0}(dx)$ and using the same arguments as in the case $n \Delta_n$ fixed ,we obtain from \eqref{eq:gradientcos},  Lemma \ref{L:gradientf}, \eqref{E:uni1ergo} and \eqref{E:uni2ergo}
$$
\sup_{\theta \in A} |\frac{1}{n} \sum_{i=0}^{\lfloor n/2 \rfloor - 1} W_n(X_{2i \Delta_n}, \theta) + \overline{W}(\theta)| \to 0.
$$
This achieves the proof of Proposition \ref{Th:LFGN}. 
 
\subsection{Main results}

\subsubsection{Proof of Theorem \ref{Th:Existence} and Theorem \ref{Th:ExistenceTempslong} }

In both cases, $n \Delta_n$ fixed and $n \Delta_n \rightarrow + \infty$, we apply Theorem A.2. in Mies and Podolskij \cite{MiesPodolskij}, that extends the results of Jacod and S{\o}rensen \cite{JacodSorensenUniforme}. We just detail the first case, where we have strengthened condition (E.1) in \cite{MiesPodolskij} by assuming the stable convergence in law.  With our notation, the result of Theorem   \ref{Th:Existence} is a consequence of the following conditions:

(E.1) $A_n(\theta_0) F_n(\theta_0) \xRightarrow[n \to \infty]{\mathcal{L}-s} \Sigma(\theta_0)^{1/2} \mathcal{N}$,

(E.2) $\theta \to F_n(\theta)$ is $\mathcal{C}^1$ and for $r_n$ a sequence of real numbers we have the convergences in probability 
\[ \sup_{\theta \in \mathcal{B}_{r_n}(\theta_0) } 
|| A_n(\theta) \nabla_\theta F_n(\theta) \Lambda_n(\theta) - W(\theta) || \xrightarrow[n \to \infty]{} 0, \;
 \sup_{\theta \in \mathcal{B}_{r_n}(\theta_0) }  \frac{||\Lambda_n(\theta)|| \; ||A_n(\theta) A_n(\theta_0)^{-1}||}{r_n}  \xrightarrow[n \to \infty]{} 0 ,\]
\begin{align*} \text{(E.3)} \; \sup_{\theta \in \mathcal{B}_{r_n}(\theta_0) } 
(|| A_n(\theta) A_n(\theta_0)^{-1} - Id || +  
 || \Lambda_n(\theta) \Lambda_n(\theta_0)^{-1} - Id || + || W(\theta) - W(\theta_0)||) \xrightarrow[n \to \infty]{} 0,
\end{align*}
where $\mathcal{B}_{r_n}(\theta_0)$ is the ball with center $\theta_0$ and radius $r_n$, $|| \cdot ||$ a matrix norm and $Id$ the identity matrix.
Condition (E.1) is proven in Proposition \ref{Th:TCL} and the first condition of (E.2) is proven in Proposition \ref{Th:LFGN}. The second condition in (E.2) and condition (E.3) are easily checked with $r_n = 1/\ln(\Delta_n)^2$.

\subsubsection{Proof of Theorem \ref{Th:Unicité}}
The existence is a consequence of Theorem \ref{Th:Existence}. To prove the global uniqueness result we consider the renormalized estimating function (restricted to $\beta=(\sigma^2, \delta)$ with $f=(f_k)_{1 \leq k \leq2}$)
$$
\tilde{F}_n(\sigma^2, \delta) = \tilde{A}_n(\alpha_0) F_n( \sigma^2, \delta, \alpha_0), \text{with} \;
\tilde{A}_n(\alpha_0)=diag\left(\frac{1}{u_n^2}, \frac{1}{u_n^{\alpha_0} \Delta_n^{1 - \alpha_0/2}}\right).
$$
From Proposition \ref{Th:TCL}, we deduce $\tilde{F}_n(\sigma_0^2, \delta_0) \to 0$. Next, an explicit calculus gives 
\[ \nabla_{\beta} \tilde{F}_n(\sigma^2, \delta) =- \frac{1}{n} \sum_{i=0}^{\lfloor n/2 \rfloor - 1} 
\left(\begin{array}{cc}
\frac{1}{u_n^2} \partial_{\sigma^2} P_n f_1 
& \frac{1}{u_n^2} \partial_{\delta} P_n f_1 \\
\frac{1}{u_n^{\alpha_0} \Delta_n^{1 - \alpha_0/2}} \partial_{\sigma^2} P_n f_2
& \frac{1}{u_n^{\alpha_0} \Delta_n^{1 - \alpha_0/2}} \partial_{\delta} P_n f_2
\end{array}\right),\]
and from Proposition \ref{Th:LFGN}, we have the  convergence in probability for any compact subset A of $(0, + \infty) \times (0, + \infty)$
   \[\sup_{(\sigma^2, \delta) \in A} \left|\left| \nabla \tilde{F}_n(\sigma^2, \delta) - \nabla \tilde{F}(\sigma^2, \delta) \right|\right| \to 0,
   \; \text{where} \;
   \tilde{F}(\sigma^2, \delta) = \left(\begin{array}{c}
\frac{1}{2} (\sigma^2 - \sigma_0^2) \\
\frac{1}{2} \psi(\alpha_0) \int_0^1 X_t^{1-\alpha_0/2} dt (\delta_0 - \delta)
\end{array}\right).
    \]
 The global uniqueness result follows then from Theorem 2.7.a) of Jacod and S{\o}rensen \cite{JacodSorensenUniforme}, since $(\sigma_0^2, \delta_0)$ is the unique root of $\tilde{F}(\sigma^2, \delta)=0$. 

\vspace{0.5cm}
\noindent
{\bf Funding. This work was supported by french ANR reference ANR-21-CE40-0021.}

\bibliographystyle{plain}


\end{document}